\newtheorem{theorem}{Theorem}[section]
\newtheorem{example}[theorem]{Example}
\newtheorem{lemma}[theorem]{Lemma}
\newtheorem{proposition}[theorem]{Proposition}
\newtheorem{problem}[theorem]{Problem}
\newtheorem{corollary}[theorem]{Corollary}
\theoremstyle{definition}
\newtheorem{definition}[theorem]{Definition}
\newtheorem{remark}[theorem]{Remark}
\newcommand{\A}{\mathcal A}
\newcommand{\IR}{\mathbb R}
\newcommand{\IZ}{\mathbb Z}
\newcommand{\Z}{\mathbb Z}
\newcommand{\V}{\mathcal{V}}
\newcommand{\F}{\mathcal{F}}
\newcommand{\diam}{\operatorname{diam}}
\newcommand{\Ra}{\Rightarrow}
\newcommand{\id}{\mathrm{id}}
\newcommand{\w}{\omega}
\newcommand{\IN}{\mathbb N}
\newcommand{\e}{\varepsilon}
\newcommand{\K}{\mathcal K}
\newcommand{\U}{\mathcal U}
\newcommand{\cl}{\mathrm{cl}}
\newcommand{\mn}{\mathfrak{mn}}
\newcommand{\tmn}{\mathfrak{tmn}}
\newcommand{\mV}{\mathcal V}
\newcommand{\N}{\mathbb{N}}
\newcommand{\E}{\mathcal E}
\newcommand{\W}{\mathcal W}
\newcommand{\D}{\mathcal D}
\newcommand{\G}{\mathcal G}
\newcommand{\pr}{\mathrm{pr}}
\newcommand{\St}{\mathcal{S}t}
\newcommand{\dom}{\mathrm{dom}}
\author{Taras Banakh}
\author{Wies\l aw Kubi\'s}
\author{Natalia Novosad}
\author{Magdalena Nowak}
\author{Filip Strobin}
\address{T.Banakh, W.Kubi\'s, M.Nowak, F.Strobin: Institute of Mathematics, Jan Kochanowski University in Kielce, ul. \'Swietokrzyska 15, 25-406 Kielce, Poland}
\email{t.o.banakh@gmail.com, kubis@math.cas.cz, magdalena.nowak805@gmail.com}
\address{N.Novosad: Istitute for Applied Problems of Mechanics and Mathematics of National Academy of Sciences of Ukraine, Naukova 3b, Lviv, Ukraine}
\email{natalia.kasper@gmail.com}
\address{F.Strobin: Institute of Mathematics, \L\'od\'z University of Technology, W\'olcza\'nska 215, 93-005 \L\'od\'z, Poland}
\email{filip.strobin@p.lodz.pl}
\thanks{The research of Magdalena Nowak has been partially supported by NCN grant DEC-2012/07/N/ST1/03551.}
\title{Contractive function systems, their attractors and metrization}
\subjclass[2010]{Primary: 28A80; Secondary: 37C25, 37C70}
\keywords{fractals, attractor, iterated function system, contracting function system, fixed point}
\date{}
\begin{document}

\begin{abstract} { In this paper we study the Hutchinson-Barnsley theory of fractals in the setting of multimetric spaces (which are sets endowed with point separating families of pseudometrics) and in the setting of topological spaces. We find natural connections between these two approaches.}
\end{abstract}
\maketitle

\section{Introduction}

In this paper we consider topological and contracting properties of iterated function systems, well-known in the Theory of Fractals. By an {\em Iterated Function System} on a topological space $X$ we understand a dynamical system on the hyperspace $\K(X)$ of $X$ generated by a finite family $\F$ of continuous self-maps of $X$. In this case the finite family $\F$ will be called a {\em function system} on $X$.

By $\K(X)$ we denote the space of non-empty compact subsets of $X$ endowed with the Vietoris topology.
This topology is generated by the subbase consisting of the sets
$$\{K\in \K(X):K\cap U\ne\emptyset\}\mbox{ \ and \ }\{K\in\K(X):K\subset U\},$$
where $U$ runs over open sets in $X$. If the topology of the space $X$ is generated by a metric $d$, then the Vietoris topology on $\K(X)$ is generated by the {\em Hausdorff metric}
 $$d_H(A,B)=\max\big\{\max_{a\in A}\min_{b\in B}d(a,b),\,\max_{b\in B}\min_{a\in A}d(a,b)\big\}.$$

By $X^X$ we denote the semigroup of all self-maps of $X$, endowed with the operation of composition.
The identity transformation $\id_X$ of $X$ is the (two-sided) unit in the semigroup $X^X$, so $X^X$ is a monoid. For two subsets $\A,\mathcal B\subset X^X$ let $\A\circ\mathcal B=\{\alpha\circ\beta:\alpha\in\mathcal A,\;\beta\in\mathcal B\}$. Each subset $\F\subset X^X$ generates the submonoid $\F^{<\w}=\bigcup_{n\in\w}\F^n$ of $X^X$ where $\F^0=\{\id_X\}$ and $\F^{n+1}=\F\circ\F^n$ for $n\in\w$. For every $k\in\w$ the subset $\F^{\ge k}=\bigcup_{n\ge k}\F^n$ is a subsemigroup of $X^X$.
For each subsets $\F\subset X^X$ and $A\subset X$ let $\F(A)=\bigcup_{f\in \F}f(A)\subset X$.

Any function system $\F\subset X^X$ (i.e., a finite family of continuous self-maps) on a topological space $X$ induces a continuous map $\F:\K(X)\to\K(X)$ assigning to each compact set $K\in\K(X)$ the compact set $\F(K)=\bigcup_{f\in \F}f(K)$ of $X$.

A non-empty compact set $A_\F\in\K(X)$ is called an {\em attractor} of a function system $\F\subset X^X$ if $\F(A_\F)=A_\F$ and for every compact set $K\in\K(X)$ the sequence $\big(\F^n(K)\big)_{n=1}^\infty$ converges to $A_\F$ in the Vietoris topology on $\K(X)$.
In a Hausdorff topological space $X$ a finite system $\F\subset X^X$ of continuous self-maps has at most one attractor.

Observe that a  function system $\F$ on a topological space $X$ has an attractor if and only if the map $\F:\K(X)\to\K(X)$ has an attracting fixed point. We shall say that a self-map $f:X\to X$ of a topological space $X$ has an {\em attracting fixed point} if there is a point $x_\infty\in X$ such that $f(x_\infty)=x_\infty$ and for every point $x\in X$ the sequence $\big(f^n(x)\big)_{n=1}^\infty$  converges to $x_\infty$. This observation makes possible to apply results of Fixed Point Theory to finding attractors of function systems. In fact, the theory of functions systems with attractors has been deeply studied for the last thirty years -- the great impact on this field had the papers \cite{B} of Barnsley and \cite{Hut} of Hutchinson (who exploited the Banach Fixed Point Theorem), and \cite{Ha} of Hata (who applied some other fixed point theorems).

In this paper we shall be interested in detecting function systems possessing an attractor.
A necessary condition for a function system $\F$ to have an attractor is the compact dominacy. We shall say that a function system $\F$ on a topological space $X$ is {\em compact-dominating} if each compact set $C\subset X$ is contained in a compact set $K\subset X$ such that $\F(K)\subset K$. If a function system $\F$ on $X$ has an attractor $A_\F$, then for every compact set $C\subset X$ the set $K=A_\F\cup\bigcup_{n\in\w}\F^n(C)$ is compact, contains $C$ and has the property $\F(K)\subset K$, which means that $\F$ is compact-dominating.

A quite general topological condition, sufficient for the existence of an attractor, was introduced by Mihail \cite{Mih} who defined a function system $\F\subset X^X$ on a topological space $X$ to be {\em topologically contracting} if $\F$ is compact-dominating and for every non-empty compact set $K\subset X$ with $\F(K)\subset K$ and every sequence $(f_n)_{n\in\w}\in\F^\w$ the intersection $\bigcap_{n\in\w}f_0\circ\dots\circ f_n(K)$ is a singleton.

The last condition allows us to define the map $\pi_K:\F^\w\to X$ assigning to each sequence $\vec f=(f_n)_{n\in\w}\in\F^\w$ the unique point of the singleton $\bigcap_{n\in\w}f_0\circ\dots\circ f_n(K)$.
Clearly, the value $\pi_K(\vec f)$ does not depend
on the choice of $K$ {so, in fact, we can define the mapping $\pi=\pi_K:\F^\w\to X$. Also by the compact dominacy, we can easily see that for any compact set $K$, the sequence $(f_0\circ\dots\circ f_n(K))$ converges to $\{\pi(\vec f)\}$ (with respect to the Vietoris topology). It can be shown that the map $\pi:\F^\w\to X$} is continuous with respect to the Tychonoff product topology on the countable power $\F^\w$ of the finite space $\F$ endowed with the discrete topology and the image $A_\F=\pi(\F^\w)$ is an attractor of $\F$. The attractor $\A_\F$ is compact and metrizable (being the continuous image of the compact metrizable space $\F^\w$). These facts were proved by Mihail in \cite{Mih} (cf. also \cite{D} and \cite{K}).

\begin{theorem}[Mihail]\label{Mih} Each topologically contracting function system $\F$ on a Hausdorff topological space $X$ has an attractor $A_\F$, that can be found as  the image of $\F^\w$ under the map {$\pi:\F^\w\to X$}.
\end{theorem}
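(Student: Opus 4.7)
The plan is to construct the attractor directly as $A_\F := \pi(\F^\w)$ and then verify the two defining properties: $\F$-invariance of $A_\F$ and Vietoris-convergence $\F^n(C) \to A_\F$ for every $C \in \K(X)$.

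First I would verify well-definedness and continuity of $\pi$. By compact dominacy, choose a non-empty compact $K\subset X$ with $\F(K) \subset K$; for $\vec f = (f_n)_{n\in\w} \in \F^\w$ the sets $K_n(\vec f) := f_0\circ\cdots\circ f_n(K)$ form a decreasing sequence of non-empty compacta (since $f_{n+1}(K) \subset K$), so topological contractivity forces $\bigcap_{n\in\w}K_n(\vec f)$ to be a singleton $\{\pi(\vec f)\}$. Given an open neighbourhood $U$ of $\pi(\vec f)$, the nested-compact property yields an $N$ with $K_N(\vec f) \subset U$; hence the basic clopen neighbourhood $\{\vec g \in \F^\w : g_i = f_i \text{ for } i \le N\}$ of $\vec f$ is mapped by $\pi$ into $U$. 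Thus $A_\F = \pi(\F^\w)$ is a non-empty compact (and metrizable) subset of $X$, being the continuous image of the compact metrizable Tychonoff power $\F^\w$ of a finite discrete space.

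For invariance $\F(A_\F) = A_\F$ I would use the shift $\sigma \colon \F^\w \to \F^\w$, $\sigma(f_0, f_1, \ldots) = (f_1, f_2, \ldots)$. Applying the Vietoris-continuous map $\K(X) \to \K(X)$ induced by $f_0$ to the convergence $K_n(\sigma(\vec f)) \to \{\pi(\sigma(\vec f))\}$ yields $K_{n+1}(\vec f) = f_0(K_n(\sigma(\vec f))) \to \{f_0(\pi(\sigma(\vec f)))\}$; but this sequence also converges to $\{\pi(\vec f)\}$, whence $\pi(\vec f) = f_0(\pi(\sigma(\vec f)))$. Since prepending any $f \in \F$ to an arbitrary $\vec h \in \F^\w$ produces an element of $\F^\w$, this gives $A_\F = \pi(\F^\w) = \bigcup_{f \in \F} f(\pi(\F^\w)) = \F(A_\F)$.

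Finally, for the attracting property, given $C \in \K(X)$ use compact dominacy to find a compact $K \supset C$ with $\F(K) \subset K$; then $(\F^n(K))_{n\in\w}$ is a decreasing sequence of compacta inside $K$, which always converges in the Vietoris topology to its intersection. The inclusion $A_\F \subset \bigcap_n \F^n(K)$ is immediate from $\pi(\vec f) \in K_n(\vec f) \subset \F^{n+1}(K)$. For the reverse inclusion take $x \in \bigcap_n \F^n(K)$; for every $n$ write $x = f^{(n)}_0\circ\cdots\circ f^{(n)}_{n-1}(y_n)$ with $y_n \in K$, and use the finiteness of $\F$ and a diagonal extraction to produce $\vec f \in \F^\w$ with $x \in f_0\circ\cdots\circ f_{m-1}(K)$ for every $m$, whence $x = \pi(\vec f) \in A_\F$. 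To transfer the Vietoris convergence from $K$ to $C$, the inclusion $\F^n(C) \subset \F^n(K)$ handles the upper half; for the lower half, any open $V$ meeting $A_\F$ contains some $\pi(\vec f)$, and then for any fixed $c \in C$ the points $f_0\circ\cdots\circ f_{n-1}(c)$ lie in $\F^n(C) \cap K_{n-1}(\vec f)$ and are thus eventually forced into $V$. The main obstacle is the diagonal extraction identifying $\bigcap_n \F^n(K)$ with $A_\F$, but finiteness of $\F$ makes it routine.
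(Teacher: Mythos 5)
Your proposal is correct and follows essentially the same route the paper (following Mihail) sketches: define $\pi(\vec f)$ as the singleton intersection $\bigcap_n f_0\circ\dots\circ f_n(K)$, check continuity via the cylinder sets, and show $A_\F=\pi(\F^\w)$ is invariant and attracting using the Vietoris convergence of the nested compacta $f_0\circ\dots\circ f_n(K)$ and of the decreasing sequence $\F^n(K)$. The only detail you gloss over — that $\pi$ does not depend on the chosen invariant compact $K$ (needed when you switch to a $K\supset C$) — is the same point the paper dismisses as clear, and it follows at once by comparing with $K\cup K'$.
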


A vast source of topologically contracting function systems is given by function systems consisting of contracting maps on metric or multimetric spaces. We shall be interested in six types of contracting maps on metric spaces. For a map $f:X\to Y$ between two metric spaces $(X,d_X)$ and $(Y,d_Y)$
its {\em oscillation} is the map $\w_f:[0,\infty]\to[0,\infty]$ assigning to each number $\delta\in[0,\infty]$ the number
$$\w_f(\delta)=\sup\big\{d_Y(f(x),f(x')):x,x'\in X,\;d_X(x,x')\le \delta\big\}\in[0,\infty].$$
For a natural number $n\in\IN$ by $\w^n_f$ we denote the $n$th iteration of $\w_f$ and by $\w_{f^n}$ the oscillation of the $n$th iteration of $f$.

A self-map $f:X\to X$ defined on a metric space $(X,d)$ is called
\begin{itemize}
\item {\em Banach contracting} if $\sup_{0<t<\infty}\w_f(t)/t<1$;
\item {\em Rakotch contracting} if $\sup_{a\le t<\infty}\w_f(t)/t<1$ for every $a>0$;
\item {\em Krasnoselski\u\i{} contracting} if $\sup_{a\le t\le b}\w_f(t)/t<1$ for every $0<a<b<\infty$;
\item {\em Matkowski contracting} if $\lim_{n\to\infty}\w^n_f(t)=0$ for all $t>0$;
\item {\em eventually contracting} if $\lim_{n\to\infty}\w_{f^n}(t)=0$ for all $t>0$;
\item {\em Edelstein contracting} if $d(f(x),f(y))<d(x,y)$ for any distinct points $x,y\in X$.
\end{itemize}
These and many other contracting conditions (expect for the eventual contractivity, which seems to be new)
are discussed in \cite{Jah} where the following implications are proved (except for ``Matkowski $\Ra$ Eventual'' which follows from $\w_{f^n}\leq\w_f^n$):
\smallskip

\centerline{Banach $\Ra$ Rakotch $\Ra$ Krasnoselski\u\i{} $\Ra$ Matkowski $\Ra$ Edelstein \&\ Eventual.}
\medskip

Moreover, for a self-map $f:X\to X$ of a bounded (compact) metric space $X$ the Krasnoselski\u\i{} (Edelstein) contractivity is equivalent to the Rakotch contractivity.

A standard application of the Banach Contraction Principle shows that each function system $\F\subset X^X$ consisting of Banach contracting maps on a complete metric space is topologically contracting and has an attractor. The same conclusion holds for function systems consisting of Matkowski or eventual contractions. This can be proved using the Matkowski or Leader Fixed Point Theorems \cite{Mat} or \cite{Led83}. The example of the Edelstein contracting map $f:[0,\infty)\to[0,\infty)$, $f(x)=x+e^{-x}$, shows that this result cannot be further generalized to function systems consisting of Edelstein contracting maps. On the other hand, in Theorem~\ref{t4.7} we shall show that a necessary and sufficient condition for a Edelstein contracting function system $\F$ to have an attractor is the compact-dominacy {(in fact, the known fixed point theorem due to Edelstein \cite{E} states that any Edelstein contraction on a compact metric space has an attracting fixed point)}.

The present investigation was motivated by the  problem of detecting topologically contracting function systems $\F$ on a topological space $X$ which are Banach (or Rakotch, Krasnoselski\u\i{}, Matkowski, Edelstein) contracting with respect to a metric (or a family of pseudometrics) generating the topology of $X$.
The ``Banach contracting'' case of this problem was considered in \cite{K}. One of our principal results is Theorem~\ref{t9.2} saying that each topologically contracting function system $\F$ on a (completely) metrizable space $X$ is Edelstein contracting with respect to some (complete) metric $d$ generating
the topology of $X$. Another main result is Theorem~\ref{t6.12n} saying that a function system $\F$ on a topological space $X$ is Krasnoselski\u\i{} contracting with respect to a suitable admissible (complete) metric if and only if $\F$ is Matkowski contracting with respect to a suitable admissible (complete) metric on $X$ if and only if $\F$ is eventually contracting with respect to a suitable admissible (complete) metric on $X$.

In fact, these metrization theorems can be generalized also to topologically contracting function systems on non-metrizable topological spaces. This will be done with help of the notion of a multimetric space, which is a set $X$ endowed with the family of pseudometrics $\mathcal D$ generating a Hausdorff topology on $X$. Multimetric spaces and their hyperspaces will be considered in Section~\ref{s:multimet}. In Sections~\ref{s3} and \ref{s4} we discuss various contractivity conditions for functions and function systems on multimetric spaces. In particular, in Section~\ref{s4} we shall prove one of the principal results of this paper saying that each eventually contracting function system on a sequentially complete multimetric space $(X,\mathcal D)$ is topologically contracting and hence has an attractor. From this theorem we shall derive that each compact-dominating Edelstein contracting function system is topologically contracting and has an attractor. The last Section~\ref{s9} contains metrization results for various sorts of contractive functions systems.


\section{Contracting function systems on topological spaces}

In this section we discuss various sorts of topological contractive properties of function systems on topological spaces. Given two families $\U,\V$ of sets, we shall write $\U\prec\V$ if each set $U\in\U$ is contained in some set $V\in\V$.
{
\begin{definition} Let $\F\subset X^X$ be a function system on a topological space $X$ such that $\F(C)\subset C$ for some non-empty compact subset $C\subset X$. The function system $\F$ is called
\begin{enumerate}
\item {\em compactly contracting} if for every compact subset $K\subset X$ and every open cover $\U$ on $X$ there is $n\in\w$ such that $\{f(K):f\in\F^{\ge n}\}\prec\U$;
\item {\em locally contracting} if for every compact subset $K\subset X$ and every open cover $\U$ on $X$ there are a neighborhood $O_K\subset X$ of the set $K$, and a number $n\in\w$ such that $\{f(O_K):f\in\F^{\ge n}\}\prec\U$;
\item {\em totally contracting} if for every compact subset $K\subset X$ there is a neighborhood $O_K\subset X$ of the set $K$, such that for every open cover $\U$ on $X$ there is a number $n\in\w$ such that $\{f(O_K):f\in\F^{\ge n}\}\prec\U$;
\item {\em globally contracting} if for every open cover $\U$ on $X$ there is a number $n\in\w$ such that $\{f(X):f\in\F^{\ge n}\}\prec\U$.
\end{enumerate}
In the sequel saying that some function system $\F\subset X^X$ is compactly (resp. locally, totally, globally) contracting, we shall always assume that $\F(C)\subset C$ for some nonempty compact subset $C\subset X$.
\end{definition}
}
These notions relate as follows:
\smallskip

\centerline{globally contracting $\Ra$ totally contracting $\Ra$ locally contracting $\Ra$ compactly contracting.}
\medskip

Moreover, for a function system $\F\subset X^X$ on a compact Hausdorff space $X$ all these contractivity properties are equivalent.

It turns out that the topological contractivity of function systems is equivalent to the compact contractivity. By $cl_X(A)$ we denote the closure of a set $A$ in a topological space $X$.

\begin{theorem}\label{t7.3} For a function system $\F\subset X^X$ on a Hausdorff space $X$, the following conditions are equivalent:
\begin{enumerate}
\item $\F$ is topologically contracting;
\item $\F$ is compactly contracting;
\item for every set $K\in \K(X)$ and every open cover $\U$ of $X$ there is $k\in\IN$ such that $$\{cl_X(f\circ\F^{<\omega}(K)):f\in\F^k\}\prec\U.$$
\end{enumerate}
Moreover, if $X$ is a regular space, then \textup{(1)--(3)} are equivalent to:
\begin{itemize}
\item[$(4)$] for every set $K\in \K(X)$ and every open cover $\U$ of $X$ there is $k\in\IN$ such that $$\{f\circ\F^{<\omega}(K):f\in\F^k\}\prec\U.$$
\end{itemize}
\end{theorem}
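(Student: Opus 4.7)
The plan is to establish the cycle $(1) \Rightarrow (3) \Rightarrow (2) \Rightarrow (1)$ and handle $(3) \Leftrightarrow (4)$ under regularity separately. Two standing facts are used throughout: a decreasing sequence of nonempty compact sets in a Hausdorff space Vietoris-converges to its intersection, so that if the intersection is a singleton $\{x\}$ then every open $U \ni x$ eventually contains the whole sequence; and since $\F$ is finite, the tree $\F^{<\w}$ is finitely branching, making K\"onig's lemma applicable.

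For $(1) \Rightarrow (3)$ I would first prove the auxiliary $(1) \Rightarrow (2)$. Given a compact $K$ and open cover $\U$, compact-dominacy furnishes an invariant compact $L \supset K \cup C$. If (2) fails, arbitrarily long $f = g_1 \circ \cdots \circ g_m \in \F^{\ge n}$ have $f(K) \not\subset U$ for any $U \in \U$; writing $f(K) \subset g_1 \circ \cdots \circ g_n(L)$, the length-$n$ prefixes populate every level of an infinite finitely branching tree in $\F$, and K\"onig's lemma produces $(h_i) \in \F^\w$ with $h_1 \circ \cdots \circ h_n(L) \not\subset U$ for every $U \in \U$ and every $n$. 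The singleton condition forces $\bigcap_n h_1 \circ \cdots \circ h_n(L) = \{x\}$, and Vietoris convergence to $\{x\}$ contradicts the non-subordination once one picks $U \in \U$ with $x \in U$. To deduce (3), apply (2) to $L$ to obtain $k$ with $f(L) \prec \U$ for all $f \in \F^{\ge k}$; for $f \in \F^k$ one has $\F^{<\w}(K) \subset L$, hence $\overline{f(\F^{<\w}(K))} \subset \overline{f(L)} = f(L) \subset U_f$, the closure equality holding because $f(L)$ is a compact subset of the Hausdorff space $X$.

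For $(3) \Rightarrow (2)$, any $g \in \F^{\ge k}$ factors as $f \circ h$ with $f \in \F^k$ and $h \in \F^{<\w}$, yielding $g(K) \subset f(\F^{<\w}(K)) \subset U_f$. For $(2) \Rightarrow (1)$, I first extract the singleton condition from (2) by a Hausdorff separation argument: two distinct points $x, y$ in $\bigcap_n f_0 \circ \cdots \circ f_n(K)$ would be separated by disjoint opens $V_x, V_y$, and (2) applied to $K$ with the cover $\{V_x, V_y, X \setminus \{x, y\}\}$ leaves no admissible target for $f_0 \circ \cdots \circ f_n(K)$. The singleton condition on $C$ then defines the standard map $\pi : \F^\w \to X$, which is continuous with $A := \pi(\F^\w)$ compact and $\F(A) = A$. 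I would then prove $\F^n(K_0) \to A$ in Vietoris for every compact $K_0$, which by the introductory observation (that an attractor yields compact-dominacy via compactness of $A \cup \bigcup_n \F^n(K_0)$) completes (1). The upper semicontinuous half (for $V \supset A$ open, $\F^n(K_0) \subset V$ eventually) uses (2) applied to $K_0 \cup C$ with the cover $\{V, X \setminus A\}$, ruling out the second alternative because $\pi(\vec{f}) \in f(C) \cap A$ for any extension $\vec{f}$ of the factors of $f$; the lower semicontinuous half (for $V$ open meeting $A$ at some point $p$) realizes $p = \pi((g_i))$ and applies (2) to $\{x_0\} \cup C$ for an arbitrary $x_0 \in K_0$ with cover $\{V, X \setminus \{p\}\}$, noting that $p \in g_1 \circ \cdots \circ g_n(C)$ forces $g_1 \circ \cdots \circ g_n(x_0) \in V \cap \F^n(K_0)$.

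The equivalence $(3) \Leftrightarrow (4)$ in a regular space is standard: $(3) \Rightarrow (4)$ is immediate, and conversely regularity lets us refine any open cover $\U$ to an open cover $\V$ with $\overline{V} \subset U_V \in \U$ for each $V \in \V$; then (4) applied to $\V$ promotes containment of $f(\F^{<\w}(K))$ inside $V_f$ to containment of its closure inside $U_{V_f}$. The most delicate step, I expect, is the bookkeeping in $(2) \Rightarrow (1)$ for arbitrary compact $K_0$ not contained in $C$; the recurring trick is to pair $K_0$ with $C$ before invoking (2), exploiting that $f(C)$ always meets $A$ so that the "bad" option in each dichotomy produced by (2) is excluded.
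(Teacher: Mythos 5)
Your cycle $(1)\Ra(3)\Ra(2)\Ra(1)$ has a genuine gap at $(3)\Ra(2)$, and it propagates to $(3)\Ra(1)$. In this paper ``compactly contracting'' is defined under the standing assumption that $\F(C)\subset C$ for some nonempty compact $C\subset X$; that assumption is part of condition (2), and your own proof of $(2)\Ra(1)$ cannot start without it: the map $\pi$ is defined on $C$, and every invocation of (2) you make (on $K_0\cup C$ with the cover $\{V,X\setminus A\}$, on $\{x_0\}\cup C$ with $\{V,X\setminus\{p\}\}$, and the exclusion of the ``bad'' alternative via $\pi(\vec f)\in f(C)\cap A$) uses $C$ essentially -- you even say so in your closing remark. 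But your argument for $(3)\Ra(2)$ is only the factorization $g=f\circ h$, which yields the refinement property and nothing more; nowhere do you show that condition (3) by itself produces a nonempty compact $\F$-invariant set (equivalently, that (3) implies compact-dominacy). So the chain $(3)\Ra(2)\Ra(1)$ never gets off the ground when its input is (3) alone. This missing step is not a formality: it is exactly the nontrivial first half of the paper's proof of $(3)\Ra(1)$, where one shows directly from (3) that $D=\cl_X(\F^{<\w}(K))$ is compact, using the decomposition $D=\F^{<k}(K)\cup\bigcup_{f\in\F^k}\cl_X(f\circ\F^{<\w}(K))$, covering the head by compactness of $\F^{<k}(K)$ and the tail by the sets supplied by (3); then $K\subset D$ and $\F(D)\subset D$ give the required invariant compacta. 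You need this argument (or a substitute) to close your cycle.

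Apart from this, the proposal is sound and mostly parallels the paper, with two legitimate variations: your $(1)\Ra(2)$ via K\"onig's lemma replaces the paper's compactness-of-$\F^\w$ argument (equivalent for finite $\F$), and your $(2)\Ra(1)$ replaces the paper's direct cover-counting proof that $\F^{<\w}(K)$ is compact by constructing $A=\pi(\F^\w)$ and proving Vietoris convergence $\F^n(K_0)\to A$ (essentially re-proving Mihail's theorem) before citing the introductory observation; this works, but only once an invariant compact is in hand, which is precisely what the missing step must supply. The Hausdorff-separation proof of the singleton property and the regularity argument for $(3)\Leftrightarrow(4)$ coincide with the paper's.
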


\begin{proof} To prove that $(1)\Ra(3)$, assume that $\F$ is topologically contracting and fix a compact set $K\in\K(X)$ and an open cover $\U$ of $X$. Since $\F$ is compact-dominating, there is a compact set $D\in\K(X)$ such that $K\subset D$ and $\F(D)\subset D$. By the topological contractivity of $\F$, for every sequence $\vec f=(f_n)_{n\in\w}\in\F^\w$ the intersection $\bigcap_{n\in\w}f_0\circ \dots\circ f_n(D)$ is a singleton contained in some open set $U\in\U$. By the compactness of $D$ there is a number $n=n(\vec f)$ such that $f_0\circ\dots \circ f_{n-1}(D)\subset U$. It follows that the set $V_{\vec f}=\{(g_n)_{n\in\w}\in\F^\w:\forall n<n(\vec f)\;\;g_n=f_n\}$ is an open neighborhood of $\vec f$ in the Tychonoff product topology on $\F^\w$. By the compactness of $\F^\w$, the cover $\{V_{\vec f}:\vec f\in\F^\w\}$ has a finite subcover $\{V_{\vec f}:\vec f\in E\}$ for some finite set $E\subset \F^\w$. Then for the number $n=\max_{\vec f\in E}n(\vec f)$ and each $f\in\F^n$ the set $f(D)$ is contained in some set $U\in\U$. Consequently,
$cl_X(f\circ \F^{<\omega}(K))\subset  cl_X(f(D))=f(D)\subset U.$
\smallskip

$(3)\Ra(1)$ Given any compact set $K\in\K(X)$, we shall prove that the closed subset
$$D:=cl_X(\F^{<\omega}(K))$$ of $X$ is compact. Given an open cover $\U$ of $X$, we need to find a finite subcover of $D$. The condition (3) yields a number $k\in\IN$ such that for every $f\in\F^k$ the set $\cl_X(f\circ \F^{<\w}(K)$) is contained in some set $U_f\in\U$. By the compactness of the set $\F^{<k}(K)=\bigcup_{n<k}\F^n(K)$, there is a finite subcover $\V$ of $\U$ such that $\F^{<k}(K)\subset\bigcup\V$. Then $\V\cup\{U_f:f\in\F^k\}\subset\U$ is a finite cover of the set $D=\cl_X(\F^{<\w}(K))=\F^{<k}(K)\cup\bigcup_{f\in \F^k}\cl_X(f\circ\F^{<\w}(K))$, witnessing that the set $D$ is  compact. Since $K\subset D$ and $\F(D)\subset D$, the function system $\F$ is compact-dominating.

Now take any $D\in\K(X)$ with $\F(D)\subset D$ and a function sequence $(f_n)_{n\in\w}\subset \F^\w$.
By the compactness of $D$ the decreasing sequence $(f_0\circ\dots\circ f_n(D))_{n\in\w}$ has non-empty intersection $K=\bigcap_{n\in\w}f_0\circ\dots\circ f_n(D)$. We need to show that $K$ is a singleton. Assume conversely that $K$ contains two distinct points $x,y$. Since $X$ is Hausdorff, these points have disjoint open neighborhoods $U_x,U_y$ in $X$. By the condition (3), for the open cover $\U=\{U_x,U_y,X\setminus\{x,y\}\}$ there is a number $n\in\w$ such that $\{x,y\}\subset f_0\circ\dots\circ f_n(D)\subset U$ for some $U\in\U$, which contradicts the choice of $\U$.
\smallskip

The equivalent conditions $(1)$ and $(3)$ trivially imply $(2)$.
\smallskip

$(2)\Ra(1)$ First we prove that the function system $\F$ is compact dominating. Take any $K\in\K(X)$. By $(2)$, there is a non-empty compact set $C\subset X$ with $\F(C)\subset C$. Replacing $K$ by $K\cup C$, we can assume that $C\subset K$. Our goal is to prove that the set $\F^{<\w}(K)$ is compact.
Let $\U$ be a cover of $\F^{<\w}(K)$ by open subsets of $X$. Using the compactness of the set $C$, find a finite subcover $\U_C\subset\U$ of $C$. By (3) for the open cover $\tilde\U_C=\U_C\cup\{X\setminus C\}$ of $X$ there is $k\in\w$ such that $\{f(K):f\in\F^{\ge k}\}\prec\tilde\U_C$.
Observe that for every $f\in\F^{\ge k}$ the set $f(K)$ contains the subset $f(C)\subset C$ and hence $f(K)\not\subset X\setminus C\in\widetilde{\U}_C$. This implies that $\U_C$ is a finite cover of the set $\F^{\ge k}(K)$. Since the set $\F^{<k}(K)=\bigcup_{n<k}\F^n(K)$ is compact, we can find a finite subcover $\V\subset\U$ of $\F^{<k}(K)$. Then $\V\cup\U_C\subset \U$ is a finite cover of $\F^{<\w}(K)$, witnessing that the set $\F^{<\w}(K)$ is compact. So, $\F$ is compact-dominating. Repeating the argument from the proof of the implication $(3)\Ra(1)$, we can prove that for every non-empty compact set $K\subset X$ with $\F(K)\subset K$ and every function sequence $(f_n)_{n\in\w}\in\F^\w$ the intersection $\bigcap_{n\in\w}f_0\circ\dots\circ f_n(K)$ is a singleton.
\smallskip

The implication $(3)\Ra(4)$ is trivial. The converse implication holds if the space $X$ is regular. In this case for every open cover $\U$ of $X$ we can find an open cover $\V$ of $X$ such that $\{\cl_X(V):V\in\V\}\prec\U$. By (4), for every compact set $K\in\K(X)$ there is $n\in\w$ such that $\{f\circ\F^{<\w}(K):f\in\F^n\}\prec\V$. Then $$\{\cl_X(f\circ\F^{<\w}(K)):f\in\F^n\}\prec\{\cl_X(V):V\in\V\}\prec\U$$witnessing the condition (3).
\end{proof}

Next, we show that the topological contractivity of function systems on $k$-spaces is equivalent to the  local contractivity. Let us recall that a topological space $X$ is called a {\em $k$-space} if its topology is determined by compact sets in the sense that a subset $U\subset X$ is open if and only if for every compact subset $K\subset X$ the intersection $K\cap U$ is open in the subspace topology on $K$. It is well-known \cite[\S3.3]{Eng} that the class of $k$-spaces includes all locally compact spaces and all first countable spaces.

\begin{theorem}\label{t5.3} A function system $\F$ on a Hausdorff $k$-space $X$ is topologically contracting if and only if it is locally contracting.
\end{theorem}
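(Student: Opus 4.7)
The direction $(\Leftarrow)$ is immediate: by the chain of implications recorded just after the definition of compactly/locally/totally/globally contracting, locally contracting implies compactly contracting, and by Theorem~\ref{t7.3} compactly contracting is equivalent to topologically contracting. So there is nothing to prove in this direction.

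For $(\Rightarrow)$ I would assume $\F$ is topologically contracting on a Hausdorff $k$-space $X$, fix a compact $K\subset X$ and an open cover $\U$ of $X$, and use compact dominacy to pick a compact $\F$-invariant $D\supset K$. The plan is to produce an open neighborhood $O\supset K$ and $N\in\w$ such that $f(O)\subset U_f\in\U$ for every $f\in\F^{\geq N}$. The natural candidate is
$$O:=\bigcap_{f\in\F^{\geq N}}f^{-1}(U_f),$$
for an appropriately chosen assignment $f\mapsto U_f$; the $k$-space hypothesis will be used to prove that this a priori merely $G_\delta$-like set is actually open, by checking that $O\cap C$ is open in $C$ for every compact $C\subset X$.

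The crux is to fix $f\mapsto U_f$ once and for all so that it works uniformly for \emph{every} $\F$-invariant compactum containing the attractor. The attractor $A_\F$ is provided by Theorem~\ref{Mih}, it is compact and $\F$-invariant. Cover it by finitely many $U_1,\dots,U_s\in\U$ and run the Mihail compactness-of-$\F^\w$ argument from the proof of Theorem~\ref{t7.3}, with test set $A_\F$: this yields finitely many prefixes $p_1,\dots,p_r\in\F^{<\w}$ whose cylinders $V_{p_j}:=\{\vec f\in\F^\w:\vec f\text{ extends }p_j\}$ cover $\F^\w$, together with indices $i(j)$ such that $p_j(A_\F)\subset U_{i(j)}$. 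For every $f\in\F^{\geq M}$ (with $M$ the largest length among the $p_j$'s) pick any $j(f)$ with $f$ extending $p_{j(f)}$ and set $U_f:=U_{i(j(f))}$. The key uniformity claim is: for every $\F$-invariant compact $E\supset A_\F$ there is $N_E\geq M$ such that $f(E)\subset U_f$ for all $f\in\F^{\geq N_E}$. This follows from the Vietoris convergence $g_0\circ\cdots\circ g_{n-1}(E)\to\{\pi(\vec g)\}\subset p_j(A_\F)\subset U_{i(j)}$ combined with compactness of each cylinder $V_{p_j}\subset\F^\w$ (the standard finite-subcover argument, constant along the first $n$ coordinates).

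Setting $N:=N_D$ one has $D\subset O$, hence $K\subset O$, and $f(O)\subset U_f$ for $f\in\F^{\geq N}$ by construction. To verify openness of $O$, I would take an arbitrary compact $C\subset X$, use compact dominacy to enlarge $D\cup C$ to a compact invariant $D_C$, and apply the uniformity with $E=D_C$ to get $N_{D_C}\geq N$ such that $f(D_C)\subset U_f$, hence $f^{-1}(U_f)\supset C$, for all $f\in\F^{\geq N_{D_C}}$. The intersection defining $O\cap C$ then collapses to finitely many non-trivial constraints,
$$O\cap C=\bigcap_{N\le n<N_{D_C}}\bigcap_{f\in\F^n}(f^{-1}(U_f)\cap C),$$
a finite intersection of subsets of $C$ open in $C$, hence open in $C$. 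The $k$-space property then yields that $O$ is open in $X$, so $\F$ is locally contracting. The main obstacle I expect is precisely the uniformity of $f\mapsto U_f$: a set-dependent choice of labels would prevent the collapse in the openness check, and the $k$-space argument would fail. Anchoring $U_f$ to a fixed finite cover of the \emph{invariant} attractor $A_\F$ and to the prefix decomposition of $\F^\w$ handed by the Mihail argument is the device that makes the labels robust to enlarging the test set.
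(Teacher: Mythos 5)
Your proposal is correct, and while its skeleton matches the paper's -- the candidate neighborhood is an intersection of preimages $\bigcap f^{-1}(U_f)$ of labels attached to long compositions via their outermost initial segments, and the $k$-space property converts a trace condition on compacta into openness -- the way you verify openness is genuinely different. The paper takes its labels from Theorem~\ref{t7.3}(3) applied to one fixed invariant compactum $K\supset A_\F$ and then argues by contradiction: if $O_K$ were not open, there would be an invariant compactum $C$ and a point $y\in O_K\cap \cl_C(C\setminus O_K)$, and a second application of Theorem~\ref{t7.3} to $C$ with the auxiliary cover $\{V_x:x\in A_\F\}\cup\{X\setminus A_\F\}$, together with continuity of the finitely many short compositions at $y$, yields a relative neighborhood $O_y\subset O_K\cap C$, a contradiction. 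You instead build robustness into the labels at the outset: since $U_f\supset p_{j(f)}(A_\F)\supset f(A_\F)$ and $\pi(\vec g)\in p_j(A_\F)$ for every $\vec g$ in the cylinder over $p_j$, the cylinder-compactness argument shows the \emph{same} assignment $f\mapsto U_f$ works for every invariant compactum $E$, only the threshold $N_E$ grows; hence $O\cap C$ collapses to a finite intersection of relatively open sets and openness is checked directly, with no contradiction and no auxiliary cover. The two devices are close cousins (the paper's sets $V_x$ encode precisely the fact that $f(A_\F)\subset U_f$), but your uniformity claim makes the mechanism explicit and shortens the openness check. Two small adjustments you should make: enlarge $D$ (and each $D_C$) so that it also contains $A_\F$ before invoking the uniformity claim, and index the intersection by words of $\F^{<\w}$ rather than by the composed functions (a single function can lie in several $\F^n$, and the finiteness of the non-vacuous constraints is a count of words); both are routine.
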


\begin{proof}
Assume that $\F$ is topologically contracting. By Theorem~\ref{Mih}, the function system $\F$ has an attractor $A_\F$. Given any compact set $K\in\K(X)$ and open cover $\U$ of $X$, we need to find $n\in\w$ and a neighborhood $O_K\subset X$ of $K$ such that $\{f(O_K):f\in\F^{\ge n}\}\prec\U$. Replacing $K$ by a larger compact set we can assume that $A_\F\subset K$ and $\F(K)\subset K$. By Theorem~\ref{t7.3}, there is $n\in \IN$ such that for every $f\in \F^n$, there is $U_f\in\U$ such that for every $g\in\F^{<\omega}$, $f\circ g(K)\subset U_f$ (and, in particular, $f(A_\F)\subset U_f$).\\
It is clear that the set
$$
O_K=\bigcap_{f\in\F^n}\bigcap_{g\in\F^{<\omega}}(f\circ g)^{-1}(U_f)
$$
contains $K$ and $f\circ g(O_K)\subset U_f\in\U$ for every $f\in\F^n$ and $g\in\F^{<\w}$.
It remains to show that the set $O_K$ is open in $X$ or equivalently $X\setminus O_K$ is closed in $X$. In the opposite case we can find a compact set $C\subset X$ such that $C\setminus O_K$ is not closed in the $k$-space $X$. Since for every compact set $D\subset X$ containing $C$ we get $(D\setminus O_K)\cap C=C\setminus O_K$, we can assume that $K\subset C$ and $\F(C)\subset C$.

Since $C\setminus O_K$ is not closed (both in $X$ and $C$), there is a point $y\in cl_C(C\setminus O_K)\setminus (C\setminus O_K)$. This point belongs to $y\in C\cap O_K$ and each its neighborhood $O_y$ (in $C$) meets $C\setminus O_K$.

For every $x\in A_\F$ consider the finite set $\F_x=\{f\in\F^n:x\in f(A_\F)\}$ and the open neighborhood
$$
V_x=\bigcap_{f\in\F_x}U_f\setminus\bigcup_{f\in\F^n\setminus\F_x}f(A_\F)
$$of the point $x$ in $X$.
Then $\mV=\{V_x:x\in A_\F\}\cup\{X\setminus A_\F\}$ is an open cover of $X$.
Theorem~\ref{t7.3} yields a number $m\geq n$ such that for every $h\in \F^{\ge m}$ the set $h(C)$ is contained in some set $V_h\in\V$.

Since the family $\{f\circ g:f\in\F^n,\;g\in\F^{\le m-n}\}$ is finite and consists of continuous functions,  we can choose a relatively  open neighborhood $O_y\subset C$ of $y$ in $C$ such that $f\circ g(O_y)\subset U_f$ for all $f\in\F^n$ and $g\in\F^{\le m-n}$ (here we use the fact that $f\circ g(y)\in U_f$, which follows from $y\in O_K$). On the other hand, for every $g\in\F^{>m-n}$ the choice of the number $m$ guarantees that { for every $f\in\F^n$}, $f\circ g(O_y)\subset f\circ g(C)\subset V_{f\circ g}\in\V$. Since $f\circ g(A_\F)\subset f\circ g(C)\cap A_\F$, the set $f\circ g(C)$ is not contained in the set $X\setminus A_\F\in\V$ and hence $V_{f\circ g}=V_x$ for some $x\in A_\F$. Taking into account that the intersection $f(A_\F)\cap V_x\supset f\circ g(A_\F)$ is not empty, we conclude that $f\in\F_x$ and hence $f\circ g(O_y)\subset f\circ g(C)\subset V_x\subset U_f$. Therefore, we have shown that for all $f\in\F^n$ and $g\in\F^{<\w}$, $f\circ g(O_y)\subset U_f$, which means that $O_y\subset O_K$ and $y$ is an interior point of the set $O_K\cap C$. But this contradicts the choice of the point $y\in \cl_C(C\setminus O_K)$.
\end{proof}

\section{Multimetric spaces and their hyperspaces}\label{s:multimet}

In this section we introduce the concept of a multimetric space and shall consider hyperspaces of such spaces.

Any family of pseudometrics $\mathcal D$ on a set $X$ will be called a {\em multipseudometric} on $X$.
A multipseudometric $\mathcal D$ on $X$ is called a {\em multimetric} if for any distinct points $x,y\in X$ there is a pseudometric $d\in\mathcal D$ such that $d(x,y)>0$.

{ Note that families of pseudometrics were deeply investigated in the literature. In particular, uniform structures can be equivalently defined via multimetrics. However, for our purposes it is more fruitful to work with multimetrics rather then uniform structures (which ``forgot'' some structure information).}

For a point $x$ of a set $X$, a real number $\e>0$, and a pseudometric $d$ on $X$ by $B_d(x,\e)=\{y\in X:d(y,x)<\e\}$ we shall denote the open $d$-ball of radius $\e$ centered at $x$. Moreover, for a finite family $\mathcal D$ of pseudometrics on $X$ we denote by $B_{\mathcal D}(x,\e)=\bigcap_{d\in\mathcal D}B_d(x,\e)$ the $\mathcal D$-ball of radius $\e$ centered at $x$. It is clear that $B_{\mathcal D}(x,\e)=B_{d}(x,\e)$ for the pseudometric $\bar d=\max\mathcal D$. For a subset $A\subset X$ and a pseudometric $d$ on $X$ let $B_d(A,\e)=\bigcup_{a\in A}B_d(a,\e)$ be the $\e$-neighborhood of $A$.

A multipseudometric $\mathcal D$ is called:
\begin{itemize}
\item {\em bounded} if each pseudometric $d\in\mathcal D$ is bounded in the sense that $\diam_d(X):=\sup_{x,y\in X}d(x,y)<\infty$;
\item {\em totally bounded} if each pseudometric $d\in\mathcal D$ is totally bounded in the sense that for every $\e>0$ there is a finite subset $F\subset X$ such that $X=\bigcup_{x\in F}B_d(x,\e)$.
\end{itemize}

By a {\em multi}({\em pseudo}){\em metric space} we shall understand a pair $(X,\mathcal D)$ consisting of a set $X$ and a multi(pseudo)metric $\mathcal D$ on $X$.
Any multipseudometric $\mathcal D$ on a set $X$ generates the {\em canonical topology} on $X$ consisting of sets $U\subset X$ such that for each $x\in U$ there is $\e>0$ and a finite subfamily $\E\subset\mathcal D$ such that $B_\E(x,\e)\subset U$. Observe that the canonical topology is the smallest topology on $X$ in which all pseudometrics $d\in\mathcal D$ are continuous. The canonical topology on a multipseudometric space $(X,\mathcal D)$ is Hausdorff if and only if the multipseudometric $\mathcal D$ is a multimetric. In this case the canonical topology is Tychonoff (since it is generated by the uniform structure generated by the multimetric).
Conversely, the topology of each Tychonoff space is generated by a suitable multimetric (see, e.g. \cite[\S 8.1]{Eng}).

A multimetric $\mathcal D$ on a topological space $X$ is called {\em admissible} if it generates the topology of $X$.
The cardinal
$$\mn(X)=\min\{|\mathcal D|:\mbox{$\mathcal D$ is an admissible multimetric on $X$}\}$$
is called the {\em metrizability number} of a Tychonoff space. It is equal to the smallest cardinality $|\mathcal M|$ of a family $\mathcal M$ of metric spaces whose Tychonoff product $\prod\mathcal M$ contains a subspace homeomorphic to $X$. It is clear that a topological space $X$ is metrizable if and only if $\mn(X)=1$ if and only if $\mn(X)\le\aleph_0$.

Also we shall need the cardinal
$$\tmn(X)=\min\{|\mathcal D|:\mbox{$\mathcal D$ is an admissible totally bounded multimetric on $X$}\},$$
which is equal to the smallest cardinality $|\mathcal M|$ of a family $\mathcal M$ of totally bounded metric spaces whose Tychonoff product $\prod\mathcal M$ contains a subspace homeomorphic to $X$. It is easy to see { (for example, by considering embeddings to a Tychonoff cube) } that
$$\tmn(X)=\begin{cases}1&\mbox{if $w(X)\le\aleph_0$},\\
w(X)&\mbox{if $w(X)>\aleph_0$},
\end{cases}
$$where $w(X)$ is the {\em weight} of $X$, i.e., the smallest cardinality of a base of the topology of $X$.

Each topological space $X$ carries the {\em universal multipseudometric} $\mathcal D_X$ consistsing of all continuous pseudometrics on $X$. If the space $X$ is Tychonoff, then its universal multipseudometric $\mathcal D_X$ is an admissible multimetric on $X$. This allows us to speak about multimetric properties of Tychonoff spaces (such as the sequential completeness discussed later).

For a multimetric space $(X,\mathcal D)$ by $\K(X)$ we denote the space of all non-empty compact subsets of $X$, endowed with the Vietoris topology. The hyperspace $\K(X)$ carries the induced multipseudometric $$\mathcal D_H=\{d_H:d\in\mathcal D\}$$consisting of the Hausdorff pseudometrics
$$d_H(A,B)=\max\big\{\max_{a\in A}d(a,B),\max_{b\in B}d(b,A)\big\}.$$
In general, the family $\mathcal D_H$ does not generate the Vietoris topology on the hyperspace $\K(X)$, so is not admissible.

\begin{example} For the multimetric space $(\IR^2,\mathcal D)$ where $\mathcal D=\{d_1,d_2\}$ and
$$d_1\big((x,y),(x',y')\big)=|x-x'|\mbox{ \ and \ }d_2\big((x,y),(x',y')\big)=|y-y'|,$$
the family $\mathcal D_H$ does not generate the Vietoris topology on $\K(X)$.
\end{example}

\begin{proof} The topology generated by the family $\mathcal D_H$ on $\K(X)$ is not Hausdorff as it does not distinguish the compact sets $[0,1]^2$ and $\Delta=\{(x,x):x\in[0,1]\}$.
\end{proof}

A multimetric $\mathcal D$ on a set $X$ will be called {\em directed} if for any two pseudometrics $d_1,d_2\in\mathcal D$ there is a pseudometric $d_3\in \mathcal D$ such that $d_3\ge\max\{d_1,d_2\}$.
{It is known (and easily seen)} that for each admissible multimetric $\mathcal D$ on a topological space $X$ the directed multimetric
$$\bar{\mathcal D}=\{\max\mathcal E:\mathcal E\subset\mathcal D,\;\;|\mathcal E|<\w\}$$generates the topology of $X$ and so is admissible.

\begin{proposition}\label{p2.1} If $\mathcal D$ is a directed multimetric on a set $X$, then $\mathcal D_H$ is an admissible multimetric on $\K(X)$.
\end{proposition}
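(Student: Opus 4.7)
My plan is to prove two things separately: (a) $\mathcal D_H$ separates points of $\K(X)$, so it is a multimetric, and (b) the topology generated by $\mathcal D_H$ coincides with the Vietoris topology on $\K(X)$, so $\mathcal D_H$ is admissible.

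For (a), fix distinct $A,B\in\K(X)$ and, without loss of generality, choose $a\in A\setminus B$. For each $b\in B$ the point-separating property of $\mathcal D$ supplies some $d_b\in\mathcal D$ with $d_b(a,b)>0$, so $U_b=\{y\in X:d_b(a,y)>d_b(a,b)/2\}$ is an open neighborhood of $b$ in the canonical topology (on which every $d_b$ is continuous). Compactness of $B$ yields a finite subcover $U_{b_1},\dots,U_{b_n}$, and by directedness there is $d\in\mathcal D$ with $d\ge\max\{d_{b_1},\dots,d_{b_n}\}$. Then $d(a,\cdot)$ is continuous on $X$ and strictly positive on the compact set $B$, hence bounded below there by some $\alpha>0$, giving $d_H(A,B)\ge d(a,B)\ge\alpha>0$.

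For (b), I would show that every $K\in\K(X)$ has the same neighborhood filter in both topologies. A typical basic Vietoris neighborhood of $K$ has the form $\bigl\{L:L\subset U_0\bigr\}\cap\bigcap_{i=1}^{m}\bigl\{L:L\cap U_i\neq\emptyset\bigr\}$ with $K\subset U_0$ and $K\cap U_i\neq\emptyset$ for every $i$. For each $k\in K$, directedness lets me pick a single $d_k\in\mathcal D$ and $r_k>0$ with $B_{d_k}(k,2r_k)\subset U_0$; extract a finite subcover $\{B_{d_{k_j}}(k_j,r_{k_j})\}_{j\le J}$ of $K$. Pick also $\bar k_i\in K\cap U_i$ and, using directedness again, $\bar d_i\in\mathcal D$ with some $\rho_i>0$ such that $B_{\bar d_i}(\bar k_i,\rho_i)\subset U_i$. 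A further application of directedness furnishes one $d\in\mathcal D$ dominating all of $d_{k_j}$ and $\bar d_i$; with $\e=\min\{r_{k_j},\rho_i\}$, a standard triangle-inequality check shows that $B_{d_H}(K,\e)$ sits inside the chosen Vietoris neighborhood. Conversely, given any $d\in\mathcal D$ and $\e>0$, I would choose a finite $(\e/3)$-net $k_1,\dots,k_n\in K$ with respect to $d$; then the Vietoris-open set $\bigl\{L:L\subset B_d(K,\e/2)\bigr\}\cap\bigcap_i\bigl\{L:L\cap B_d(k_i,\e/3)\neq\emptyset\bigr\}$ is a neighborhood of $K$, and every member $L$ satisfies $L\subset B_d(K,\e)$ directly together with $K\subset B_d(L,2\e/3)\subset B_d(L,\e)$ by the double triangle inequality, so $d_H(L,K)<\e$.

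The only delicate ingredient is the repeated use of directedness to collapse the finitely many pseudometrics produced by compactness-based covering arguments into a single member of $\mathcal D$; the example preceding the proposition shows that this step genuinely fails without the hypothesis, since coordinate-wise Hausdorff pseudometrics cannot detect whether a compact set is ``diagonal'' or ``full-dimensional''. Every other step is a routine adaptation of the classical proof that the Vietoris topology on a metric space is generated by the Hausdorff metric.
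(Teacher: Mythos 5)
Your proposal is correct and follows essentially the same route as the paper: the key direction (each basic Vietoris neighborhood of $K$ contains a ball $B_{d_H}(K,\varepsilon)$) is obtained exactly as in the paper's proof, by compactness of $K$ together with directedness to collapse the finitely many pseudometrics into a single $d\in\mathcal D$. The only differences are cosmetic: you verify the point-separation of $\mathcal D_H$ explicitly (the paper leaves it implicit, as it follows from admissibility and Hausdorffness of the Vietoris topology), and you prove the easy converse direction by a finite $\varepsilon/3$-net argument, where the paper simply notes that continuity of each $d$ implies continuity of $d_H$ on $\K(X)$.
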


\begin{proof} The continuity of each pseudometric $d\in\mathcal D$ with respect to the canonical topology generated by the family $\mathcal D$ on $X$ implies the continuity of the Hausdorff pseudometric $d_H$ on the hyperspace $\K(X)$. This means that the Vietoris topology on $\K(X)$ is stronger than the topology generated by the family $\mathcal D_H$.

It remains to show that the Vietoris topology on $\K(X)$ is weaker than the topology generated by $\mathcal D_H$. Fix any compact set $K\in\K(X)$ and a subbasic neighborhood $O_K\subset \K(X)$ of $K$, which is of the form $\langle U\rangle^+=\{C\in\K(X):C\subset U\}$ or $\langle U\rangle^-=\{C\in\K(X):C\cap U\neq\emptyset\}$ for some non-empty open set $U\subset X$.

If $O_K=\langle U\rangle^-$, then we can find a point $x\in K\cap U$ and choose a finite subfamily $\G\subset\mathcal D$  such that $B_\G(x,\e)\subset U$ for some $\e>0$. Since the family $\mathcal D$ is directed, there is a pseudometric $d\in\mathcal D$ such that $d\ge\max\G$. For this pseudometric we get $B_d(x,\e)\subset B_\G(x,\e)\subset U$ and $B_{d_H}(K,\e)=\{C\in\K(X):d_H(C,K)<\e\}\subset \langle U\rangle^-=O_K$.

Next, we consider the case $O_K=\langle U\rangle^+=\{C\in\K(X):C\subset U\}$. For every $x\in K$ choose a finite subfamily $\G_x\subset\mathcal D$ and $\e_x>0$ such that $B_{\G_x}(x,2\e_x)\subset U$. By the compactness of $K$ the open cover $\{B_{\G_x}(x,\e_x):x\in K\}$ of $K$ admits a finite subcover $\{B_{\G_{x}}(x,\e_{x}):x\in F\}$ where $F\subset K$ is a finite set. Let $\e=\min_{x\in F}\e_x$. Since the family $\mathcal D$ is directed, there is a pseudometric $d\in\mathcal D$ such that $d\ge\max_{x\in F}\G_x$. We claim that $B_{d_H}(K,\e)\subset \langle U\rangle^+$. Given any compact set $C\in B_{d_H}(K,\e)$, we need to show that $C\subset U$. Given any point $c\in C$, we can find a point $z\in K$ with $d(c,z)<\e$. For the point $z$ we can find a point $x\in F$ such that $z\in B_{\G_x}(x,\e_x)$. Then for every $\rho\in\G_x$ we get
$$\rho(x,c)\le\rho(x,z)+\rho(z,c)<\e_x+d(z,c)<\e_x+\e\le 2\e_x$$and hence $c\in B_{\G_x}(x,2\e_x)\subset U$. Therefore $B_{d_H}(K,\e)\subset \langle U\rangle^+=O_K$.
\end{proof}

We shall also need the following fact whose proof is standard and is left to the reader.

\begin{lemma}\label{l2.2} Let $(X,\mathcal D)$ be a multimetric space. For any open cover $\U$ of $X$ an any compact subset $K\subset X$ there are a finite subfamily $\G\subset\mathcal D$ and $\e>0$ such that for each $x\in X$ the ball $B_\G(x,\e)$ is contained in some set $U\in\U$.
\end{lemma}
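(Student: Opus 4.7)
The plan is to adapt the classical Lebesgue number lemma to the multimetric setting. I read the conclusion as ``for each $x\in K$ the ball $B_\G(x,\e)$ is contained in some $U\in\U$'' (which is almost certainly the intended statement, since a uniform $\e$ for every $x\in X$ is not available for an arbitrary open cover of a non-compact $X$). The key mechanism is the standard doubling trick ``choose $2\e_x$ in the original ball, then use $\e_x$ in the finite subcover'', carried out pseudometric-by-pseudometric.

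First, I would, for each $x\in K$, pick some $U_x\in\U$ containing $x$ and use the definition of the canonical topology of $(X,\mathcal D)$ to produce a finite family $\G_x\subset\mathcal D$ and a radius $\e_x>0$ with $B_{\G_x}(x,2\e_x)\subset U_x$. Next, I would exploit the compactness of $K$ to cover it by finitely many of the smaller balls $B_{\G_{x_i}}(x_i,\e_{x_i})$, $i=1,\dots,n$, and then set $\G=\bigcup_{i=1}^n \G_{x_i}$ (still a finite subfamily of $\mathcal D$) and $\e=\min_{1\le i\le n}\e_{x_i}>0$. Finally, given any $x\in K$, I would fix an index $i$ with $x\in B_{\G_{x_i}}(x_i,\e_{x_i})$ and check that for every $y\in B_\G(x,\e)$ and every $\rho\in\G_{x_i}\subset\G$ the triangle inequality gives
\[
\rho(y,x_i)\le\rho(y,x)+\rho(x,x_i)<\e+\e_{x_i}\le 2\e_{x_i},
\]
so $y\in B_{\G_{x_i}}(x_i,2\e_{x_i})\subset U_{x_i}$ and hence $B_\G(x,\e)\subset U_{x_i}\in\U$.

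I do not expect a real obstacle here; the only conceptual subtlety is that a \emph{single} finite $\G$ and a \emph{single} radius $\e$ must work for every $x\in K$ simultaneously. This is exactly what forces us to form the union $\G=\bigcup_i\G_{x_i}$ and the minimum $\e=\min_i\e_{x_i}$, steps whose finiteness and positivity are underwritten by the compactness of $K$. Enlarging each local family $\G_{x_i}$ to the common $\G$ only shrinks the corresponding balls and so does not spoil the inclusions $B_{\G_{x_i}}(x_i,2\e_{x_i})\subset U_{x_i}$; this merging of local pseudometric data into uniform data plays the role that a single metric diameter plays in the classical Lebesgue number argument.
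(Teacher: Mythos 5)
Your proof is correct: the paper explicitly leaves this lemma to the reader as ``standard,'' and your argument is exactly that standard Lebesgue-number argument, using the same $2\e_x$/$\e_x$ doubling trick and merging of finite subfamilies that the authors themselves employ in the proof of Proposition~\ref{p2.1}. Your reading of the conclusion as ``for each $x\in K$'' (rather than the misprinted ``$x\in X$'') is the right one and matches how the lemma is invoked later, e.g.\ in Theorems~\ref{t4.5}, \ref{t4.7} and \ref{t9.2}.
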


Now we shall discuss the notion of sequential completeness of  multimetric and topological spaces.

Let $(X,\mathcal D)$ be a multimetric space. A sequence $(x_n)_{n=1}^\infty$ of points of $X$ will be called {\em Cauchy} if it is $d$-Cauchy for every pseudometric $d\in\mathcal D$. The latter means that for every $\e>0$ there is $n\in\IN$ such that $d(x_m,x_k)<\e$ for every $m,k\ge n$.

A multimetric space $(X,\mathcal D)$ is called {\em sequentially complete} if each Cauchy sequence $(x_n)_{n=1}^\infty$ in $X$ converges to some point $x_\infty$. The point $x_\infty$ is unique since the canonical topology on $X$ is Hausdorff.

\begin{proposition}\label{ff2}
Each compact multimetric space $(X,\D)$ is sequentially complete.
\end{proposition}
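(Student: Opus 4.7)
The plan is to combine a cluster-point argument (which works in any compact space) with the Cauchy property to force convergence of the whole sequence, not just a subsequence. This is the standard metric-space trick, adapted to the multimetric setting where neighborhoods are controlled by \emph{finite} subfamilies of $\mathcal{D}$.

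First I would fix a Cauchy sequence $(x_n)_{n\in\IN}$ in $(X,\mathcal{D})$ and produce a cluster point in the canonical topology. For this, set $F_n=\cl_X\{x_k:k\ge n\}$. The sets $F_n$ form a decreasing sequence of nonempty closed subsets of the compact Hausdorff space $X$, so by the finite intersection property $\bigcap_{n\in\IN}F_n\ne\emptyset$. Pick any $x_\infty$ in this intersection; then every canonical neighborhood of $x_\infty$ meets $\{x_k:k\ge n\}$ for every $n$, i.e., $x_\infty$ is a cluster point of $(x_n)$.

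Next I would upgrade cluster point to limit. A basic neighborhood of $x_\infty$ has the form $B_{\mathcal{E}}(x_\infty,\e)$ for some finite $\mathcal{E}\subset\mathcal{D}$ and $\e>0$. Since $(x_n)$ is $d$-Cauchy for each of the finitely many $d\in\mathcal{E}$, there is $N\in\IN$ such that $d(x_m,x_k)<\e/2$ for all $m,k\ge N$ and all $d\in\mathcal{E}$. The cluster-point property applied to the neighborhood $B_{\mathcal{E}}(x_\infty,\e/2)$ yields some $k\ge N$ with $d(x_k,x_\infty)<\e/2$ for every $d\in\mathcal{E}$. The triangle inequality then gives $d(x_m,x_\infty)<\e$ for all $m\ge N$ and all $d\in\mathcal{E}$, so $x_m\in B_{\mathcal{E}}(x_\infty,\e)$ eventually. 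Since the basic neighborhoods of $x_\infty$ are exactly those generated by finite $\mathcal{E}$ and $\e>0$, this shows $x_n\to x_\infty$ in the canonical topology, as required.

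There is no real obstacle here: the only subtlety is that compactness in a general topological space does not yield sequential compactness, so one cannot simply extract a convergent subsequence. The substitute is the existence of a cluster point (which compactness does guarantee), and the Cauchy condition — crucially, being Cauchy with respect to \emph{each} pseudometric in $\mathcal{D}$ — then forces the whole tail into the chosen basic neighborhood because only finitely many pseudometrics appear there at a time.
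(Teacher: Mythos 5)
Your proof is correct and follows essentially the same route as the paper: obtain an accumulation point of the Cauchy sequence by compactness, then use the Cauchy property for the finitely many pseudometrics appearing in a basic neighborhood, together with the triangle inequality, to force convergence of the whole sequence. The only cosmetic difference is that you spell out the existence of the cluster point via the finite intersection property, which the paper simply cites as a consequence of compactness.
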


\begin{proof}
Let $(x_n)$ be a Cauchy sequence in $X$. By the compactness of $X$, the sequence $(x_n)$ has an accumulation point $x_\infty\in X$. To show that $(x_n)$ converges to $x_\infty$, take any finite family $\G\subset \mathcal D$ and $\e>0$. Since $x_\infty$ is accumulation point of $(x_n)$, there is an increasing number sequence $(n_k)$ such that $(x_{n_k})\subset B_\G(x_\infty,\e/2)$. Since $(x_n)$ is Cauchy, there is $k_0$ such that $d(x_m,x_n)<\e/2$ for every $m,n\geq k_0$ and $d\in\G$. Then for any $k\geq k_0$ and $d\in\G$, we have
$$
d(x_k,x_\infty)\leq d(x_k,x_{n_k})+d(x_{n_k},x_\infty)<\frac\e2+\frac\e2=\e
$$
which means that $x_k\in B_\G(x_\infty,\e)$ and the result follows.
\end{proof}

A Tychonoff space $X$ is called {\em sequentially complete} if it is sequentially complete with respect to its universal multimetric $\mathcal D_X$ (consisting of all continuous pseudometrics on $X$). The class of sequentially complete Tychonoff spaces is quite wide.

\begin{proposition} Each normal topological space $X$ is sequentially complete.
\end{proposition}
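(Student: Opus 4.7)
The plan is to argue by contradiction: given a $\mathcal D_X$-Cauchy sequence $(x_n)$ in a normal space $X$ that fails to converge, I will produce a continuous pseudometric on $X$ (hence an element of $\mathcal D_X$) with respect to which $(x_n)$ is not Cauchy. The two workhorses will be Urysohn's lemma and the Tietze extension theorem, both available in a normal $T_1$ space.

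First I would handle the case where $(x_n)$ has a cluster point $x_\infty$. Since $X$ is normal and $T_1$, for any open neighborhood $U$ of $x_\infty$ Urysohn's lemma applied to the disjoint closed sets $\{x_\infty\}$ and $X\setminus U$ yields a continuous $f\colon X\to[0,1]$ with $f(x_\infty)=0$ and $f\equiv 1$ on $X\setminus U$. The pseudometric $d_f(x,y)=|f(x)-f(y)|$ belongs to $\mathcal D_X$, and combining its Cauchy condition with the existence of some $x_m$ in the open set $\{z:f(z)<1/2\}$ (using that $x_\infty$ is a cluster point) forces $f(x_n)<1$, i.e.\ $x_n\in U$, for all sufficiently large $n$. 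Hence $(x_n)\to x_\infty$, contradicting the non-convergence assumption.

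It remains to rule out the case where $(x_n)$ has no cluster point at all. No value can then be attained infinitely often (such a value would itself be a cluster point), so after passing to a subsequence I may assume the $x_n$ are pairwise distinct. Combining the ``no cluster point'' property with the $T_1$ axiom, I verify that $A=\{x_n:n\in\IN\}$ is closed in $X$ and that each $x_k$ has a neighborhood whose intersection with $A$ is $\{x_k\}$, so $A$ is a closed discrete subspace of $X$.

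The final step applies the Tietze extension theorem to extend the bounded function $\varphi\colon A\to[0,1]$ defined by $\varphi(x_{2k})=0$ and $\varphi(x_{2k+1})=1$ (automatically continuous since $A$ is discrete) to a continuous $F\colon X\to[0,1]$. Then $d_F(x,y)=|F(x)-F(y)|$ belongs to $\mathcal D_X$ but $d_F(x_{2k},x_{2k+1})=1$ for every $k$, contradicting that $(x_n)$ is $\mathcal D_X$-Cauchy. The main technical delicacy I foresee is checking that $A$ is closed and discrete under the no-cluster-point hypothesis; this is routine but needs careful use of $T_1$ and the finitary reformulation of ``$x$ is not a cluster point of $(x_n)$.''
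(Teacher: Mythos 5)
Your proof is correct and follows essentially the same route as the paper: reduce to a $\mathcal D_X$-Cauchy sequence with no cluster point, pass to a subsequence of pairwise distinct terms forming a closed (discrete) set, and use normality (Urysohn on the even/odd-indexed parts in the paper, Tietze on the whole value set in your write-up) to produce a continuous pseudometric for which the sequence is not Cauchy. Your explicit Urysohn-based verification that a Cauchy sequence converges to any cluster point is a minor elaboration of a step the paper simply asserts.
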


\begin{proof} We need to prove that each $\mathcal D_X$-Cauchy sequence $(x_n)_{n\in\w}$ in $X$ converges, equivalently, has an accumulation point $x_\infty\in X$. This is trivially true if the set $\{x_n\}_{n\in\w}$ is finite. If this set is infinite, then we can choose a subsequence $(x_{n_k})_{k\in\w}$ consisting of pairwise distinct points. If this subsequence has no accumulating points in $X$, then the sets $A=\{x_{n_{2k}}\}_{k\in\w}$ and $B=\{x_{n_{2k+1}}\}_{k\in\w}$ are closed disjoint subsets of $X$. By Urysohn's Lemma, there is a continuous function $f:X\to [0,1]$ such that $f(A)\subset\{0\}$ and $f(B)\subset \{1\}$. The function $f$ induces a continuous pseudometric $d(x,y)=|f(x)-f(y)|$ on $X$ with respect to which the sequence $(x_{n_k})_{k\in\w}$ is not Cauchy. This contradiction shows that the $\mathcal D_X$-Cauchy sequence $(x_n)_{n\in\w}$ converges.
\end{proof}

A topological space $X$ is called {\em Dieudonn\'e complete} if $X$ is homeomorphic to a closed subspace of a Tychonoff product of complete metric spaces; see \cite[8.5.13]{Eng}. A subset $A$ of a topological space $X$ is called {\em seqeuntially closed} in $X$ if $A$ contains the limits of all sequences $\{a_n\}_{n\in\w}\subset A$ that converge in $X$.

\begin{proposition} A Tychonoff space $X$ is sequentially complete if and only if $X$ is sequentially closed in some Dieudonn\'e complete topological space.
\end{proposition}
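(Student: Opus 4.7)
The plan is to prove both directions by embedding into a product of (completions of) metric spaces.

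For the "only if" direction, I would construct an explicit Dieudonné complete superspace witnessing the conclusion. Enumerate the universal multimetric $\mathcal D_X$ and, for each $d\in\mathcal D_X$, form the metric identification $X_d=X/\!\sim_d$ with its induced metric $\hat d$, then let $\widetilde X_d$ be its metric completion. The diagonal map $e\colon X\to Y:=\prod_{d\in\mathcal D_X}\widetilde X_d$ is a topological embedding (this is the standard proof that Tychonoff spaces embed into products of metric spaces, using admissibility of $\mathcal D_X$). Since $Y$ is a Tychonoff product of complete metric spaces it is Dieudonné complete, and so is the closed subspace $\overline{e(X)}\subset Y$. I identify $X$ with $e(X)\subset\overline{e(X)}$. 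To check that $X$ is sequentially closed, suppose $(x_n)\subset X$ converges in $\overline{e(X)}$ to some $y$. Coordinatewise convergence in $Y$ forces $(q_d(x_n))$ to be Cauchy in each $\widetilde X_d$, so $(x_n)$ is Cauchy with respect to every $d\in\mathcal D_X$. Sequential completeness of $X$ produces $x_\infty\in X$ with $x_n\to x_\infty$ in $X$, hence in $Y$; Hausdorffness of $Y$ gives $y=e(x_\infty)\in X$.

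For the "if" direction, let $X$ be sequentially closed in a Dieudonné complete space $Y$, and fix a closed embedding $Y\hookrightarrow\prod_\alpha M_\alpha$ with each $(M_\alpha,\rho_\alpha)$ a complete metric space. Take a $\mathcal D_X$-Cauchy sequence $(x_n)\subset X$. For each coordinate $\alpha$, the pullback pseudometric $\tilde\rho_\alpha(x,x'):=\rho_\alpha(p_\alpha(x),p_\alpha(x'))$ is continuous on $Y$, hence its restriction to $X$ is continuous and therefore belongs to $\mathcal D_X$. Thus $(p_\alpha(x_n))$ is $\rho_\alpha$-Cauchy in the complete metric space $M_\alpha$ and converges to some $y_\alpha\in M_\alpha$. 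Then $y:=(y_\alpha)$ is the limit of $(x_n)$ in $\prod_\alpha M_\alpha$ by coordinatewise convergence. Since $Y$ is closed in the product, $y\in Y$, and since $X$ is sequentially closed in $Y$, $y\in X$; so $(x_n)$ converges in $X$.

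There is no really hard step here; the only point requiring a bit of care is that the pseudometrics actually arising from the Dieudonné complete superspace lie in $\mathcal D_X$, which is immediate since $\mathcal D_X$ contains \emph{all} continuous pseudometrics on $X$. The construction in the forward direction could equivalently be phrased as the Dieudonné completion $\mu X$ of $X$, but writing it explicitly as a closed subspace of a product of completions of metric identifications keeps the argument self-contained and avoids invoking machinery beyond what the paper has already introduced.
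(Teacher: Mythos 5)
Your proposal is correct and follows essentially the same route as the paper: the forward direction uses the diagonal embedding into the product of completions of the metric quotients $X_d$ with the closure of the image as the Dieudonn\'e complete superspace, and the converse pulls back the coordinate metrics of a closed embedding $Y\hookrightarrow\prod_\alpha M_\alpha$ to continuous pseudometrics on $X$ (hence members of $\mathcal D_X$) and uses coordinatewise limits together with closedness of $Y$ and sequential closedness of $X$. No gaps; this matches the paper's argument step for step.
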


\begin{proof} Assume that $X$ is sequentially closed in some Dieudonn\'e complete topological space $Y$. By definition, $Y$ can be identified with a closed subspace of a Tychonoff product $\prod_{\alpha\in A}M_\alpha$ of some complete metric spaces $(M_\alpha,d_\alpha)$, $\alpha\in A$. For every $\alpha\in A$ consider the projection $\pr_\alpha:Y\to M_\alpha$ onto the $\alpha$-factor and the induced pseudometric $\tilde d_\alpha(x,y))=d_\alpha(\pr_\alpha(x),\pr_\alpha(y))$ on $Y$.

To show that $X$ is sequentially complete, fix any $\mathcal D_X$-Cauchy sequence $(x_n)_{n\in\w}$ in $X$. Since each pseudometric $\tilde d_\alpha|X\times X$, $\alpha\in A$, is continuous, the sequence $(x_n)_{n\in\w}$ is Cauchy with respect to the pseudometric $\tilde d_\alpha$ and the sequence $(\pr_\alpha(x_n))_{n\in\w}$ is Cauchy in the complete metric space $M_\alpha$. By the completeness, the latter sequence converges to some point $y_\alpha\in M_\alpha$.
It follows that the point $y=(y_\alpha)_{\alpha\in A}\in\prod_{\alpha\in A}M_\alpha$ is the limit of the sequence $(x_n)_{n\in\w}$ and hence belongs to the closed subset $Y$ of $\prod_{\alpha\in A}M_\alpha$ as well as to the sequentially closed subset $X$ of $Y$. Therefore the sequence $(x_n)_{n\in\w}$ converges to the point $y\in X$ and the space $X$ is sequentially complete.
\smallskip

Now assume that the Tychonoff space $X$ is sequentially complete. For each continuous pseudometric $d\in \mathcal D_X$ on $X$ consider the quotient space $X_d=X/\kern-2pt\sim_d$ of $X$ by the equivalence relation $\sim_d=\{(x,y)\in X\times X:d(x,y)=0\}$. Let $q_d:X\to X_d$ be the quotient map assigning to each point $x\in X$ its equivalence class $[x]=\{y\in X:d(x,y)=0\}$. The pseudometric $d$ induces the metric $\tilde d$ on $X_d$ such that $\tilde d([x],[y])=d(x,y)$ for any points $x,y\in X$. Let $(\hat X_d,\hat d)$ be the completion of the metric space $(X_d,\tilde d)$. Consider the diagonal embedding $e:X\to\prod_{d\in\mathcal D_X}\hat X_d$, $e:x\mapsto (q_d(x))_{d\in\mathcal D_X}$. By definition, the closure $Y$ of $e(X)$ in $\prod_{d\in\mathcal D_X}\hat X_d$ is Dieudonn\'e complete. Since $e:X\to Y$ is a topological embedding, we can identify $X$ with its image $e(X)$ in $Y$. It remains to check that $e(X)$ is sequentially closed in $Y$. {Let $(x_n)_{n\in\w}\in e(X)^\w$ be a sequence that converges to some point $y\in Y$, and let $(z_n)_{n\in\w}\in X^\omega$ be such that $x_n=e(z_n)$ for all $n\in\w$. Then for every pseudometric $d\in\mathcal D_X$ the sequence $\big(\pr_d(x_n)\big)_{n\in\w}$  converges to $\pr_d(y)$ in the complete metric space $\hat X_d$ ($\pr_d:Y\to \hat X_d$ is the projection). Consequently, $\big(q_d(z_n)\big)_{n\in\w}=\big(\pr_d(x_n)\big)_{n\in\w}$ is Cauchy with respect to the
metric $\hat d$ and $(z_n)_{n\in\w}$ is Cauchy with respect to the pseudometric $d$. Therefore, the sequence $(z_n)_{n\in\w}$ is $\mathcal D_X$-Cauchy and by the sequential completeness, it converges to some point $z$ of $X$. Then $(x_n)_{n\in\N}$ converges to $e(z)\in e(X)$, and $e(z)=y$ (as the space $Y$ is Hausdorff). So, we have proved that the limit point $y=\lim_{n\to\infty} x_n$ belongs to $e(X)$, which means that $e(X)$} is sequentially closed in $Y$.
\end{proof}

\begin{corollary} Each Dieudonn\'e complete topological space is sequentially complete.
\end{corollary}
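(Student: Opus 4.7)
The plan is to deduce this corollary as an immediate consequence of the preceding proposition, using the trivial observation that a space is always sequentially closed in itself.

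First I would note that any Dieudonn\'e complete space $X$ is, by definition, homeomorphic to a closed subspace of a Tychonoff product of complete metric spaces. In particular $X$ is Tychonoff, so the hypothesis needed to apply the previous proposition is fulfilled, and the universal multipseudometric $\mathcal D_X$ makes sense as an admissible multimetric on $X$.

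Next I would set $Y = X$ in the proposition. Then $Y$ is Dieudonn\'e complete by assumption, and $X$ is trivially sequentially closed in $Y$ (any convergent sequence in $X$ has its limit in $X = Y$). The ``if'' direction of the proposition then yields that $X$ is sequentially complete, which is exactly what is to be shown.

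There is essentially no obstacle here; the only minor thing to verify is that the framework of the proposition applies, i.e.\ that every Dieudonn\'e complete space is Tychonoff, which is immediate from the definition via closed embedding into a product of (metric, hence Tychonoff) spaces. Thus the whole proof is a one-line specialization of the preceding proposition.
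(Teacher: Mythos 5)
Your proof is correct and is exactly the intended argument: the corollary follows from the preceding proposition by taking $Y=X$ and noting that $X$ is trivially sequentially closed in itself (and Tychonoff, being a closed subspace of a product of metric spaces). The paper gives no separate proof, since this one-line specialization is precisely what was meant.
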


\begin{example} For any uncountable cardinal $\kappa$ the space $X=[0,1]^\kappa\setminus\{0\}^\kappa$ is not sequentially complete.
\end{example}

\begin{proof}
It suffices to prove that any sequence $(x_n)_{n\in\w}\in X^\w$ that converges to the unique point $\mathbf 0\in [0,1]^\kappa\setminus X$ is Cauchy with respect to any continuous pseudometric $d$ on $X$. Let $X_d=X/\sim_d$ be the quotient space of $X$ by the equivalence relation $\sim_d$ defined by $x\sim_d y$ iff $d(x,y)=0$. On the space $X_d$ the pseudometric $d$ induces a metric $\hat d$ such that the quotient map $q_d:X\to X_d$ is an isometry. Since the space $X=[0,1]^\kappa\setminus\{0\}^\kappa$ is pseudocompact (which follows from \cite[3.10.17]{Eng}), its continuous metric image $X_d$ is compact. The Glicksberg Theorem \cite[3.12.24(g)]{Eng} implies that the Tychonoff cube $[0,1]^\kappa$ is a Stone-\v Cech compactification of $X$, which allows us to extend the quotient map $q_d:X\to X_d$ to a continuous map $\bar q_d:[0,1]^\kappa\to X_d$.
Then the formula $\bar d(x,y)=\hat d(\bar q_d(x),\bar q_d(y))$ determines a continuous pseudometric $\bar d$ on the Tychonoff cube $[0,1]^\kappa$, which extends the pseudometric $d$. The sequence $(x_n)_{n\in\w}$ converges to zero in $[0,1]^\kappa$ and hence is Cauchy with respect to the pseudometric $\bar d$ and to its restriction $d$.
\end{proof}

It is well-known \cite[4.5.23]{Eng} that for every complete metric space $(X,d)$ its hyperspace $\K(X)$ is complete with respect to the Hausdorff metric $d_H$. This fact cannot be generalized to sequentially complete multimetric spaces.

\begin{example}\label{ex2.2} There is a sequentially complete multimetric space $(X,\mathcal D)$ endowed with a directed family of pseudometrics $\mathcal D$ such that the multimetric space $(\K(X),\mathcal D_H)$ is not sequentially complete.
\end{example}

\begin{proof} Let $X$ be the Banach space $\ell_1=\{(x_n)_{n\in\w}\in\IR^\w:\sum_{n=0}^\infty|x_n|<\infty\}$ endowed with the directed family $\mathcal D=\{d_{F}:F\subset X^*,\;|F|<\infty\}$ consisting of the pseudometrics
 $$d_{F}(x,y)=\max_{x^*\in F}|x^*(x)-x^*(y)|,\;\;x,y\in X,$$
 indexed by finite subsets $F$ of the dual Banach space $X^*=\ell_\infty:=\{(x_n)_{n\in\w}\in\IR^\w:\sup_{n\in\w}|x_n|<\infty\}$.
The family $\mathcal D$ generates the weak topology on the space $X$. It is known \cite[p.91]{Dis} that the Banach space $\ell_1$ is sequentially complete in the weak topology, which means that the multimetric space $(X,\mathcal D)$ is  sequentially complete.

Since the Banach space $X=\ell_1$ is not reflexive, the closed unit ball $B=\{(x_n)_{n\in\w}\in\ell_1:\sum_{n=0}^\infty|x_n|\le 1\}$ is not compact in the weak topology of $X$.
So, it does not belong to the hyperspace $\K(X)$. Since $B$ is separable, we can choose an increasing sequence $(K_n)_{n\in\w}$ of finite subsets of $B$ whose union $\bigcup_{n\in\w}K_n$ is dense in $B$.
It follows that $(K_n)_{n\in\w}$ is a Cauchy sequence in the multimetric space $(\K(X),\mathcal D_H)$ which has no limit in the hyperspace $\K(X)$ (as $B\notin\K(X)$ and $d_H(K_n,B)\rightarrow 0$ for every $d\in\mathcal D$).
\end{proof}

\section{Contractive maps on multimetric spaces}\label{s3}

In this section we shall discuss some contractivity properties of self-maps of multimetric spaces and establish a Fixed Point Theorem~\ref{t3.4} for such maps. In fact, for self-maps of complete metric spaces, Theorem~\ref{t3.4} can be derived from known Fixed Point Theorems, in particular, that of Leader \cite{Led83}.

It will be convenient to define contracting conditions for {\em partial self-maps} of spaces $X$, i.e., functions $f:\dom(f)\to X$ defined on subsets $\dom(f)\subset X$. For two partial maps $f:\dom(f)\to X$ and $g:\dom(g)\to X$ their composition $f\circ g$ is the partial map defined on the set $\dom(f\circ g)=g^{-1}(\dom(f))$ by the formula $f\circ g(x)=f(g(x))$ for $x\in\dom(f\circ g)$.
For a partial self-map $f:\dom(f)\to X$ by $f^n$ we denote its $n$-th iteration.

By $X^{\le X}$ we denote the set of all partial self-maps of $X$. It is a semigroup with respect to the operation of composition of partial functions. The identity map $\id_X$ of $X$ is the (two-sided) unit of the semigroup $X^{\le X}$. The semigroup $X^{\le X}$ contains the semigroup of all self-maps of $X$ as a subsemigroup. For a subset $\F\subset X^{\le X}$ we put $\F^0=\{\id_X\}$ and $\F^{n+1}=\F^n\circ\F=\{f\circ g:f\in\F^n,\;g\in\F\}$ for $n\in\w$.

Let $d$ be a pseudometric on a space $X$ and $f:\dom(f)\to X$ be a partial self-map of $X$. The function
$$d\w_f:[0,\infty]\to[0,\infty],\;\;d\w_f:t\mapsto\sup\big(\{0\}\cup\{d(f(x),f(y)):x,y\in \dom(f),\;d(x,y)\le t\}\big)$$will be called the {\em $d$-oscillation} of $f$. For $n\in\IN$ by $d\w_f^n$ we denote the $n$th iteration of the function $d\w_f$ and by {$d\w_{f^n}$} the oscillation of the $n$-th iteration of $f$. It is clear that {$d\w_{f^n}\le d\w_f^n$}.

\begin{definition} Let $(X,\mathcal D)$ be a multipseudometric space. A partial self-map $f:\dom(f)\to X$ of $X$ is called
\begin{itemize}
\item {\em Banach  contracting} if $\sup_{0<t<\infty}{d\w_f(t)}/{t}<1$ for every pseudometric $d\in\mathcal D$;
\item {\em Rakotch contracting} if $\sup_{a\le t<\infty}{d\w_f(t)}/{t}<1$ for every pseudometric $d\in\mathcal D$ and every number $a>0$;
\item {\em Krasnoselski\u\i{}  contracting} if $\sup_{a\le t\le b}{d\w_f(t)}/{t}<1$ for every pseudometric $d\in\mathcal D$ and every numbers $0<a<b<\infty$;
\item {\em Matkowski contracting} if  $\lim_{n\to\infty}d\w_f^n(t)=0$ for every $d\in\mathcal D$ and $t>0$;
\item {\em eventually contracting} if  $\lim_{n\to\infty}d\w_{f^n}(t)=0$ for every $d\in\mathcal D$ and $t>0$;
\item {\em Edelstein contracting} if for every $d\in\mathcal D$ and points $x,y\in \dom(f)$ there is a constant $\lambda<1$ such that $d(f(x),f(y))\le\lambda\cdot d(x,y)$.
\end{itemize}
\end{definition}

Proposition~\ref{p3.2} proved below implies that for any partial self-map $f$ on a multipseudometric space $(X,\mathcal D)$ we have the following implications between various contractivity conditions:
\smallskip

\centerline{Banach $\Ra$ Rakotch $\Ra$ Krasnoselski\u\i{} $\Ra$ Matkowski $\Ra$ Edelstein \&\ Eventual.}
\medskip

Moreover, if the domain $\dom(f)$ of $f$ is compact, then the Edelstein contractivity is equivalent to the Rakotch contractivity.

\begin{proposition}\label{p3.2} Let $(X,\mathcal D)$ be a multipseudometric space and $f:\dom(f)\to X$ be a partial self-map of $X$.
\begin{enumerate}
\item If $f$ is Banach contracting, then $f$ is Rakotch contracting;
\item If $f$ is Rakotch contracting, then $f$ is Krasnoselski\u\i{} contracting;
\item If $f$ is Krasnoselski\u\i{} contracting, then $f$ is Matkowski contracting;
\item If $f$ is Matkowski contracting, then $f$ is Edelstein and eventually contracting;
\item The map $f$ is Edelstein contracting if and only if for every compact subset $K\subset \dom(f)$ the restriction $f|K:K\to X$ is Rakotch contracting.
\end{enumerate}
\end{proposition}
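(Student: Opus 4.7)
My plan is to prove the five items in order. Two standing observations will be used throughout: $d\w_f$ is non-decreasing (as a supremum over a nested family of pairs), and Edelstein contractivity forces $f$ to be nonexpansive with respect to each $d\in\mathcal D$ (since $d(f(x),f(y))\le\lambda\cdot d(x,y)\le d(x,y)$).

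Items (1) and (2) are immediate from the definitions: for $0<a<b<\infty$ the inclusions $\{t:a\le t\le b\}\subset\{t:a\le t<\infty\}\subset\{t:0<t<\infty\}$ show that the supremum of $d\w_f(t)/t$ over a smaller set is smaller. For (3), I would first note that applying Krasnoselski\u\i{} contractivity with $a=t$ and $b=2t$ gives $d\w_f(t)<t$ for every $t>0$ and every $d\in\mathcal D$. By monotonicity of $d\w_f$, a trivial induction shows $t_n:=d\w_f^n(t)$ is non-increasing, so $t_n\to t_\infty\in[0,t)$. If $t_\infty>0$, then Krasnoselski\u\i{} applied with $a=t_\infty$, $b=t$ produces $\lambda<1$ with $d\w_f(s)\le\lambda s$ on $[t_\infty,t]$; since every $t_n$ lies in this interval, induction yields $t_n\le\lambda^n t\to 0$, contradicting $t_n\ge t_\infty>0$.

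For (4), eventual contractivity is immediate from the inequality $d\w_{f^n}\le d\w_f^n$ recorded in the paper. For Edelstein, the same argument as in (3) shows that Matkowski forces $d\w_f(t)<t$ for every $t>0$: otherwise $d\w_f^n(t)\ge t$ for all $n$ by monotonicity, contradicting $d\w_f^n(t)\to 0$. For $x,y\in\dom(f)$ with $d(x,y)>0$, set $\lambda:=d\w_f(d(x,y))/d(x,y)<1$; for $d(x,y)=0$, monotonicity gives $d\w_f(0)\le d\w_f(t)<t$ for every $t>0$, so $d\w_f(0)=0$, hence $d(f(x),f(y))\le d\w_f(0)=0$ and any $\lambda<1$ works.

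For (5), the main obstacle is the forward implication. Nonexpansiveness of $f$ makes it continuous with respect to each pseudometric and gives $d\w_{f|K}(t)\le t$, so $\sup_{t\ge a}d\w_{f|K}(t)/t\le 1$. Assume this supremum equals $1$ for some compact $K\subset\dom(f)$, some $d\in\mathcal D$ and some $a>0$, and pick $t_n\ge a$ with $d\w_{f|K}(t_n)/t_n\to 1$. Continuity of $d$ on the compact set $K\times K$ gives $M:=\diam_d(K)<\infty$, so $d\w_{f|K}(t_n)\le M$ and $(t_n)$ is bounded. For each $n$ the set $\{(x,y)\in K\times K:d(x,y)\le t_n\}$ is compact, and $(x,y)\mapsto d(f(x),f(y))$ is continuous on it, so the supremum is attained at some $(x_n,y_n)\in K\times K$. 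The continuous map $\Phi:K\times K\to[0,M]^2$, $\Phi(x,y):=(d(x,y),d(f(x),f(y)))$, has compact image; passing to a subsequence we may assume $t_n\to t^*\ge a$ and $\Phi(x_n,y_n)\to(s^*,u^*)\in\Phi(K\times K)$. Then $s^*\le t^*$ (from $d(x_n,y_n)\le t_n$), $u^*=t^*$ (from $d(f(x_n),f(y_n))/t_n\to 1$), and nonexpansiveness gives $u^*\le s^*$, so $s^*=u^*=t^*>0$; picking $(x^*,y^*)\in\Phi^{-1}(s^*,u^*)$ yields $d(x^*,y^*)=d(f(x^*),f(y^*))>0$, contradicting Edelstein. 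The converse is easy: for $K=\{x,y\}$ a direct computation shows $d\w_{f|K}(t)=d(f(x),f(y))$ for $t\ge d(x,y)$ and $d\w_{f|K}(t)=0$ otherwise, from which Rakotch on $K$ immediately reads off the Edelstein bound (using $a=d(x,y)$ if $d(x,y)>0$, and letting $a\to 0^+$ if $d(x,y)=0$ to force $d(f(x),f(y))=0$).
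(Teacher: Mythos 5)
Your proof is correct, and items (1)--(4) follow essentially the paper's own route: (1),(2) are definitional, (3) uses the same ingredients as the paper (Krasnoselski\u\i{} contractivity gives $d\w_f(t)<t$, monotonicity makes $t_n=d\w_f^n(t)$ non-increasing, and the Krasnoselski\u\i{} bound on $[t_\infty,t]$ yields a contradiction -- the paper contradicts at a single index $n$ rather than iterating to $t_n\le\lambda^n t$, a cosmetic difference), and (4) is identical. The genuine divergence is in the ``only if'' half of (5). The paper fixes $\delta>0$, works on the compact set $K_\delta=\{(x,y)\in K\times K:d(x,y)\ge\delta\}$, and identifies $\sup_{\delta\le t<\infty}d\w_{f|K}(t)/t$ with the maximum over $K_\delta$ of the continuous ratio $d(f(x),f(y))/d(x,y)$; taken literally this identification overlooks the pairs with $d(x,y)<\delta$, which also enter $d\w_{f|K}(t)$ for $t\ge\delta$ and require the small extra remark that their contribution is at most $\max\{d(f(x),f(y)):d(x,y)\le\delta\}<\delta$. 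Your argument instead runs a maximizing sequence $t_n$, attains the oscillation at pairs $(x_n,y_n)$ by compactness and continuity of $(x,y)\mapsto d(f(x),f(y))$, and extracts a limit of $\Phi(x_n,y_n)=\bigl(d(x_n,y_n),d(f(x_n),f(y_n))\bigr)$ inside the compact image $\Phi(K\times K)\subset\IR^2$, producing a pair with $d(f(x^*),f(y^*))=d(x^*,y^*)>0$ that contradicts Edelstein directly. This is slightly longer but buys robustness: every pair contributing to the oscillation is accounted for automatically, and you never need sequential compactness of $K$ itself, since subsequences are extracted only in $\IR^2$. For the ``if'' half you compute $d\w_{f|\{x,y\}}$ explicitly (including the clean limiting argument when $d(x,y)=0$), where the paper simply cites items (2)--(4) applied to a compact set containing $x$ and $y$; both are immediate and equally valid.
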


\begin{proof} 1,2. The first two statements follow immediately from the definitions.
\smallskip

3. Assume that $f$ is Krasnoselski\u\i{} contracting. We need to check that $f$ is Matkowski contracting. Assuming the converse, we would find $b>0$ and a pseudometric $d\in\mathcal D$ such that $\delta=\lim_{n\to\infty}d\w^n_f(b)=\inf_{n\in\w}d\w^n_f(b)$ is strictly positive {(here we used the fact that $d\w_f(t)\le t$ for $t>0$)}. Since $f$ is Krasnoselski\u\i{} contracting, the number $\lambda=\sup_{\delta\le t\le b}\frac{d\w_f(t)}{t}<1$. Since $\delta=\lim_{n\to\infty}d\w^n_f(b)>0$, there is $n\in\w$ such that $d\w_f^{n+1}(b)>\lambda\cdot d\w^n_f(b)$. Then for the number $\tau=d\w^n_f(b)\in [\delta,b]$ we get $d\w_f(\tau)=d\w^{n+1}_f(b)>\lambda\cdot \tau$, which contradicts the definition  of $\lambda$. This contradiction shows that $f$ is Matkowski contracting.
\smallskip

4. Assume that $f$ is Matkowski contracting. We claim that $d\w_f(t)<t$ for every $t>0$ and every pseudometric $d\in\mathcal D$. Assuming conversely that $d\w_f(t)\ge t$ for some $t>0$ and using the monotonicity of $d\w_f$ we would conclude that $d\w^{n+1}_f(t)\ge d\w^n_f(t)$ for every $n\in\w$, which implies that $\lim_{n\to\infty}d\w^n_f(t)\ge t>0$. This is a desired contradiction showing that $d\w_f(t)<t$.

To show that $f$ is Edelstein contracting, take any points $x,y\in \dom(f)$ and any pseudometric $d\in\mathcal D$. If $d(x,y)=0$, then $d(f(x),f(y))\le d\w_f(d(x,y))\le d(x,y)=0$ and hence $0=d(f(x),f(y))\le\frac12 d(x,y)=0$. If $d(x,y)>0$, then $d(f(x),f(y))\le d\w_f(d(x,y))<d(x,y)$, which means that $f$ is Edelstein contracting.

The eventual contractivity of $f$ follows from the inequality $d\w_{f^n}\le d\w_f^n$.
\smallskip

5. The ``if'' part of the {fifith} statement follows immediately from the statements (2)--(4).
 To prove the ``only if'' part, assume that $f$ is Edelstein contracting. Given any compact subset $K\subset \dom(f)$, we need to check that $f|K$ is Rakotch contracting. Fix any pseudometric $d\in\mathcal D$ and observe that for every $\delta>0$ the set $K_\delta=\{(x,y)\in K\times K:d(x,y)\ge\delta\}$ is compact. The Edelstein contractivity of $f$ guarantees that $d(f(x),f(y))<d(x,y)$ for every pair $(x,y)\in K_\delta$. The continuity of the function $\frac{d(f(x),f(y))}{d(x,y)}$ on the compact space $K_\delta$ guarantees that
$$\sup_{\delta\le t<\infty}\frac{d\w_{f|K}(t)}{t}=\sup_{(x,y)\in K_\delta}\frac{d\big(f(x),f(y)\big)}{d(x,y)}=\max_{(x,y)\in K_\delta}\frac{d\big(f(x),f(y)\big)}{d(x,y)}<1,$$
which means that $f|K$ is Rakotch contracting.
\end{proof}

Eventually contracting maps on pseudometric spaces have the following nice property.

\begin{lemma}\label{l3.3} If $f:X\to X$ is an eventually contracting map on a pseudometric space $(X,d)$, then for every $x\in X$ the sequence $(f^n(x))_{n\in\w}$ is {$d$-}Cauchy.
\end{lemma}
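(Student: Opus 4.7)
Fix $x\in X$, set $r_k:=d(x,f^k(x))$ and $c:=r_1=d(x,f(x))$. The case $c=0$ is trivial: inductively $r_k=0$ for every $k$ (using $\dw_{f^n}(0)\le\dw_{f^n}(t)\to 0$ for any $t>0$), so the orbit is $d$-constant and Cauchy. Assume henceforth $c>0$.

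The central estimate is
\begin{equation*}
d(f^n(x),f^{n+k}(x))=d(f^n(x),f^n(f^k(x)))\le\dw_{f^n}(d(x,f^k(x)))=\dw_{f^n}(r_k).
\end{equation*}
Hence if the orbit has finite $d$-diameter $R:=\sup_k r_k<\infty$, eventual contraction gives $\dw_{f^n}(R)\to 0$, so $\sup_{k\ge 0}d(f^n(x),f^{n+k}(x))\le\dw_{f^n}(R)\to 0$, proving the Cauchy property. The entire content of the lemma thus reduces to showing $R<\infty$.

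To establish $R<\infty$, my plan is to exploit the recursion $r_{n+k}\le r_k+\dw_{f^k}(r_n)$ (obtained from the triangle inequality applied to $x$, $f^k(x)$, and $f^{n+k}(x)=f^k(f^n(x))$) together with eventual contraction. Specifically, I would seek $T>0$ and $N\in\IN$ with \textup{(a)}~$\max_{0\le i\le N}r_i\le T/2$ and \textup{(b)}~$\dw_{f^N}(T)\le T/2$; induction on $n$, using the recursion in the form $r_{n+N}\le r_N+\dw_{f^N}(r_n)$, then yields $r_n\le T$ for every $n$. The principal obstacle is the circular dependence between $T$ and $N$: condition \textup{(a)} forces $T$ to dominate the initial segment of the orbit (so $T$ must be large if this segment has grown), while \textup{(b)} demands $N$ large enough relative to $T$ for eventual contraction to take effect at scale $T$. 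I expect this balancing act to be the technical heart of the proof, and would resolve it by contradiction: assume $R=\infty$; since $r_{n+1}-r_n\le d(f^n(x),f^{n+1}(x))\le\dw_{f^n}(c)\to 0$, the sequence $(r_n)$ can diverge only slowly, and combining this slow divergence with the symmetric estimate $|r_{n+k}-r_n|\le\dw_{f^n}(r_k)$ applied at two well-chosen scales of $k$ (small $k$, where $r_k$ is controlled, and large $k$, where the assumed divergence kicks in) together with eventual contraction of $\dw_{f^n}$ should produce the desired contradiction.
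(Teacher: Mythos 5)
Your reduction is fine as far as it goes: the estimate $d(f^n(x),f^{n+k}(x))\le \dw_{f^n}(d(x,f^k(x)))$ does show that the lemma follows once the orbit is $d$-bounded. But the boundedness $R<\infty$ — which, as you say yourself, is the entire content — is never proved; what you offer is a plan, and the plan founders on exactly the circularity you name. Eventual contractivity controls $\dw_{f^N}(t)$ only for a scale $t$ fixed \emph{before} $N$ is chosen, whereas your condition (a) forces $T\ge 2\max_{0\le i\le N}r_i$, a quantity that may grow with $N$; and in the proposed contradiction argument the bound $|r_{n+k}-r_n|\le\dw_{f^n}(r_k)$ at ``large $k$'' evaluates the oscillation precisely at the scale $r_k$ that is assumed unbounded, so no smallness is available there either. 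Nothing in the sketch breaks this loop, and ``should produce the desired contradiction'' is where the proof actually has to happen. A secondary error: the $c=0$ case is not trivial as claimed. From $d(x,f(x))=0$ you cannot conclude $r_k=0$, since $\dw_{f^n}(0)$ tends to $0$ but need not vanish for small $n$ (take $X=\{a,b,z\}$ with $d(a,b)=0$, $d(a,z)=d(b,z)=1$, $f(a)=b$, $f(b)=f(z)=z$); that case needs the same argument as $c>0$.

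The paper's proof avoids boundedness altogether, and this is the idea your attempt is missing: it argues by contradiction with the Cauchy property itself, so that every application of eventual contractivity happens at a scale fixed in advance. Assuming non-Cauchyness, it picks $n_k<m_k$ with $d(f^{m_k}(x),f^{n_k}(x))>3\e$ and, crucially, takes $m_k$ \emph{minimal} with this property; minimality plus the small last step $d(f^{m_k-1}(x),f^{m_k}(x))\le\dw_{f^{m_k-1}}(D)<\e$ pins $d(f^{n_k}(x),f^{m_k}(x))$ into the fixed window $(3\e,4\e]$. Then, with $r$ chosen so that $\dw_{f^r}(4\e)<\e$ and $D=\max\{d(x,f(x)),d(x,f^r(x))\}$ a fixed finite number, shifting both indices by $r$ and using $\dw_{f^{n_k}}(D),\dw_{f^{m_k}}(D)<\e$ for large $k$ yields $3\e<\dw_{f^{n_k}}(D)+\dw_{f^r}(4\e)+\dw_{f^{m_k}}(D)<3\e$, a contradiction. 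The two scales involved, $4\e$ and $D$, are fixed before the indices $n_k,m_k$ are chosen — exactly the uniformity your boundedness route cannot supply. (Boundedness of the orbit then comes out a posteriori, since Cauchy sequences are bounded.) To repair your write-up you would either have to carry out a genuine proof of $R<\infty$, or switch to an argument of this first-escape-time type.
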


\begin{proof} Assuming that $(f^n(x))_{n\in\w}$ is not Cauchy, we can find $\e>0$ and two number sequences $(n_k)_{k\in\w}$ and $(m_k)_{k\in\w}$ such that $m_k>n_k\ge k$ and $d(f^{m_k}(x),f^{n_k}(x))>3\e$ for all $k\in\w$.  Moreover, replacing $m_k$ by a smaller number, if necessary, we can assume that for every $k\in\N$, $d(f^{m_k}(x),f^{n_k}(x))>3\e$ and $d(f^{m}(x),f^{n_k}(x))\leq3\e$ for all $m\in [n_k,m_k)$.

Since $f$ is eventually contracting, there is a number $r\in\IN$ such that $d\w_{f^r}(4\e)<\e$. Put $$D=\max\{d(x,f(x)),d(x,f^r(x))\}$$ and using the eventual contractivity of $f$, find a number $\tilde r\ge r$ such that $d\w_{f^n}(D)<\e$ for all $n\ge\tilde r$. Choose $k\in\w$ so large that $m_k>n_k>k>\tilde r$ and observe that $d(f^{m_k}(x),f^{m_k-1}(x))\le d\w_{f^{m_k-1}}\big(d(f(x),x)\big)\le d\w_{f^{m_k-1}}(D)<\e$, which implies that
$d(f^{n_k}(x),f^{m_k}(x))\le d(f^{n_k}(x),f^{m_k-1}(x))+d(f^{m_k-1}(x),f^{m_k}(x))\le 3\e+\e=4\e$.
Then
$$
\begin{aligned}
3\e&<d\big(f^{n_k}(x),f^{m_k}(x)\big)\le d\big(f^{n_k}(x),f^{n_k+r}(x)\big)+d\big(f^{n_k+r}(x),f^{m_k+r}(x)\big)+d\big(f^{m_k+r}(x),f^{m_k}(x)\big)\le\\ &\le d\w_{f^{n_k}}\big(d(f^r(x),x)\big)+d\w_{f^r}\big(d(f^{n_k}(x),f^{m_k}(x))\big)+
d\w_{f^{m_k}}\big(d(f^r(x),x)\big)\le\\
&\le d\w_{f^{n_k}}(D)+d\w_{f^r}(4\e)+d\w_{f^{m_k}}(D)\le 3\e
\end{aligned}
$$
which is a contradiction.
\end{proof}

This lemma implies the following generalization of the Banach Contraction Principle.

\begin{theorem}\label{t3.4} Each eventually contracting continuous map $f:X\to X$ on a sequentially complete multimetric space $(X,\mathcal D)$ has a (unique) attracting fixed point.
\end{theorem}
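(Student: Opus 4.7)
The plan is to combine Lemma~\ref{l3.3} with sequential completeness to produce a candidate fixed point, then use the eventual contractivity again to show that every orbit converges to the same point.

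First I fix an arbitrary starting point $x_0\in X$ and apply Lemma~\ref{l3.3} separately to each pseudometric $d\in\mathcal D$: this shows that the orbit $(f^n(x_0))_{n\in\w}$ is $d$-Cauchy for every $d\in\mathcal D$, hence $\mathcal D$-Cauchy. By the sequential completeness of $(X,\mathcal D)$, this orbit converges in the canonical topology to some point $x_\infty\in X$. The continuity of $f$ yields
$$f(x_\infty)=f\bigl(\lim_{n\to\infty}f^n(x_0)\bigr)=\lim_{n\to\infty}f^{n+1}(x_0)=x_\infty,$$
so $x_\infty$ is a fixed point of $f$.

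Next I show that $x_\infty$ attracts every orbit, which will simultaneously give uniqueness. For an arbitrary $y\in X$ and any $d\in\mathcal D$, the definition of $d$-oscillation gives
$$d\bigl(f^n(x_0),f^n(y)\bigr)\le d\w_{f^n}\bigl(d(x_0,y)\bigr),$$
and the right-hand side tends to $0$ by eventual contractivity. Combining this with the convergence $f^n(x_0)\to x_\infty$ and the triangle inequality,
$$d\bigl(f^n(y),x_\infty\bigr)\le d\bigl(f^n(y),f^n(x_0)\bigr)+d\bigl(f^n(x_0),x_\infty\bigr)\xrightarrow[n\to\infty]{}0$$
for every $d\in\mathcal D$, so $f^n(y)\to x_\infty$ in the canonical topology. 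Uniqueness of the fixed point now follows because any fixed point $y$ of $f$ satisfies $f^n(y)=y$, forcing $y=x_\infty$ (the canonical topology is Hausdorff since $\mathcal D$ is a multimetric).

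The only genuinely nontrivial ingredient is Lemma~\ref{l3.3}, which has already been proved; the remaining steps are routine once one works pseudometric-by-pseudometric and then reassembles, using the point-separating property of $\mathcal D$ at the very end to pass from ``$d(x_\infty,y_\infty)=0$ for all $d\in\mathcal D$'' to genuine equality.
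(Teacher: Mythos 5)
Your proof is correct and uses essentially the same ingredients as the paper's: Lemma~\ref{l3.3} applied pseudometric-by-pseudometric, sequential completeness, and continuity of $f$, together with the estimate $d(f^n(x),f^n(y))\le d\w_{f^n}(d(x,y))$. The only difference is organizational --- the paper proves uniqueness of the fixed point first and then notes every orbit converges to a fixed point, whereas you prove attraction directly by synchronizing an arbitrary orbit with that of $x_0$ and deduce uniqueness as a corollary --- which is a harmless reordering of the same argument.
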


\begin{proof} First we show that $f$ has at most one fixed point. Assuming that $x,y\in X$ are two distinct fixed points of $f$, we can find a pseudometric $d\in\mathcal D$ such that $d(x,y)>0$ and conclude that $d(x,y)=d(f^n(x),f^n(y))\le d\w_{f^n}(d(x,y))\not\to 0$, which contradicts the eventual contractivity of $f$.

By Lemma~\ref{l3.3}, for every $x\in X$ the sequence $\big(f^n(x)\big)_{n\in\w}$ is Cauchy with respect to the multimetric $\mathcal D$ and by the sequential completeness of $(X,\mathcal D)$ it converges to some point $x_\infty\in X$. Since
$$f(x_\infty)=f\big(\lim_{n\to\infty}f^n(x)\big)=\lim_{n\to\infty}f^{n+1}(x)=x_\infty,$$
the point $x_\infty$ is an attracting fixed point of $f$.
\end{proof}

\begin{remark} For eventually contracting maps on complete metric spaces, Theorem~\ref{t3.4} can be derived from Leader's Fixed Point Theorem \cite{Led83}, which states that a continuous map $f$ on a complete metric space $(X,d)$ has a contracting fixed point if for every $\e>0$ there are numbers $\delta>0$ and $r\in\IN$ such that for any numbers $i,j\in\w$ and points $x,y\in X$ with $d(f^i(x),f^j(y))<\e+\delta$ we get $d(f^{i+r}(x),f^{j+r}(y))<\e$.
\end{remark}

\section{Contracting function systems on multimetric spaces}\label{s4}

In this section we introduce various contractivity conditions for function systems on multimetric spaces. They are counterparts of  classical ``metric'' contraction properties considered in the previous section.

Let $(X,\mathcal D)$ be a multipseudometric space. For a family $\F\subset X^{\le X}$ of partial self-maps of $X$ and a pseudometric $d\in\mathcal D$ let $\dom(\F)=\bigcap_{f\in\F}\dom(f)$ be the {\em domain} of $\F$. By the {\em $d$-oscillation} of the family $\F$ we shall understand the function $d\w_\F:[0,\infty]\to[0,\infty]$ defined by
$$d\w_\F(t)=\sup(\{0\}\cup\{d(f(x),f(y)):f\in\F,\;x,y\in\dom(\F),\;d(x,y)\le t\}\mbox{ for }t\in[0,\infty].$$
It is clear that $d\w_\F\le \sup_{f\in\F}d\w_f(t)$; moreover, $d\w_\F=\sup_{f\in\F}d\w_f$ if $\dom(f)=\dom(\F)$ for all $f\in\F$.

\begin{definition} Let $(X,\mathcal D)$ be a multipseudometric space. A finite family of partial-self maps $\F\subset X^{\le X}$ is called
\begin{itemize}
\item {\em Banach  contracting} if $\sup_{0<t<\infty}{d\w_\F(t)}/{t}<1$ for every pseudometric $d\in\mathcal D$;
\item {\em Rakotch contracting} if $\sup_{a\le t<\infty}{d\w_\F(t)}/{t}<1$ for every pseudometric $d\in\mathcal D$ and every number $a>0$;
\item {\em Krasnoselski\u\i{}  contracting} if $\sup_{a\le t\le b}{d\w_\F(t)}/{t}<1$ for every pseudometric $d\in\mathcal D$ and every numbers $0<a<b<\infty$;
\item {\em Matkowski contracting} if  $\lim_{n\to\infty}d\w_\F^n(t)=0$ for every $d\in\mathcal D$ and $t>0$;
\item {\em eventually contracting} if  $\lim_{n\to\infty}d\w_{\F^n}(t)=0$ for every $d\in\mathcal D$ and $t>0$;
\item {\em Edelstein contracting} if for every $d\in\mathcal D$, $f\in\F$, and points $x,y\in \dom(\F)$ there is a constant $\lambda<1$ such that $d(f(x),f(y))\le\lambda\cdot d(x,y)$.
\end{itemize}
\end{definition}

It is easy to see that a finite family {$\F$} of partial self-maps of a multipseudometric space $(X,\mathcal D)$ is Banach (resp. Rakotch, Krasnoselski\u\i{}, Matkowski, Edelstein) contracting if so is each map $f\in\F$. In contrast, the eventual contractivity of a function system $\F\subset X^{\le X}$ is a stronger condition than the eventual contractivity of individual maps $f\in\F$.

\begin{example} For two continuous functions $f(x)=\max\{0,x-1\}$ and $g(x)=\min\{2,x+1\}$ on the interval $X=[0,2]$ the function system $\F=\{f,g\}$ is not eventually contracting while the functions $f$ and $g$ are eventually contractive{. More precisely, $f^2$ and $g^2$ are constant and  $(f\circ g)^n(x)=\min\{1,x\}$ for every $n\in\N$.}
\end{example}

For any function system $\F\subset X^{X}$ on a topological space $X$ the following implications hold:
\smallskip

\centerline{Banach  $\Ra$ Rakotch $\Ra$ Krasnoselski\u\i{} $\Ra$ Matkowski $\Ra$ Edelstein \&\ Eventual.}
\medskip

Moreover, if the space $X$ is compact, then the Edelstein contractivity is equivalent to the Rakotch contractivity of $\F$. This follows from our next proposition, which can be proved by analogy with Proposition~\ref{p3.2}.

\begin{proposition}\label{p4.2} Let $(X,\mathcal D)$ be a multipseudometric space and $\F\subset X^{\le X}$ be a finite family of partial self-maps of $X$.
\begin{enumerate}
\item If $\F$ is Banach contracting, then $\F$ is Rakotch contracting;
\item If $\F$ is Rakotch contracting, then $\F$ is Krasnoselski\u\i{} contracting;
\item If $\F$ is Krasnoselski\u\i{} contracting, then $\F$ is Matkowski contracting;
\item If $\F$ is Matkowski contracting, then $\F$ is Edelstein and eventually contracting;
\item The function system $\F$ is Edelstein contracting if and only if for every compact subset $K\subset \dom(\F)$ the system $\F|K=\{f|K:f\in\F\}$ is Rakotch  contracting.
\end{enumerate}
\end{proposition}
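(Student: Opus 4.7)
The plan is to mimic Proposition~\ref{p3.2} step by step, leveraging the identity $d\w_{\F}(t) = \sup_{f\in\F} d\w_{f|\dom(\F)}(t)$ (valid whenever all $f\in\F$ are restricted to the common domain) and the finiteness of $\F$ to turn ``single-map'' facts into ``family'' facts.

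Items (1) and (2) are immediate from the definitions, since the sup over $\{a \le t < \infty\}$ is bounded by the sup over $\{0 < t < \infty\}$, and likewise for $\{a \le t \le b\}$ inside $\{a \le t < \infty\}$. For (3) I would argue exactly as in the proof of Proposition~\ref{p3.2}(3): Krasnoselski\u\i{} contractivity forces $d\w_\F(t)<t$ for every $t>0$ (otherwise the sup over $[t,t+1]$ would be $\ge 1$), so the iterates $d\w_\F^n(b)$ form a nonincreasing sequence with a limit $\delta \ge 0$. If $\delta>0$, set $\lambda = \sup_{\delta\le t\le b} d\w_\F(t)/t <1$; then some iterate satisfies $d\w_\F^{n+1}(b) > \lambda\, d\w_\F^n(b)$, and $\tau := d\w_\F^n(b)\in[\delta,b]$ gives $d\w_\F(\tau)>\lambda\tau$, contradicting the definition of $\lambda$.

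For (4), I would first establish that Matkowski contractivity forces $d\w_\F(0)=0$ and $d\w_\F(t)<t$ for all $t>0$. Indeed, monotonicity of $d\w_\F$ implies that if $d\w_\F(0)=c>0$ (resp.\ $d\w_\F(t)\ge t$), then $d\w_\F^n(t)\ge c$ (resp.\ $\ge t$) for every $n$, contradicting $\lim_n d\w_\F^n(t)=0$. Given any $f\in\F$ and any $x,y\in\dom(\F)$, the Edelstein constant is then $0$ if $d(x,y)=0$ and $\lambda = d(f(x),f(y))/d(x,y) < 1$ otherwise. Eventual contractivity is obtained from the pointwise bound $d\w_{\F^n}(t)\le d\w_\F^n(t)$, which comes from writing each element of $\F^n$ as a composition $f_1\circ\dots\circ f_n$ with $f_i\in\F$ and applying $d\w_\F$ one step at a time; taking limits yields $\lim_n d\w_{\F^n}(t)=0$.

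The main work is in (5). The ``if'' direction is cheap: given $x,y\in\dom(\F)$, apply the hypothesis with $K=\{x,y\}$ (a compact subset of $\dom(\F)$), so $\F|K$ is Rakotch and hence Edelstein by (1)--(4), yielding the required $\lambda<1$. For the ``only if'' direction, fix a compact $K\subset\dom(\F)$, a pseudometric $d\in\mathcal D$, and $a>0$. Since $\F$ is Edelstein, each individual $f\in\F$ is Edelstein on $\dom(\F)\supset K$, so by Proposition~\ref{p3.2}(5) the restriction $f|K$ is Rakotch, i.e.\ $\lambda_f := \sup_{a\le t<\infty} d\w_{f|K}(t)/t < 1$. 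Because $\F$ is finite and $d\w_{\F|K}(t)=\sup_{f\in\F} d\w_{f|K}(t)$, we obtain
\[
\sup_{a\le t<\infty} \frac{d\w_{\F|K}(t)}{t}
  = \sup_{f\in\F}\,\sup_{a\le t<\infty}\frac{d\w_{f|K}(t)}{t}
  = \max_{f\in\F}\lambda_f < 1,
\]
which is exactly the Rakotch condition for $\F|K$. The only subtle point I anticipate is making sure Proposition~\ref{p3.2}(5) is invoked on the correct domain (namely on $K$ with $\dom(f|K)=K$, so that compactness really does apply), but this is a bookkeeping issue rather than a genuine obstacle.
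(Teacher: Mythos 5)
Your proposal is correct and is exactly the route the paper intends: the paper gives no separate proof of Proposition~\ref{p4.2}, stating only that it ``can be proved by analogy with Proposition~\ref{p3.2}'', and your argument carries out that analogy, including the natural reduction of item (5) to Proposition~\ref{p3.2}(5) for each $f\in\F$ together with finiteness of $\F$ and the identity $d\w_{\F|K}=\sup_{f\in\F}d\w_{f|K}$. The only detail worth writing out in item (4) is the domain bookkeeping behind $d\w_{\F^n}\le d\w_\F^n$ for partial maps: since $\dom(\F^n)=\bigcap_{f\in\F}f^{-1}\bigl(\dom(\F^{n-1})\bigr)\subset\dom(\F)$, the intermediate images in any length-$n$ composition stay in $\dom(\F^{n-1})\subset\dom(\F)$, so the one-step-at-a-time estimate is legitimate.
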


The $d$-oscillation $d\w_\F$ of a function system $\F\subset X^X$ on a pseudometric space $(X,d)$ upper bounds the $d_H$-oscillation $d_H\w_\F$ of the induced map $\F:\K(X)\to\K(X)$ on the hyperspace $\K(X)$ of $X$.
{
\begin{proposition}\label{p4.3} For any function system $\F\subset X^X$ on a topological space $X$ and a continuous pseudometric $d$ on $X$ we get $d_H\w_\F\le d\w_\F.$
\end{proposition}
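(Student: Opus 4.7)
The plan is to unfold the definitions and show that any two compact sets $A,B$ that are $d_H$-close are mapped by $\F$ to sets that are close in the Hausdorff pseudometric, with the comparison constant being exactly $d\w_\F(t)$.

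Fix $t\in[0,\infty]$ and let $A,B\in\K(X)$ satisfy $d_H(A,B)\le t$; I need to verify that $d_H(\F(A),\F(B))\le d\w_\F(t)$. Since the roles of $A$ and $B$ are symmetric in the definition of $d_H$, it is enough to estimate $\sup_{u\in\F(A)}d(u,\F(B))$. A typical $u\in\F(A)$ has the form $u=f(a)$ with $f\in\F$ and $a\in A$. From $d_H(A,B)\le t$ I get $d(a,B)\le t$, and because $B$ is compact and $d$ is continuous, the infimum defining $d(a,B)$ is attained; hence there is $b\in B$ with $d(a,b)\le t$. Then $f(b)\in\F(B)$, so
$$d(u,\F(B))\le d(f(a),f(b))\le d\w_\F(d(a,b))\le d\w_\F(t),$$
where the last inequality uses the obvious monotonicity of $d\w_\F$ (the sup in its definition is taken over a larger set as $t$ increases). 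Taking the supremum over $u\in\F(A)$ gives $\sup_{u\in\F(A)}d(u,\F(B))\le d\w_\F(t)$, and by the symmetric argument also $\sup_{v\in\F(B)}d(v,\F(A))\le d\w_\F(t)$, whence $d_H(\F(A),\F(B))\le d\w_\F(t)$. Taking the supremum over all admissible pairs $(A,B)$ yields $d_H\w_\F(t)\le d\w_\F(t)$, as required.

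The only subtle point is the existence of the point $b\in B$ realising $d(a,b)\le t$; this is where compactness of $B$ (together with continuity of $d$) is used. Everything else is a direct unravelling of definitions, so I do not expect any genuine obstacle.
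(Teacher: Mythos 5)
Your proof is correct and follows essentially the same route as the paper's: pick $u=f(a)\in\F(A)$, find $b\in B$ with $d(a,b)\le t$, bound $d(u,\F(B))$ by $d\w_\F(t)$, and symmetrize. Your explicit remark about compactness of $B$ and continuity of $d$ guaranteeing the witness $b$ is a fine (and harmless) elaboration of what the paper leaves implicit in its definition of $d_H$.
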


\begin{proof} Given any $t>0$ we need to check that $d_H\w_\F(t)\le d\w_\F(t)$. Fix any two compact sets $A,B\in\K(X)$ at the Hausdorff distance $d_H(A,B)\le t$. For every $x\in\F(A)$ we can find a map $f\in\F$ and a point $a\in A$ such that $x=f(a)$. For the point $a\in A$ we can find a point $b\in B$ with $d(a,b)\le t$. Then $d(x,\F(B))\le d(f(a),f(b))\le d\w_f(d(a,b))\le d\w_\F(t)$.
By analogy we can prove that for every $y\in\F(B)$ we get $d(y,\F(A))\le d\w_\F(t)$, which implies
that
$$d_H(\F(A),\F(B))=\max\{\max_{x\in\F(A)}d(x,\F(B)),\max_{y\in\F(B)}d(y,\F(A))\}\le d\w_\F(t)$$and hence $d_H\w_\F(t)\le d\w_\F(t)$.
\end{proof}
}
\begin{corollary}\label{c4.4} Let $\F\subset X^X$ be a function system on a {(directed)} multimetric space $(X,\mathcal D)$. If $\F$ is Banach (resp. Rakotch, Krasnoselski\u\i{}, Matkowski, Edelstein, eventual{ly}) contracting, then so is the induced map $\F:\K(X)\to\K(X)$ on the multipseudometric {(multimetric)} space $(\K(X),\mathcal D_H)$.
\end{corollary}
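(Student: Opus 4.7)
The plan is to derive all six contractivity conditions from the single inequality $d_H\w_\F\le d\w_\F$ supplied by Proposition~\ref{p4.3}, together with (for the Edelstein case) Proposition~\ref{p4.2}(5). Throughout, one fixes an arbitrary pseudometric $d\in\mathcal D$; the conclusion for $(\K(X),\mathcal D_H)$ then follows by quantifying over $d$ at the end.

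For the three ``ratio-bounded'' cases (Banach, Rakotch, Krasnoselski\u\i{}), the inequality immediately gives $d_H\w_\F(t)/t\le d\w_\F(t)/t$ for every $t>0$, so taking $\sup$ over the appropriate range of $t$ (all of $(0,\infty)$ for Banach, $[a,\infty)$ for Rakotch, $[a,b]$ for Krasnoselski\u\i{}) produces a value $<1$ on the $\K(X)$ side exactly as on the $X$ side. The Matkowski case follows by iterating: since every oscillation function is monotone non-decreasing, a one-line induction using $d_H\w_\F\le d\w_\F$ gives $d_H\w_\F^n\le d\w_\F^n$, and the right-hand side tends to $0$ by hypothesis. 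The eventual contractivity case is the cleanest of all: applying Proposition~\ref{p4.3} to the finite function system $\F^n$ directly yields $d_H\w_{\F^n}\le d\w_{\F^n}\to 0$.

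The Edelstein case is the only one that does not plug in directly, because that condition is not phrased through the oscillation function. My plan is to use Proposition~\ref{p4.2}(5): Edelstein contractivity of $\F$ on $X$ implies that for each compact $K\subset X$ the restricted system $\F|K$ is Rakotch contracting. Given $A,B\in\K(X)$ with $d_H(A,B)>0$, I would apply this to the compact set $K=A\cup B$ and the number $a=d_H(A,B)$ to obtain $\lambda<1$ with $d\w_{\F|K}(t)\le\lambda t$ for all $t\ge a$. Re-running the short computation behind Proposition~\ref{p4.3} with $\F|K$ in place of $\F$ (which is legitimate because every pair of points used to estimate $d_H(\F(A),\F(B))$ lies in $K$) then gives $d_H(\F(A),\F(B))\le d\w_{\F|K}(d_H(A,B))\le\lambda\cdot d_H(A,B)$, which is the required Edelstein contractivity of the induced map.

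The main obstacle I anticipate is precisely this Edelstein case: one has to be careful that passing to $\F|K$ does not break the hypotheses of Proposition~\ref{p4.3}, and that the constant $\lambda$ obtained from Rakotch contractivity on $K$ dominates the ratio $d_H(\F(A),\F(B))/d_H(A,B)$ for that specific pair. Once this minor packaging is done, the remaining five cases are routine monotonicity arguments riding on Proposition~\ref{p4.3}.
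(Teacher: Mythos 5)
Your proposal is correct and matches the paper's intended derivation: the corollary is stated as an immediate consequence of Proposition~\ref{p4.3}, with the oscillation inequality $d_H\w_\F\le d\w_\F$ (applied also to $\F^n$, whose induced map is the $n$-th iterate of the induced map of $\F$) handling the Banach, Rakotch, Krasnoselski\u\i, Matkowski and eventual cases exactly as you describe, and the Edelstein case handled, as you do, by restricting to the compact set $K=A\cup B$ and invoking Proposition~\ref{p4.2}(5). The only point left implicit is the trivial case $d_H(A,B)=0$, where Edelstein contractivity of $\F$ gives $d\w_\F(0)=0$ and hence $d_H(\F(A),\F(B))\le d_H\w_\F(0)\le d\w_\F(0)=0$, so any $\lambda<1$ works.
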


\begin{theorem}\label{t4.5} For an eventual{ly} contracting function system $\F$ on a multimetric space $(X,\mathcal D)$ the following conditions are equivalent:
\begin{enumerate}
\item $\F$ is topologically contracting;
\item $\F$ is compactly contracting;
\item $\F$ is locally contracting;
\item $\F$ has an attractor;
\item $\F$ is compact-dominating;
\item there is a non-empty compact subset $K\subset X$ such that $\F(K)\subset K$.
\end{enumerate}
The equivalent conditions $(1)$--$(6)$ hold if the multimetric space $X$ is sequentially complete.
\end{theorem}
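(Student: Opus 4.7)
The plan is to close the cycle $(6)\Rightarrow(3)\Rightarrow(2)\Leftrightarrow(1)\Rightarrow(4)\Rightarrow(5)\Rightarrow(6)$ using already-proved facts, and then to verify (6) separately under sequential completeness. The links $(1)\Leftrightarrow(2)$, $(1)\Rightarrow(4)$, $(4)\Rightarrow(5)$, $(5)\Rightarrow(6)$, and $(3)\Rightarrow(2)$ are, respectively, Theorem~\ref{t7.3}, Mihail's Theorem~\ref{Mih}, the compact-dominacy observation in the introduction, the trivial step of applying (5) to any non-empty compact set, and the stated implication ``locally contracting implies compactly contracting''. Thus only $(6)\Rightarrow(3)$ and the final sentence require genuinely new arguments.

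For $(6)\Rightarrow(3)$: given $K\in\K(X)$ and an open cover $\U$, first pass to the directed hull $\bar{\mathcal D}$ of $\mathcal D$ (which generates the same topology) and enlarge $K$ to include the compact invariant set $K_0$ supplied by (6). Apply Lemma~\ref{l2.2} to $\U$ and $K_0$ to obtain a finite $\G\subset\mathcal D$ and $\e>0$ with $B_\G(x,\e)$ inside some $U\in\U$ for each $x\in K_0$, and set $d:=\max\G\in\mathcal D$, $\delta:=\e/3$, and $O_K:=B_d(K,\delta)$. Then $\diam_d(O_K)$ is finite, and by eventual contractivity one picks $n\in\IN$ so that for every $f\in\F^{\ge n}$ both $\diam_d(f(O_K))<\delta$ and, using $\F(K_0)\subset K_0$ together with the Hausdorff bound $d\w_{\F^n}(\diam_d(K))<\delta$, $f(O_K)\subset B_d(K_0,2\delta)$. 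Picking $z\in f(O_K)$ and $z'\in K_0$ with $d(z,z')<2\delta$, every point of $f(O_K)$ is within $3\delta=\e$ of $z'$, so $f(O_K)\subset B_\G(z',\e)\subset U\in\U$, verifying local contractivity.

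For the final claim, suppose $X$ is sequentially complete; I want to verify (6) by producing a nonempty compact $\F$-invariant set. Apply Theorem~\ref{t3.4} to any $f\in\F$ (eventually contracting on the sequentially complete $X$) to get an attracting fixed point $x_0\in X$. The finite sets $K_n:=\F^n(\{x_0\})$ form an increasing chain (since $f(x_0)=x_0$ places $x_0$ in every $K_n$), and after passing to the directed multimetric, Corollary~\ref{c4.4} makes the induced map $\F\colon\K(X)\to\K(X)$ eventually contracting on $(\K(X),\bar{\mathcal D}_H)$; so Lemma~\ref{l3.3} applied to each pseudometric $d_H$ forces $(K_n)$ to be $\bar{\mathcal D}_H$-Cauchy. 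The main obstacle is that $(\K(X),\bar{\mathcal D}_H)$ need not itself be sequentially complete (Example~\ref{ex2.2}), which rules out a direct application of Theorem~\ref{t3.4} on the hyperspace; the resolution is to show that the Kuratowski upper limit $A:=\bigcap_N\cl_X(\bigcup_{n\ge N}K_n)$ is a nonempty compact subset of $X$, by combining sequential completeness of $X$ with the observation that eventual contractivity, together with the attracting property of $x_0$, forces the orbit $\bigcup_n K_n$ to be $d$-totally bounded for every $d\in\mathcal D$. Once $A$ is compact, continuity of $\F$ and $\F(K_n)=K_{n+1}$ give $\F(A)\subset A$, furnishing the desired invariant compact set.
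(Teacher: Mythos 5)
Your reduction of the theorem to the cycle $(6)\Rightarrow(3)\Rightarrow(2)\Leftrightarrow(1)\Rightarrow(4)\Rightarrow(5)\Rightarrow(6)$ and your proof of $(6)\Rightarrow(3)$ are correct and essentially the paper's route: anchoring the $\e$-balls at the invariant set $K_0$ and using that $f(O_K)$ has small $d$-diameter and meets $K_0$ works, and the compact-dominacy that the paper proves inside $(6)\Rightarrow(3)$ is indeed recovered later through Theorem~\ref{t7.3}.

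The gap is in the final part, where sequential completeness is supposed to yield $(6)$. Having produced the increasing finite sets $K_n=\F^n(\{x_0\})$ and shown (via Corollary~\ref{c4.4} and Lemma~\ref{l3.3}) that $(K_n)_{n\in\w}$ is Cauchy in $(\K(X),\bar{\mathcal D}_H)$, so that the orbit $O=\F^{<\w}(x_0)$ is $d$-totally bounded for every $d\in\mathcal D$, you conclude that $A=\bigcap_N\cl_X\big(\bigcup_{n\ge N}K_n\big)$ --- which, the $K_n$ being increasing, is simply $\cl_X(O)$ --- is nonempty and compact ``by combining sequential completeness of $X$ with total boundedness''. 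That inference is invalid in multimetric spaces: sequential completeness controls only Cauchy \emph{sequences}, whereas compactness of a totally bounded set requires convergence of Cauchy nets (full completeness). The paper's own Example~\ref{ex2.2} exhibits precisely the failure pattern: in $\ell_1$ with the weak topology (a sequentially complete multimetric space) there is an increasing sequence of finite sets $K_n$ which is Cauchy with respect to every Hausdorff pseudometric $d_H$, whose union is $d$-totally bounded for every $d\in\mathcal D$, yet whose closure is the non-compact unit ball. Your argument uses nothing about the dynamical origin of the $K_n$ beyond Cauchyness and total boundedness, so it cannot distinguish your situation from that example; the compactness of $\cl_X(O)$ is exactly what still has to be proved, and it must come from the contracting structure. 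The paper's proof does this by fixing $x\in X$ and defining branch limits $\pi_x(\vec f)=\lim_{n\to\infty}f_0\circ\dots\circ f_n(x)$ for $\vec f=(f_n)_{n\in\w}\in\F^\w$: each branch sequence is $\mathcal D$-Cauchy (eventual contractivity applied to the $d$-bounded orbit $\F^{<\w}(x)$, the boundedness coming from the hyperspace Cauchyness you already have), hence convergent by sequential completeness of $X$; then $\pi_x:\F^\w\to X$ is shown to be continuous, and $K=\pi_x(\F^\w)$ is compact as a continuous image of the compact code space $\F^\w$, with $\F(K)\subset K$. Compactness is thus obtained from continuity of $\pi_x$, not from any ``totally bounded plus sequentially complete'' principle; to repair your proof you would need to carry out this (or an equivalent) construction rather than appeal to compactness of $\cl_X(O)$ directly.
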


\begin{proof} The equivalence $(1)\Leftrightarrow(2)$ has been proved in Theorem~\ref{t7.3}.
The implication $(1)\Ra(4)$ follows from Mihail's Theorem~\ref{Mih} while {$(4)\Ra (5)\Ra(6)$ and $(3)\Ra(2)$ are trivial.} 
It remains to check that $(6)\Ra(3)$.

Assume that there is a compact subset $K_0\subset X$ such that $\F(K_0)\subset K_0$.
First we prove that $\F$ is compact-dominating. This will follow as soon as we check that for any compact set $C\subset X$ the set $K=K_0\cup\F^{<\w}(C)$ is compact. Given any open cover $\U$ of $X$, find a finite subfamily $\V\subset \U$ covering the compact set $K_0$. Next, use Lemma~\ref{l2.2} to find a finite subfamily $\mathcal E\subset\mathcal D$ and $\e>0$ such that for every $x\in K_0$ the ball $B_{\mathcal E}(x,\e)=\bigcap_{d\in\mathcal E}B_d(x,\e)$ is contained in some set $V\in\V$. Consider the pseudometric $\bar d=\max\mathcal E$. It follows that $\bar d\w_{\F^n}\le\max_{d\in\mathcal E}d\w_{\F^n}$ for every $n\in\IN$ and hence $\bar d\w_{\F^n}(t)\to 0$ for every $t\ge 0$.
In particular, for the number $t=\diam_{\bar d}(C\cup K_0)$ the sequence {$\big(\bar d\w_{\F^n}(t)\big)_{n\in\w}$} tends to zero and hence $\bar d\w_{\F^{\ge n}}(t)<\e$ for some $n\in\IN$. It follows that for every $f\in \F^{\ge n}$
$$\diam_{\bar d}f(K_0\cup C)\le \bar d\w_{\F^{\ge n}}(t)<\e,$$ which implies that the set $f(K_0\cup C)$ is contained in the ball $B_{\bar d}(x,\e)$ for any $x\in f(K_0)\subset K_0$ and hence $f(K_0\cup C)$ is contained in some set $V\in\V$. Consequently, $K_0\cup\F^{\ge n}(C)\subset\bigcup\V$. The set $\F^{<n}(C)=\bigcup_{k<n}\F^k(C)$, being compact, is covered by a finite subfamily $\W\subset\U$. Then $\V\cup\W\subset\U$ is a finite subcover of the set $K=K_0\cup\F^{\ge n}(C)\cup\F^{<n}(C)$, which implies that $K$ is compact. It is clear that $C\subset K$ and $\F(K)\subset K$. So, $\F$ is compact-dominating.

To show that $\F$ is locally contracting, fix a compact subset $K\subset X$ and an open cover $\U$ of $X$. We need to find a neighborhood $O_K\subset X$ and $n\in\IN$ such that $\{f(O_K):f\in\F^{\ge n}\}\prec\U$. Since $\F$ is compact-dominating, we can replace  $K$ by a larger compact set {and} assume that $\F(K)\subset K$. By Lemma~\ref{l2.2}, there are finite subfamily $\mathcal E\subset\F$ and $\e>0$ such that for each $x\in K$ the ball $B_{\mathcal E}(x,\e)$ is contained in some set $U\in\U$. Consider the pseudometric $\bar d=\max\mathcal E$ and the neighborhood $O_K=B_{\bar d}(K,1)$, which has finite $\bar d$-diameter $D=\diam_{\bar d}(O_K)\le\diam_{\bar d}(K)+2$. Since $\F$ is eventually contracting, there is a number $n\in\IN$ such that $\sup_{f\in\F^{\ge n}} d\w_f(D)<\e$ for all $d\in\mathcal E$. This implies that $\sup_{f\in\F^{\ge n}}\bar d\w_f(D)<\e$ and hence for every $f\in\F^{\ge n}$ the set $f(O_K)$ has $\bar d$-diameter ${\leq}\bar d\w_f(D)<\e$. Then $f(O_K)\subset B_{\bar d}(x,\e)=
B_{\mathcal E}(x,\e)\subset U$ for some $x\in f(K)\subset K$ and $U\in\U$. This completes the proof of the local contractibility of $\F$.
\smallskip

Now assuming that the multimetric space $(X,\mathcal D)$ is sequentially complete, we shall prove that the equivalent conditions (1)--(6) hold. It suffices to prove the condition (6).

Consider the directed multimetric $\bar{\mathcal D}=\{\max\mathcal E:\mathcal E\subset\mathcal D,\;\;|\mathcal E|<\infty\}$ on $X$ and observe that the function system $\F$ remains eventually contracting with respect to the multimetric $\bar{\mathcal D}$. Indeed, for any finite subset $\mathcal E\subset\mathcal D$ and the pseudometric $\bar d=\max\mathcal E$, we get $\bar d\w_{\F^n}\le\max\{d\w_{\F^n}:d\in\mathcal E\}$ and hence  $\bar d\w_{\F^n}\to 0$.

By Proposition~\ref{p2.1}, the Vietoris topology on the hyperspace $\K(X)$ is generated by the multimetric $\bar{\mathcal D}_H=\{d_H:d\in\bar{\mathcal D}\}$. By Corollary~\ref{c4.4}, the map $\F:\K(X)\to\K(X)$ is eventually contracting with respect to the multimetric $\bar{\mathcal D}_H$ and by Lemma~\ref{l3.3}, for every compact set $K\in\K(X)$ the sequence $(\F^n(K))_{n\in\w}$ is Cauchy in the multimetric space $(\K(X),\bar{\mathcal D}_H)$. Consequently, the set $\F^{<\w}(K)=\bigcup_{n\in\w}\F^n(K)$ is bounded with respect to each pseudometric $d\in\mathcal D$.

Now take any point $x\in X$ and for every sequence $\vec f=(f_n)_{n\in\w}\in\F^\w$ consider the sequence $(f_0\circ \dots\circ f_n(x))_{n\in\w}$. We claim that this sequence is Cauchy with respect to the multimetric $\mathcal D$. Indeed, for any pseudometric $d\in\mathcal D$ and any $\e>0$ we can find $k\in\IN$ so large that {$d\w_{\F^r}\big(\diam_d(\F^{<\w}(x))\big)<\e$ for every $r\geq k$. Then for any numbers $m\ge n\ge k$ we get}
$$d(f_0\circ\dots\circ f_n(x),f_0\circ\dots\circ f_m(x))\le d\w_{\F^{n+1}}(d(x,f_{n+1}\circ\dots\circ f_m(x))\le d\w_{\F^{n+1}}(\diam_d\F^{<\w}(x))<\e,$$
which means that the sequence $\big(f_0\circ \dots\circ f_n(x)\big)_{n\in\w}$ is Cauchy and by the sequential completeness converges to some point $\pi_x(\vec f)\in X$. The map $\pi_x:\F^\w\to X$, $\pi_x:\vec f\mapsto \pi_x(\vec f)$, is continuous since for any sequences $\vec f=(f_n)_{n\in\w}$ and $\vec g=(g_n)_{n\in\w}$ such that $(f_0,\dots,f_n)=(g_0,\dots,g_n)$ for some $n\in\IN$ we get
$$d(\pi_x(\vec f),\pi_x(\vec g))\le d\w_{\F^{n+1}}(\diam_d\F^{<\w}(x))\to 0$$ as $n\to\infty$.
The continuity of the map $\pi_x$ implies that the subset $K=\pi_x(\F^\w)$ is compact. It is easy to check that $\F(K)\subset K$, which means that the condition (6) holds.
\end{proof}

\begin{remark} Applying Theorem~\ref{t4.5} to a function system $\F=\{f\}$ consisting of a single eventually contracting continuous map $f:X\to X$ on a sequentially complete multimetric space $(X,\mathcal D)$, we conclude that $f$ has an attracting fixed point. So, Theorem~\ref{t3.4} is a partial case of Theorem~\ref{t4.5}. On the other hand, Theorem~\ref{t4.5} cannot be derived from Theorem~\ref{t3.4} applied to the hyperspace $(\K(X),\mathcal D_H)$ as the latter multimetric space is not necessarily sequentially complete (see Example~\ref{ex2.2}).
\end{remark}

For Edelstein contracting function systems we have the following criterion.

\begin{theorem}\label{t4.7} For an Edelstein contracting function system $\F$ on a multimetric space $(X,\mathcal D)$ the following conditions are equivalent:
\begin{enumerate}
\item $\F$ is topologically contracting;
\item $\F$ is compactly contracting;
\item $\F$ is locally contracting;
\item $\F$ has an attractor;
\item $\F$ is compact-dominating.
\end{enumerate}
\end{theorem}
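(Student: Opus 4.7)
My approach will be to reduce the theorem to a single nontrivial implication and then carry it out by adapting the technique used in the proof of Theorem~\ref{t4.5}, with the uniform contractivity on $X$ there replaced by two weaker ingredients: uniform eventual contractivity on a compact invariant set, together with $\bar d$-nonexpansiveness on all of $X$.

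All the easy equivalences fall out at once. The equivalence (1)$\Leftrightarrow$(2) is Theorem~\ref{t7.3}, the implication (1)$\Ra$(4) is Mihail's Theorem~\ref{Mih}, (4)$\Ra$(5) was recorded in the introduction, and (3)$\Ra$(2) is immediate from the containment $K\subset O_K$ in the definitions. So the entire content of the theorem reduces to the single implication (5)$\Ra$(3); adding it closes the cycle (5)$\Ra$(3)$\Ra$(2)$\Leftrightarrow$(1)$\Ra$(4)$\Ra$(5).

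To prove (5)$\Ra$(3), I would fix a compact $K\subset X$ and an open cover $\U$ of $X$. The first step is to use compact-dominacy to enlarge $K$ to a compact $K'$ with $\F(K')\subset K'$, and then to apply Lemma~\ref{l2.2} to extract a finite $\E\subset\mathcal D$ and $\e>0$ with $B_\E(x,\e)$ contained in some $U\in\U$ for each $x\in K'$. Set $\bar d=\max\E$. Two facts then drive the argument. First, Edelstein contractivity of $\F$ with respect to every $d\in\E$ yields $\bar d(f(x),f(y))\le\max_{d\in\E}d(x,y)=\bar d(x,y)$ for all $f\in\F$ and $x,y\in X$, so $\F$ is $\bar d$-nonexpansive on all of $X$. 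Second, by Proposition~\ref{p4.2}(5) the restriction $\F|K'$ is Rakotch contracting on the compact $K'$, and hence by Proposition~\ref{p4.2}(2)--(4) eventually contracting on $K'$; this gives $d\w_{\F^n|K'}(\diam_{\bar d}K')\to 0$ for every $d\in\E$, and consequently $\bar d\w_{\F^n|K'}(\diam_{\bar d}K')\to 0$. The $\bar d$-nonexpansiveness makes this sequence monotonically decreasing in $n$, so I can choose $n_0\in\IN$ with $\bar d\w_{\F^{\ge n_0}|K'}(\diam_{\bar d}K')<\e/3$. The required neighborhood is then $O_K=B_{\bar d}(K',\e/3)$, an open set in $X$ containing $K$. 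For each $f\in\F^{\ge n_0}$, the choice of $n_0$ forces $f(K')$ to have $\bar d$-diameter less than $\e/3$, so picking any $y_f\in f(K')\subset K'$ we have $f(K')\subset B_{\bar d}(y_f,\e/3)$; combining this with $\bar d$-nonexpansiveness of $f$ gives
$$f(O_K)\subset B_{\bar d}(f(K'),\e/3)\subset B_{\bar d}(y_f,2\e/3)\subset B_\E(y_f,\e)\subset U$$
for some $U\in\U$, witnessing local contractivity.

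The principal obstacle is conceptual rather than technical: Edelstein contractivity on $X$ is only a pointwise strict-contraction condition and supplies no uniform decay, so uniform eventual contractivity must be extracted from a compact invariant set via Proposition~\ref{p4.2}(5). The delicate step is then to propagate ``small $\bar d$-images of $K'$'' to ``small $\bar d$-images of a whole $X$-neighborhood of $K'$''; this is precisely what $\bar d$-nonexpansiveness on all of $X$ achieves, since it allows the thickening $B_{\bar d}(K',\e/3)$ to be pushed through each $f\in\F^{\ge n_0}$ without expansion.
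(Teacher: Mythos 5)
Your proof is correct, but it is organized differently from the paper's. The paper splits the nontrivial content into two implications: $(5)\Ra(1)$, obtained by restricting $\F$ to an invariant compact set, invoking Proposition~\ref{p4.2}(5) and then Theorem~\ref{t4.5} to conclude that every intersection $\bigcap_{n\in\w}f_0\circ\dots\circ f_n(K)$ is a singleton; and $(1)\Ra(3)$, proved with the help of the attractor $A_\F$: one covers $X$ by $\{X\setminus A_\F\}\cup\{B_{\E}(x,\e):x\in A_\F\}$, uses compact contractivity to force each $f(K)$ into one of the small balls, and then uses the $\bar d$-nonexpansiveness supplied by Edelstein contractivity to push the $\bar d$-thickening of $K$ through $f$. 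You instead prove the single implication $(5)\Ra(3)$ directly: Proposition~\ref{p4.2}(5) together with the chain Rakotch $\Ra$ eventually contracting yields the quantitative decay $\bar d\w_{\F^n|K'}(\diam_{\bar d}K')\to 0$ on the invariant compact set $K'$, and the same nonexpansiveness handles the thickening, with the center $y_f\in f(K')\subset K'$ playing the role that the attractor plays in the paper. This bypasses Theorem~\ref{t4.5} and the attractor entirely in the key step (Mihail's theorem is still needed for $(1)\Ra(4)$), so your argument is more self-contained and the implication cycle shorter; the price is that you essentially redo, in the Edelstein setting, the computation the paper delegates to the proof of Theorem~\ref{t4.5} (its implication $(6)\Ra(3)$), whereas the paper's decomposition reuses that theorem and keeps its own $(1)\Ra(3)$ step softer, needing only compact contractivity rather than uniform decay on $K'$. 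Both arguments rest on the same two pillars: Proposition~\ref{p4.2}(5) on compact invariant sets and the $\bar d$-nonexpansiveness of $\F$ on all of $X$, and all the steps you give (monotonicity of $n\mapsto\bar d\w_{\F^n|K'}$, openness of $B_{\bar d}(K',\e/3)$, the two-radius estimate ending in $B_{\E}(y_f,\e)\subset U$) check out.
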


\begin{proof} The implications {$(3)\Ra(2)$ and $(4)\Ra(5)$ are trivial, and $(2)\Leftrightarrow(1)$ was proved in Theorem~\ref{t7.3}. The implication $(1)\Ra(4)$ follows from Mihail's Theorem~\ref{Mih}.}

$(1)\Ra(3)$ Assume that $\F$ is topologically contracting. By Theorem~\ref{t7.3}, $\F$ is compactly contracting. To show that $\F$ is locally contracting, fix a compact subset $K\subset X$ and an open cover $\U$ of $X$. Since $\F$ is compact dominating, we can replace $K$ by a larger compact set and assume that $\F(K)\subset K$. By Lemma~\ref{l2.2}, there are finite subfamily $\mathcal E\subset\F$ and $\e>0$ such that for each $x\in K$ the ball $B_{\mathcal E}(x,2\e)$ is contained in some set $U\in\U$. Being topologically contracting, the system $\F$ has an attractor $A_\F$. Consider the open cover $\V=\{X\setminus A_\F\}\cup\{B_{\mathcal E}(x,\e):x\in\A_\F\}$ of $X$. Since $\F$ is compactly contracting, for some $n\in\IN$ the family $\{f(K):f\in\F^{\ge n}\}$ refines the cover $\V$. Let $O_K=B_{\bar d}(K,\e)$ where $\bar d=\max\mathcal E$. We claim that $\{f(O_K):f\in\F^{\ge n}\}\prec\U$. Take any function $f\in\F^{\ge n}$ and consider the set $f(K)$, which contains the set $f(A_\F)\subset A_\F$ and hence cannot be contained in the set $X\setminus A_\F$. By the choice of $n$, the set $f(K)$ is contained in some ball $B_{\bar d}(x,\e)\in\mathcal V$, $x\in A_\F$. Since $\F$ consists of Edelsten contractive maps, $f(O_K)=f(B_{\bar d}(K,\e))\subset B_{\bar d}(f(K),\e)\subset B_{\bar d}(B_{\bar d}(x,\e),\e)\subset B_{\bar d}(x,2\e)\subset U$ for some $U\in\U$.
\smallskip

$(5)\Ra(1)$ Assume that $\F$ is compact-dominating and take any non-empty compact set $K\subset X$ with $\F(K)\subset K$. By Proposition~\ref{p4.2}{(5)}, the function system $\F|K=\{f|K:f\in\F\}$ is Rakotch  (and hence eventually) contracting, and by Theorem~\ref{t4.5}, it is topologically contracting, which implies that for every sequence $(f_n)_{n\in\w}\in\F^\w$ the intersection $\bigcap_{n\in\w}f_0\circ\dots\circ f_n(K)$ is a singleton. This means that $\F$ is topologically contracting.
\end{proof}

\section{Metrization of contracting function systems on Tychonoff spaces}\label{s9}

In this section we address the following problem:

\begin{problem}\label{pr6.1} Detect function systems $\F$ on Tychonoff spaces $X$ which are Edelstein (Matkowski, Krasnoselski\u\i{}, Banach) contracting with respect to a suitable admissible multimetric on $X$.
\end{problem}

\subsection{Constructing contractive pseudometrics}
In this section we shall describe a general construction transforming a continuous pseudometric $d$ on a topological space $X$ endowed with a topologically contracting function system $\F\subset X^X$ into a $k$-continuous pseudometric $\hat d\ge d$ making the function system $\F$ Edelstein contracting.

A pseudometric $d$ of a topological space $X$ will be called {\em $k$-continuous} if for each compact subset $K\subset X$ the restriction $d|K\times K$ is continuous. It is clear that each continuous pseudometric is $k$-continuous. The converse is true if $X$ is a $k$-space. This follows from the fact that the identity map $X\to (X,d)$, being continuous on compacta, is continuous.

The following construction develops the ideas of  Barnsley and Igudesman \cite{BI} who modified metrics to make a given function system non-expanding (i.e., with Lipschitz constant $\le 1$). The same formula for $\hat d$ was independently discovered by {Mihail and} Miculescu \cite{Micu}.

\begin{proposition}\label{p9.1} Let $\F\subset X^X$ be a compactly contracting function system on a topological space $X$ and $(\alpha_n)_{n=0}^\infty$ be a strictly increasing sequence of positive real numbers with $\alpha_0=1$ and $\alpha_\w=\sup_{n\in\w}\alpha_n \le2$. Given a continuous pseudometric $d$ on $X$, consider the pseudometric $\hat d$ defined by
$$\hat d(x,y)=\sup_{n\in\w}\max_{f\in\F^{ n}}\alpha_n d(f(x),f(y)),\;\;x,y\in X.$$
The pseudometric $\hat d$ has the following properties:
\begin{enumerate}
\item $d\leq\hat{d}\leq 2\cdot \diam_d(X)$;
\item $\F$ is Edelstein contracting with respect to the pseudometric $\hat d$;
\item the pseudometric $\hat d$ is $k$-continuous;
\item the pseudometric $\hat d$ is continuous if $\F$ is locally contracting;
\item If $\F$ is {eventually} contracting with respect to the pseudometric $d$, then $\F$ is Krasnoselski\u\i{} contracting with respect to the pseudometric $\hat d$;
\item If $\F$ is globally contracting, then $\F$ is Rakotch contracting with respect to the pseudometric $\hat d$;
\item If $\F$ is globally contracting and the pseudometric $d$ is totally bounded, then $\hat d$ is totally bounded too.
\end{enumerate}
\end{proposition}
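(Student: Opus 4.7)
My plan is to derive all seven items from a single basic computation bounding $\hat d(f(x),f(y))$ by $\hat d(x,y)$ plus a controllable tail of $d$-oscillations of $\F$. Item (1) I read off from the definition: the $n=0$ summand equals $\alpha_0 d(x,y)=d(x,y)$, so $\hat d\ge d$, while every summand is at most $\alpha_\w\cdot\diam_d(X)\le 2\diam_d(X)$.

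For (2), (5), (6) I set $\beta_m=\alpha_{m-1}/\alpha_m\in(0,1)$ and re-index via $m=n+1$ to obtain
$$\hat d(f(x),f(y))=\sup_{m\ge 1}\beta_m\max_{g\in\F^{m-1}}\alpha_m d(g\circ f(x),g\circ f(y))\le\sup_{m\ge 1}\beta_m\max_{h\in\F^m}\alpha_m d(h(x),h(y)).$$
Bounding each $\alpha_m d(h(x),h(y))$ by $\hat d(x,y)$ already yields $\hat d(f(x),f(y))\le\hat d(x,y)$, so $\F$ is $\hat d$-nonexpanding. To obtain genuine contraction I split the sup at a cutoff $M\ge 1$:
$$\hat d(f(x),f(y))\le\max\Big(C_M\hat d(x,y),\ 2\cdot\sup_{m>M}\max_{h\in\F^m}d(h(x),h(y))\Big),\qquad C_M:=\max_{1\le m\le M}\beta_m<1.$$
For (2), compact contractivity applied to $K=\{x,y\}$ and the cover of $X$ by $d$-balls of radius $\e/2$ yields $\max_{h\in\F^n}d(h(x),h(y))\to 0$; when $\hat d(x,y)>0$, choosing $M$ large enough to push the tail below $\hat d(x,y)/2$ produces the Edelstein constant $\lambda=\max(C_M,1/2)<1$, and the case $\hat d(x,y)=0$ is handled by nonexpansiveness. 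For (5) and (6) I replace the tail by the larger quantity $d\w_{\F^m}(d(x,y))\le d\w_{\F^m}(t)$ with $t=\hat d(x,y)$: eventual contractivity of $\F$ w.r.t.\ $d$ gives $d\w_{\F^m}(b)\to 0$, producing $\hat d\w_\F(t)/t\le\max(C_M,1/2)$ on $[a,b]$; global contractivity applied to the $\e/2$-ball cover gives $d\w_{\F^m}(t)\le\e$ \emph{uniformly} in $t$ for large $m$ (since $\diam_d(f(X))\le\e$), promoting the bound to all $t\in[a,\infty)$.

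For (3) I introduce the truncation $\hat d_N(x,y):=\max_{n\le N}\max_{f\in\F^n}\alpha_n d(f(x),f(y))$, a continuous pseudometric as a finite maximum of continuous functions on $X\times X$. Clearly $\hat d_N\nearrow\hat d$ pointwise, and on a compact $K\subset X$ compact contractivity applied to the $\e/2$-ball cover produces $N$ with $\alpha_n d(f(x),f(y))\le 2\e$ for every $x,y\in K$, $f\in\F^n$, $n\ge N$. Hence $\hat d_N\to\hat d$ uniformly on $K\times K$, so $\hat d|K\times K$ is continuous. For (4) the same argument runs on the neighborhood $O_K\times O_K$ produced by local contractivity, giving continuity of $\hat d$ on each such neighborhood and hence at every point.

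For (7), global contractivity applied to the $\e/2$-ball cover yields $N$ such that $\alpha_n d(f(x),f(y))\le\e$ for \emph{all} $x,y\in X$, $f\in\F^n$, $n\ge N$, so $\hat d\le\hat d_{N-1}+\e$ on $X\times X$. Each summand $\alpha_n d(f(\cdot),f(\cdot))$ of $\hat d_{N-1}$ is a totally bounded pseudometric, since a finite $d$-$\delta$-net of the totally bounded set $f(X)$ pulls back (via a set-theoretic right inverse of $f$ on the net) to a $\delta$-net for the pullback; and a finite maximum of totally bounded pseudometrics is totally bounded via the standard pairing of $\delta$-nets together with the triangle inequality. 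Hence $\hat d_{N-1}$ is totally bounded, and any $\delta$-net for $\hat d_{N-1}$ is a $(\delta+\e)$-net for $\hat d$. The principal subtlety throughout is that the ratios $\beta_m$ need not be bounded away from $1$, so no single contraction factor is available globally in $m$, and the tail of the defining sup must be absorbed using contractivity of $\F$ at large iterates.
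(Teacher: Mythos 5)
Your proposal is correct and follows essentially the same route as the paper: in every item the head of the supremum is controlled by the ratios $\alpha_m/\alpha_{m+1}<1$ against $\hat d(x,y)$, while the tail is crushed using the appropriate (compact, local, eventual, global) contractivity, and continuity/total boundedness are handled through the finite truncations $\hat d_N$. Your packaging (a single tail-splitting estimate and uniform approximation by $\hat d_N$, instead of the paper's attained-maximum argument in item (2) and direct $\e$-arguments in (3)--(4)) is only a cosmetic variation.
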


\begin{proof}
1. The inequality $d\le\hat d\le 2\cdot \diam_d(X)$ follows immediately from the definition of the pseudometric $\hat d$ and the equality $\alpha_0=1$.
\smallskip

2. Next, we show that $\F$ is Edelstein contracting with respect to the pseudometric $\hat d$. Given any function $f\in\F$ and points $x,y\in X$ we should find a real number $\lambda<1$ such that $\hat d(f(x),f(y))\le\lambda\cdot \hat d(x,y)$. If $\hat d(f(x),f(y))=0$, then we can take any $\lambda<1$.

So we assume that $\hat d(f(x),f(y))>0$. The compact contractivity of $\F$ implies that for any compact $K\subset X$ we get $\lim_{n\to\infty}\sup_{f\in\F^{\geq n}}\diam_d(f(K))=0$. This implies that $\hat d(f(x),f(y))=\alpha_nd(g\circ f(x),g\circ f(y))$ for some $n\in\w$ and some $g\in\F^{n}$ (which means that in the definition of $\hat{d}$ we can replace ``$\sup$" by ``$\max$"). Since $\alpha_n<\alpha_{n+1}$, we conclude that
$$0<\hat d(f(x),f(y))=\alpha_nd(g\circ f(x),g\circ f(y))<\alpha_{n+1}d(g\circ f(x),g\circ f(y))\le \hat d(x,y).$$
\smallskip

3. To show that the pseudometric $\hat d$ is $k$-continuous, it suffices for any compact set $K\subset X$, point $x\in K$ and $\e>0$ to find a neighborhood {$O_x\subset X$ such that $O_x\cap K\subset B_{\hat d}(x,\e)$}. Since $\F$ is compactly contracting there is a number $n\in\w$ such that
$\sup_{f\in\F^{\ge n}}\diam_{\hat d}(f(K))<\e/2$. By the continuity of the maps $f\in \F^{<n}=\bigcup_{m<n}\F^m$ there is a neighborhood $O_x\subset X$ of $x$ such that $\max_{f\in \F^{<n}}d(f(x),f(y))<\e/2$ for all $y\in O_x$. The choice of the number $n$ guarantees that for every {$y\in O_x\cap K$}, we get
$$\hat d(x,y)=\sup_{m\in\w}\underset{f\in \F^m}{{\max}}\alpha_md(f(x),f(y))<\e.$$

4. Assume that the function system $\F$ is locally contracting. Given any point $x\in X$ and $\e>0$ we need to find a neighborhood $O_x\subset X$ of $x$ such that $O_x\subset B_{\hat d}(x,\e)$.
Since $\F$ is locally contracting, there are a number $n\in\w$ and a neighborhood $U_x\subset X$ such that
$\sup_{f\in\F^{\ge n}}\diam_{\hat d}(f(U_x))<\e/2$. By the continuity of the maps $f\in\bigcup_{m<n}\F^m$ there is a neighborhood $O_x\subset U_x$ of $x$ such that $$\max_{m<n}\max_{f\in \F^m}\alpha_m d(f(x),f(y))<\e$$ for all $y\in O_x$. The choice of the number $n$ guarantees that for every $y\in O_x$ we get
$$\hat d(x,y)=\sup_{m\in\w}\underset{f\in \F^m}{{\max}}\alpha_md(f(x),f(y))<\e.$$
\smallskip

5. Assume that $\F$ is eventually contracting with respect to the pseudometric $d$. To show that $\F$ is Krasnoselski\u\i{} contracting with respect to the pseudometric $\hat d$, we need to check that $\sup_{r\le t\le R}\hat d\w_\F(t)/t<1$ for any positive real numbers $r<R$. The eventual contractivity of $\F$ yields a number $k\in\w$ such that $d\w_{\F^{\ge k}}(R)<\frac14r$. Consider the number
$$\lambda=\max_{m\le k}\frac{\alpha_m}{\alpha_{m+1}}\in[\tfrac12,1).$$
We claim that $\hat d\w_\F(t)/t\le \lambda$ for every $t\in[r,R]$. Assuming that $\hat d\w_\F(t)/t>\lambda$, we can find a function $f\in\F$ and two points $x,y\in X$ such that $\hat d(x,y)\le t$ and $\hat d(f(x),f(y))>\lambda t$. Since $d(x,y)\le\hat d(x,y)\le t\le R$, the choice of $k$ guarantees that $\sup_{g\in\F^{\ge k}}d(g\circ f(x),g\circ f(y))\le\sup_{g\in\F^{> k}}d(g(x),g(y))\le\sup_{g\in\F^{>k}}d\w_g(R)<\frac14r$ and hence
$${
\begin{aligned}
&\sup_{m\ge k}\alpha_m\cdot\max_{g\in\F^m}d(g\circ f(x),g\circ f(y))\le \alpha_\w\cdot\sup_{g\in\F^{\ge k}}d(g\circ f(x),g\circ f(y))\le \frac{2}{4}r\leq\lambda r\le\lambda t
\end{aligned}}
$$and { hence
$$
\begin{aligned}
&\hat{d}(f(x),f(y))=\max_{m<k}\max_{g\in\F^m}\alpha_md(g\circ f(x),g\circ f(y))=\max_{m<k}\frac{\alpha_m}{\alpha_{m+1}}\cdot\max_{g\in\F^{m}}\alpha_{m+1}d(g\circ f(x),g\circ f(y))\le \\
&\lambda
\cdot \max_{m<k}\max_{g\in\F^{m}}\alpha_{m+1}d(g\circ f(x),g\circ f(y))\le \lambda\cdot  \hat d(f(x),f(y))<\hat d(f(x),f(y)).
\end{aligned}
$$}
This contradiction shows that {$\sup_{t\in[r,R]}d\w_\F(t)/t\le\lambda<1$}, which means that $\F$ is Krasnoselski\u\i{} contracting.
\smallskip

6. Assume that $\F$ is globally contracting. To show that $\F$ is Rakotch contracting with respect to the pseudometric $\hat d$, it suffices for every $\delta>0$ to find $\lambda<1$ such that
for every $t\ge\delta$ and points $x,y\in X$ with $\hat d(x,y)\le t$ we get $\hat d(f(x),f(y))\le\lambda\cdot t$ for all $f\in\F$.

Since $\F$ is globally contracting, there is a number $k\in\w$ such that for every $f\in\F^{\ge k}$ the set $f(X)$ has $\diam_d f(X)<\frac{\delta}{4}$. Choose any real number $\lambda\in[\frac12,1)$ such that $\lambda\ge \frac{\alpha_{n}}{\alpha_{n+1}}$ for all $n\le k$. To show that $\lambda$ satisfies our requirements, take any $t\ge\delta$ and points $x,y\in X$ with $\hat d(x,y)\le t$. We need to show that $\hat d(f(x),f(y))\le \lambda t$ for every $f\in\F$. It follows that $\hat d(f(x),f(y))=\alpha_n\cdot d(g\circ f(x),g\circ f(y))$ for some $n\in\w$ and $g\in\F^n$. If $n\le k$, then
$$\hat d(f(x),f(y))=\alpha_n d(g\circ f(x),g\circ f(y))=\frac{\alpha_n}{\alpha_{n+1}}\alpha_{n+1}d(g\circ f(x),g\circ f(y))\le \lambda \hat d(x,y)\le\lambda t.$$
If $n>k$, then $$\hat d(f(x),f(y))=\alpha_n\cdot d(g\circ f(x),g\circ f(y))\le \alpha_\w\cdot \frac{\delta}{4}\le\frac12\delta\le\lambda t$$by the choice of $k$.
\smallskip

7. Finally assume that $\F$ is globally contracting and the pseudometric $d$ is totally bounded. Given any $\e>0$, we need to find a finite subset $F\subset X$ such that $X=\bigcup_{x\in F}B_{\hat{d}}(x,\e)$. By the global contractivity of $\F$, there is a number $k\in\IN$ such that for each function $f\in\F^{\ge k}$ the set $f(X)$ has diameter $\diam_d(f(X))<\frac{\e}{8}$. The total boundedness of the pseudometric $d$ implies the total boundedness of the pseudometric $\hat d_k(x,y)=\max_{n< k}\max_{f\in\F^n}\alpha_nd(f(x),f(y))$.
Consequently, there is a finite set $F\subset X$ such that $X=\bigcup_{x\in F}B_{\hat d_k}(x,\e/2)$. Since
$|\hat d-\hat d_k|\le \frac\e4$, we conclude that $X=\bigcup_{x\in F}B_{\hat d}(x,\e)$, which means that the metric $\hat d$ is totally bounded.
\end{proof}

\subsection{Metrization of locally and topologically  contracting function systems}

\begin{theorem}\label{t9.2} For a compact-dominating function system $\F$ on a Tychonoff space $X$ the following conditions are equivalent:
\begin{enumerate}
\item $\F$ is locally contracting;
\item $\F$ is Edelstein  contracting with respect to some admissible multimetric $\mathcal D$ on $X$;
\item $\F$ is Edelstein  contracting with respect to some  admissible bounded multimetric $\mathcal D$ on $X$ of cardinality $|\mathcal D|=\mn(X)$.
\end{enumerate}
If $X$ is a $k$-space, then the conditions \textup{(1)---(3)} are equivalent to
\begin{itemize}
\item[(4)] $\F$ is topologically contracting;
\item[(5)] $\F$ is compactly contracting.
\end{itemize}
If the Tychonoff space $X$ is sequentially complete, then the conditions \textup{(1)--(3)} are equivalent to
\begin{itemize}
\item[(6)] $\F$ is Edelstein  contracting with respect to some sequentially complete admissible bounded multimetric $\mathcal D$ on $X$.
\end{itemize}
\end{theorem}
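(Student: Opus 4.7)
The plan is to handle the two ``trivial'' implications first, then route the work through Proposition~\ref{p9.1}. The implication $(3)\Ra(2)$ is immediate. For $(2)\Ra(1)$, I would invoke Theorem~\ref{t4.7}: under (2), $\F$ is Edelstein contracting on the multimetric space $(X,\mathcal D)$ and is compact-dominating by hypothesis, so Theorem~\ref{t4.7} gives that $\F$ is locally contracting with respect to the canonical topology of $(X,\mathcal D)$, which by admissibility coincides with the topology of $X$.

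The heart of the proof is $(1)\Ra(3)$. Fix an admissible multimetric $\mathcal D_0$ on $X$ with $|\mathcal D_0|=\mn(X)$, and replace each $d\in\mathcal D_0$ by the truncation $\bar d=\min\{d,1\}$, which is still a continuous pseudometric generating the same topology (so the family $\{\bar d:d\in\mathcal D_0\}$ remains admissible) and now bounded. Apply Proposition~\ref{p9.1} to each $\bar d$ to produce $\hat{\bar d}$. Item (1) of Proposition~\ref{p9.1} gives $\hat{\bar d}\le 2$, so the resulting family is bounded; item (2) gives Edelstein contractivity of $\F$; and item (4), whose hypothesis is precisely local contractivity of $\F$, gives continuity of $\hat{\bar d}$ on $X$. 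Since $\hat{\bar d}\ge \bar d$ and each $\hat{\bar d}$ is continuous, the family $\mathcal D:=\{\hat{\bar d}:d\in\mathcal D_0\}$ generates a topology which is simultaneously finer than and coarser than the topology of $X$, hence equal to it. Cardinality bookkeeping is automatic: $|\mathcal D|\le|\mathcal D_0|=\mn(X)\le|\mathcal D|$, with the last inequality holding because $\mathcal D$ is admissible.

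The $k$-space addendum follows by concatenation of earlier results: $(1)\Leftrightarrow(4)$ is Theorem~\ref{t5.3}, and $(4)\Leftrightarrow(5)$ is Theorem~\ref{t7.3}. For the sequentially complete addendum, $(6)\Ra(2)$ is trivial. For $(1)\Ra(6)$, I would repeat the $(1)\Ra(3)$ construction but starting with the \emph{universal} multimetric $\mathcal D_X$ of all continuous pseudometrics on $X$. The resulting $\widehat{\mathcal D}^{\,b}_X=\{\hat{\bar d}:d\in\mathcal D_X\}$ is admissible, bounded, and Edelstein contracting by the same arguments as above. For sequential completeness, let $(x_n)$ be $\widehat{\mathcal D}^{\,b}_X$-Cauchy; since $\hat{\bar d}\ge\bar d$, the sequence is $\bar d$-Cauchy for every $d\in\mathcal D_X$, and the elementary equivalence between $\bar d$-Cauchyness and $d$-Cauchyness (via the truncation at $1$) shows that $(x_n)$ is $\mathcal D_X$-Cauchy; the hypothesis that $X$ is sequentially complete as a Tychonoff space then delivers a limit in $X$.

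The main technical obstacle is already absorbed by Proposition~\ref{p9.1}: the delicate point is exactly why \emph{local} contractivity (rather than merely compact contractivity) is required in (1), and this is because item (3) of Proposition~\ref{p9.1} only yields $k$-continuity of $\hat d$, whereas genuine continuity, needed to ensure admissibility of $\{\hat{\bar d}\}$ on a general Tychonoff (non-$k$) space, is item (4) and demands local contractivity. Beyond this, the only points needing care are the cardinality bookkeeping in $(1)\Ra(3)$ and the inequality $\hat{\bar d}\ge\bar d$ in $(1)\Ra(6)$ which is what transfers Cauchyness back to the universal multimetric.
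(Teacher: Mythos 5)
Your proposal is correct and follows essentially the same route as the paper: the core implications $(1)\Rightarrow(3)$ and $(1)\Rightarrow(6)$ (the latter via the universal multimetric $\mathcal D_X$) are obtained exactly as in the paper from Proposition~\ref{p9.1}(1),(2),(4), the admissibility argument via $\hat d\ge d$ plus continuity, the Cauchy-transfer through the truncation, and the $k$-space addendum via Theorems~\ref{t5.3} and~\ref{t7.3}. The only difference is cosmetic: for $(2)\Rightarrow(1)$ you cite Theorem~\ref{t4.7} directly (which indeed lists local contractivity among its equivalent conditions, so this is legitimate), whereas the paper re-derives local contractivity by hand using the attractor, Lemma~\ref{l2.2} and the Edelstein non-expansiveness.
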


\begin{proof} The implication $(3)\Ra(2)$ is trivial.

$(2)\Ra(1)$. Given a compact set $K\in\K(X)$ and an open cover $\U$ of $X$, we need to find a neighborhood $O_K\subset X$ and a number ${n}\in\IN$ such that $\{f(O_K):f\in\F^{\ge n}\}\prec\U$. By Theorem~\ref{t4.7}, $\F$ has an attractor $A_\F$, and we can assume that $A_\F\subset K$ and $\F(K)\subset K$. By Lemma~\ref{l2.2}, there are a finite subfamily $\mathcal E\subset\mathcal D$ and $\e>0$ such that each ball $B_{\mathcal E}(x,2\e)$, $x\in A_\F$, is contained in some set $U\in\U$. By Proposition~\ref{p4.2}, the restriction $\F|K$ is Matkowski contracting, which implies that $\hat d\w_{\F|K}^n\to 0$ for the pseudometric $\hat d=\max {\mathcal E}$.
Then we can find $n\in\w$ such that $\hat d\w_{\F|K}^n(\diam_{\hat d}(K))<\e$. It follows that for every $m\ge n$ and $f\in \F^m$ the set $f(K)$ has $\hat d$-diameter
$$\diam_{\hat d}(f(K))\le\hat d\w_{\F|K}^m(\diam_{\hat d}(K))<\e.$$
Since $f(A_\F)\subset \F^m(A_\F)=A_\F$, the set $f(K)$ is contained in the ball $B_{\hat d}(x,\e)$ for any point $x\in f(A_\F)\subset A_\F$. Now consider the neighborhood
$$O_K=B_{\hat d}(K,\e):=\{x\in X:\inf_{y\in K}\hat d(x,y)<\e\}$$ of $K$ and observe that for every $f\in\F^{\ge n}$ we get
$$f(O_K)= f(B_{\hat d}(K,\e))\subset B_{\hat d}(f(K),\e)\subset B_{\hat d}(B_{\hat d}(x,\e),\e)\subset B_{\hat d}(x,2\e)=B_{\mathcal E}(x,2\e)$$ and hence
$$\{f(O_K):f\in \F^{\ge n}\}\prec\{B_{\mathcal{E}}(x,2\e):x\in A_\F\}\prec\U.$$
\smallskip

$(1)\Ra(3)$. Fix any bounded admissible multimetric $\mathcal D$ of cardinality $|\mathcal D|=\mn(X)$ on $X$. By Proposition~\ref{p9.1}, for every pseudometric $d\in\mathcal D$ there is a bounded continuous pseudometric $\hat d\ge d$ on $X$ with respect to which the function family $\F$ is Edelstein contracting. Then $\hat{\mathcal D}=\{\hat d:d\in\mathcal D\}$ is an admissible bounded multimetric on $X$ of cardinality $\mn(X)$ (because $\mn(X)\le |\hat{\mathcal D}|\le|\mathcal D|=\mn(X)$) making the function system $\F$ Edelstein contracting.
\smallskip

By Theorem~\ref{t7.3} the conditions (4) and (5) are equivalent. If $X$ is a $k$-space, then the equivalence $(1)\Leftrightarrow(5)$ follows from Theorem~\ref{t5.3}.
\smallskip

Now assuming that $X$ is sequentially complete, we shall prove the equivalence of the conditions {$(6)$} and $(2)$. In fact, the implication ${(6)}\Ra(2)$ is trivial.
To prove that $(2)\Ra{(6)}$, consider the universal multimetric $\mathcal D_X$ on $X$. By the sequential completeness of $X$, the multimetric space $(X,\mathcal D_X)$ is sequentially complete.
By Proposition~\ref{p9.1}, for every pseudometric $d\in\mathcal D_X$ we can find a bounded continuous pseudometric
$\hat d\ge \min\{1,d\}$ on $X$ with respect to which $\F$ is Edelstein contracting. Then $\hat{\mathcal D}=\{\hat d:d\in\mathcal D_X\}$ is an admissible {bounded} multimetric on $X$ making the function system $\F$ Edelstein contracting. For every $d\in\mathcal D_X$ the inequality $\hat d\ge \min\{1,d\}$ implies that each Cauchy sequence $(x_n)$ in the multimetric space $(X,\hat{\mathcal D})$ remains Cauchy with respect to the multimetric $\mathcal D$. The sequential completeness of the multimetric $\mathcal D_X$ guarantees that the Cauchy sequence $(x_n)$ converges in $X$, witnessing that the multimetric space $(X,\hat{\mathcal D})$ is sequentially complete.
\end{proof}

A topological space is called {\em completely metrizable} if its topology is generated by a complete metric. Proposition~\ref{p9.1} implies:

\begin{corollary} Each topologically contracting function system $\F$ on a (completely) metrizable topological space $X$ is Edelstein contracting with respect to some (complete) admissible metric on $X$.
\end{corollary}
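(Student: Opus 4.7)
The plan is to invoke Theorem~\ref{t9.2} to promote ``topologically contracting'' to ``locally contracting'' (so that clause~(4) of Proposition~\ref{p9.1} becomes available), and then to apply Proposition~\ref{p9.1} with a single admissible metric. Since $X$ is metrizable we have $\mn(X)=1$, so everything in Proposition~\ref{p9.1} may be run with an honest admissible metric rather than a family of pseudometrics.

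First I would record that a metrizable space is first countable, hence a $k$-space, and that a topologically contracting $\F$ is compact-dominating by the very definition. Theorem~\ref{t9.2} (the equivalences available on $k$-spaces) then tells us that $\F$ is locally contracting.

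Next I would pick an admissible metric $d$ on $X$; replacing $d$ by $\min\{1,d\}$ I may assume that it is bounded. In the completely metrizable case I would start from a complete admissible metric, and note that the truncation $\min\{1,d\}$ is still complete and admissible, since it is uniformly equivalent to $d$. Feeding this $d$ into Proposition~\ref{p9.1} produces a pseudometric $\hat d$ with $d\le\hat d\le 2$, continuous on $X\times X$ by clause~(4), and making $\F$ Edelstein contracting by clause~(2). Because $d$ separates points and $\hat d\ge d$, the pseudometric $\hat d$ is in fact a metric; the inequality $\hat d\ge d$ shows that the $\hat d$-topology refines the $d$-topology, while continuity of $\hat d$ shows that every $d$-convergent sequence is $\hat d$-convergent, so the two topologies coincide and $\hat d$ is admissible. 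This already settles the metrizable case.

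For the completely metrizable case it remains to transfer completeness from $d$ to $\hat d$. Any $\hat d$-Cauchy sequence is $d$-Cauchy (as $d\le\hat d$), hence has a $d$-limit $x_\infty$ by completeness of $d$; since $\hat d$ is admissible, $x_n\to x_\infty$ in $\hat d$ as well, so $\hat d$ is complete. The whole argument is thus just an unpacking of Proposition~\ref{p9.1} in the single-metric setting; the only step requiring a moment's care is the completeness transfer, which is immediate from the double inequality $d\le\hat d$ combined with the admissibility of $\hat d$.
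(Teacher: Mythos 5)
Your proposal is correct and follows essentially the same route as the paper, which derives the corollary directly from Proposition~\ref{p9.1}: since a metrizable space is a $k$-space, topological contractivity gives local (and compact) contractivity, and applying Proposition~\ref{p9.1} to a bounded admissible (complete) metric $d$ yields a continuous metric $\hat d\ge d$ making $\F$ Edelstein contracting, with admissibility and completeness transferred exactly as you describe. The only remark is that your detour through Theorem~\ref{t9.2} to get local contractivity could be replaced by a direct citation of Theorem~\ref{t5.3}, but this is a cosmetic difference.
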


{
The following example shows, among other items, that in the above corollary, we can not replace ``Edelstein'' by ``eventually''. In particular, in the condition (3) of Theorem \ref{t9.2} we also cannot replace ``Edelstein'' by ``eventually''.}

\begin{example}\label{fex} On the countable product of lines $\IR^\w$ consider the  function $f:\IR^\w\to\IR^\w$, $f:(x_n)_{n\in\w}\mapsto (\frac12x_n)_{n\in\w}$. The function system $\F=\{f\}$ has the following properties:
\begin{enumerate}
\item $\F$ is locally contracting;
\item $\F$ is not totally contracting;
\item $\F$ is Banach  contracting for the admissible multimetric $\mathcal D=\{d_n\}_{n\in\w}$ on $\IR^\w$ consisting of the pseudometrics $d_n((x_k)_{k\in\w},(y_k)_{k\in\w})=|x_n-y_n|$, $n\in\w$;
\item $\F$ is Edelstein contracting for some admissible metric on $\IR^\w$;
\item $\F$ is eventually contracting for no continuous metric on $\IR^\w$.
\end{enumerate}
\end{example}

\begin{proof} The Banach contractivity of $f$ with respect to the pseudometrics $d_n$ trivially follows from the definitions. Theorem~\ref{t9.2} implies that $\{f\}$ is locally contracting and is Edelstein contracting for some admissible metric on $\IR^\w$.

To see that $\{f\}$ is not totally contracting, take any neighborhood $U\subset\IR^\w$ of the fixed point $\mathbf 0$ of $f$. Then $U$ contains the set $A=\{0\}^{n_0}\times\IR^{\w\setminus n_0}$ for some $n_0\in\w$. Since $f(A)=A$, we have that $A\subset f^n(U)$ for any $n\in\N$, so the open cover $\U=\{(-k,k)^{n_0+1}\times \IR^{\w\setminus n_0+1}:k\in\N\}$ witnesses to the fact that $\F$ is not totally contracting.

To prove the final statement, assume that the map $f$ is eventually contracting for some continuous metric $d$ on $X$. By the continuity of the metric $d$ and the definition of the Tychonoff product topology on $\IR^\w$, there is a number $n\in\w$ such that the set $A=\{0\}^n\times\IR^{\w\setminus n}$ is contained in the 1-ball $B_d(\mathbf 0,1)$ centered at the fixed point $\mathbf 0=(0,0,\dots)$ of the map $f$. Choose $\e>0$ so small that $A\not\subset B_d(\mathbf 0,\e)$. Since $\lim_{k\to\infty}d\w_{f^k}(1)\to 0$, there is $m\in\w$ so large that $d\w_{f^m}(1)<\e/2$. Then $A=f^m(A)\subset f^m(B_d(\mathbf 0,1))\subset B_d(\mathbf 0,2\cdot d\w_{f^m}(1))\subset B_d(\mathbf 0,\e)$, which contradicts the choice of $\e$.
\end{proof}

\begin{problem} Is the statement (2) of Theorem \ref{t9.2} equivalent to one of the statements:
\begin{itemize}
\item[$(2a)$] $\F$ is eventually contracting with respect to some admissible multimetric $\mathcal D$ on $X$;
\item[$(2b)$] $\F$ is Matkowski contracting with respect to some admissible multimetric $\mathcal D$ on $X$; or
\item[$(2c)$] $\F$ is Banach contracting with respect to some admissible multimetric $\mathcal D$ on $X$?
\end{itemize}
\end{problem}

\subsection{Metrization of globally contracting function systems}

In this section we consider the problem of detecting function systems which are Matkowski or Rakotch contracting with respect to some admissible (totally) bounded multimetric. Let us recall that a multimetric $\mathcal D$ on a set $X$ is {\em totally bounded} if each pseudometric $d\in\mathcal D$ is totally bounded.

\begin{theorem}\label{ft1} For a compact-dominating function system $\F$ on a Tychonoff space $X$ the following conditions are equivalent:
\begin{enumerate}
\item $\F$ is globally contracting;
\item $\F$ is Rakotch contractive with respect to some bounded admissible multimetric $\D$ on $X$ with $|\D|=\mn(X)$;
\item $\F$ is Rakotch contractive with respect to some totally bounded admissible multimetric $\D$ on $X$ with $|\D|=\tmn(X)$;
\item $\F$ is eventually contractive with respect to some bounded admissible multimetric $\mathcal D$ on $X$.
\end{enumerate}
If the Tychonoff space $X$ is sequentially complete, then the conditions \textup{(1)--(4)} are equivalent to:
\begin{itemize}
\item[(5)] $\F$ is Rakotch contractive with respect to some sequentially complete bounded admissible multimetric $\D$ on $X$.
\end{itemize}
\end{theorem}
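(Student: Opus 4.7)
The plan is to establish (1) $\Leftrightarrow$ (2), (3), (4) by cycling through (4), and then handle (5) separately under the sequential completeness hypothesis. For (1) $\Ra$ (2) and (1) $\Ra$ (3), I would start with a bounded (resp.\ totally bounded) admissible multimetric $\D$ of cardinality $\mn(X)$ (resp.\ $\tmn(X)$). Since $\F$ is globally contracting it is also locally contracting, so Proposition~\ref{p9.1} yields for each $d \in \D$ a bounded continuous pseudometric $\hat d \ge d$ making $\F$ Rakotch contracting, with $\hat d$ totally bounded whenever $d$ is. The family $\hat\D = \{\hat d : d \in \D\}$ is then an admissible bounded (resp.\ totally bounded) multimetric with $\mn(X) \le |\hat\D| \le |\D|$ (resp.\ $\tmn(X) \le |\hat\D| \le |\D|$), which will force the required equalities. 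The implication (1) $\Ra$ (4) will follow by combining (1) $\Ra$ (2) with the chain Rakotch $\Ra$ eventual from Proposition~\ref{p4.2}. In the reverse direction, (2) $\Ra$ (1) and (3) $\Ra$ (1) will both reduce to (4) $\Ra$ (1) via the same implication (using for (3) that totally bounded is bounded).

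The main obstacle will be proving (4) $\Ra$ (1). Assuming $\F$ is eventually contracting with respect to a bounded admissible multimetric $\D$, the compact-dominating hypothesis together with Theorem~\ref{t4.5} will supply the (compact) attractor $A_\F$. Given any open cover $\U$ of $X$, I would apply Lemma~\ref{l2.2} to the compact set $A_\F$ to obtain a finite subfamily $\G \subset \D$ and $\e > 0$ such that $B_\G(x, \e)$ lies in some $U \in \U$ for each $x \in A_\F$. Setting $\bar d := \max \G$, the inequality $\bar d\w_{\F^n}(t) \le \max_{d \in \G} d\w_{\F^n}(t) \to 0$ lets me fix $n$ with $\bar d\w_{\F^n}(\diam_{\bar d} X) < \e$. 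For any $f \in \F^{\ge n}$, factoring $f = g \circ h$ with $g \in \F^n$ yields $f(X) \subset g(X)$, hence $\diam_{\bar d}(f(X)) \le \bar d\w_{\F^n}(\diam_{\bar d} X) < \e$. Since $\F^n(A_\F) = A_\F$, the set $f(A_\F)$ is a nonempty subset of $A_\F \cap f(X)$, so picking any $y \in f(A_\F)$ gives $f(X) \subset B_{\bar d}(y, \e) = B_\G(y, \e) \subset U$ for some $U \in \U$. Thus $\{f(X) : f \in \F^{\ge n}\} \prec \U$, confirming global contractivity.

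For the sequentially complete case, (5) $\Ra$ (1) will be immediate through the chain (5) $\Ra$ (4) $\Ra$ (1). For (1) $\Ra$ (5), I would take the universal (admissible, sequentially complete) multimetric $\D_X$ on $X$ and apply Proposition~\ref{p9.1} to $\min\{1, d\}$ for every $d \in \D_X$, obtaining a bounded continuous pseudometric $\hat d \ge \min\{1, d\}$ with respect to which $\F$ is Rakotch contracting. The family $\hat\D = \{\hat d : d \in \D_X\}$ will be admissible because $B_{\hat d}(x, s) \subset B_d(x, s)$ for $s \le 1$, and sequentially complete because any $\hat\D$-Cauchy sequence is $\D_X$-Cauchy by $\hat d \ge \min\{1, d\}$, hence converges in $X$. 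This final verification parallels the argument for (2) $\Ra$ (6) in the proof of Theorem~\ref{t9.2}.
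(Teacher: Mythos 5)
Your proposal is correct and follows essentially the same route as the paper: Proposition~\ref{p9.1} (with global $\Ra$ local contractivity for continuity, and parts (6),(7) for Rakotch and total boundedness) gives (1)$\Ra$(2),(3), the implications to (4) are trivial, (4)$\Ra$(1) uses the attractor from Theorem~\ref{t4.5} together with Lemma~\ref{l2.2} and the pseudometric $\bar d=\max\G$, and the sequentially complete case uses the universal multimetric with $\hat d\ge\min\{1,d\}$ exactly as in Theorem~\ref{t9.2}(6). No gaps to report.
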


\begin{proof}
$(1)\Ra(2)$ (and $(1)\Ra(3)$). Assume that the function system $\F$ is globally contracting. Fix a (totally) bounded admissible multimetric $\mathcal D$ on $X$ of cardinality $\mn(X)$ (resp. $\tmn(X)$).

{By Proposition \ref{p9.1}(6,7), there is a family $\hat{\D}$ of continuous (totally) bounded pseudometrics such that $\F$ is Rakotch contractive with respect to $\hat{D}$, and $|\hat{D}|=\mn(X)$ (resp. $|\hat{D}|=\tmn(X)$).}
\smallskip

The implications $(2)\Ra(4)\Leftarrow(3)$ are trivial.
\smallskip

$(4)\Ra(1)$ Assume that $\F$ is eventually contracting with respect to some bounded admissible multimetric $\mathcal D$ on $X$. Given an open cover $\U$ of $X$, we need to find a number $n\in\w$ such that $\{f(X):f\in\F^{\ge n}\}\prec\U$. By Theorem~\ref{t4.5}, $\F$ has an attractor $A_\F$ and by Lemma~\ref{l2.2}, we can find a finite subfamily $\mathcal E\subset\mathcal D$ and $\e>0$ such that for each $x\in A_\F$ the ball $B_{\mathcal E}(x,\e)$ is contained in some set $U\in\U$. Since $\mathcal D$ is a bounded multimetric on $X$, the pseudometric $\bar d=\max\mathcal E$ on $X$ is bounded.

Since $\F$ is eventually contracting, for every $d\in \mathcal E$ there is $n_d\in\w$ such that $d\w_f(\diam_{\bar d}(X))<\e$ for every $f\in\F^{\ge n_d}$. Then for $n=\max_{d\in\mathcal E}n_d$ and any function $f\in\F^{\ge n}$ we get $\diam_{\bar d}(f(X))\le \max_{d\in\mathcal E}d\w_f(\diam_d(X))<\e$. Consequently, for every $x\in {f(A_\F)\subset f(X)\cap A_\F}$, we get $f(X)\subset
B_{\bar d}(x,\e)=B_{\mathcal E}(x,\e)\subset U$ for some $U\in\U$. This means that the function system $\F$ is globally contracting.
\smallskip

Now assuming $X$ is sequentially complete, we shall prove the implications $(1)\Ra(5)\Ra(4)$. In fact, $(5)\Ra(4)$ is trivial. It remains to prove $(1)\Ra (5)$. By Proposition~\ref{p9.1}(6), for every continuous pseudometric
$d\in\mathcal D_X$ on $X$, there is a bounded continuous pseudometric $\hat d\ge \min\{1,d\}$ making the function system $\F$ Rakotch contracting. Then $\F$ is Rakotch contracting with respect the bounded admissible pseudometric
$\hat{\mathcal D}=\{\hat d:d\in\mathcal D_X\}$. Repeating the argument from the proof of Theorem~\ref{t9.2}(6), we can show that the sequential completeness of the universal multimetric $\mathcal D_X$ of $X$ implies the seqeuntial completeness of the multimetric $\hat{\mathcal D}$.
\end{proof}

Taking into account the equivalence of various contractivity properties on compact spaces and applying Theorem~\ref{ft1}, we get:

\begin{theorem}\label{comp} For a function system $\F$ on a compact topological space $X$ the following conditions are equivalent:
\begin{enumerate}
\item $\F$ is topologically contracting;
\item $\F$ is compact{ly} contracting;
\item $\F$ is locally contracting;
\item $\F$ is totally contracting;
\item $\F$ is globally contracting;
\item $\F$ is eventually contracting with respect to some admissible multimetric $\mathcal D$ on $X$;
\item $\F$ is Edelstein contracting with respect to some admissible multimetric $\mathcal D$ on $X$;
\item $\F$ is Rakotch contracting with respect to some admissible multimetric $\mathcal D$ on $X$ with $|\mathcal D|=\mn(X)$;
\end{enumerate}
\end{theorem}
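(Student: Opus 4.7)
The plan is to deduce Theorem~\ref{comp} as a synthesis of three earlier results: Theorem~\ref{t7.3}, Theorem~\ref{t9.2}, and Theorem~\ref{ft1}. The key enabling observation is that on a compact space $X$ the system $\F$ is automatically compact-dominating: take $C:=X$, so that $\F(C)=\F(X)\subset X=C$. Consequently every hypothesis in Theorems~\ref{t9.2} and~\ref{ft1} that requires compact domination is free, and by Proposition~\ref{ff2} the canonical multimetric of any admissible family on $X$ is even sequentially complete, although we do not need that strengthening here.

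First I would close the loop of topological conditions (1)--(5). The chain $(5)\Ra(4)\Ra(3)\Ra(2)$ is the one recorded after the definition of the four notions, and Theorem~\ref{t7.3} supplies $(1)\Leftrightarrow(2)$. For the missing implication $(2)\Ra(5)$ on compact $X$, I would argue directly: given an open cover $\U$ of $X$, apply the definition of compact contractivity with the compact set $K:=X$ to obtain $n\in\w$ with $\{f(X):f\in\F^{\ge n}\}\prec\U$; this is precisely global contractivity. Combined with the trivial chain this shows (1)--(5) are mutually equivalent. (This is the content of the ``Moreover, for a function system $\F$ on a compact Hausdorff space \dots{} all these contractivity properties are equivalent'' remark made earlier in Section~2.)

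For the metric-side conditions (6)--(8) I would invoke the two metrization theorems. Since $X$ is compact it is in particular a $k$-space, so Theorem~\ref{t9.2} applies and gives $(3)\Leftrightarrow(7)$ via its implication $(1)\Leftrightarrow(2)$. Theorem~\ref{ft1} applies (its compact-domination hypothesis being free) and gives $(5)\Leftrightarrow(6)\Leftrightarrow(8)$; the cardinality bound $|\mathcal D|=\mn(X)$ demanded in condition (8) of Theorem~\ref{comp} is exactly what clause (2) of Theorem~\ref{ft1} provides. Since (3) and (5) already lie in the equivalence class of the topological conditions, all eight conditions are now linked.

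There is really no serious obstacle; the argument is a routine composition of equivalences already proved. The only point requiring mild care is the bookkeeping of cardinality constraints — condition (8) asks for $|\mathcal D|=\mn(X)$, which is supplied precisely by Theorem~\ref{ft1}(2) rather than its totally bounded variant (3), and the cardinality-free admissibility demanded in condition (7) is delivered by the implication $(1)\Ra(2)$ of Theorem~\ref{t9.2} without needing its sharpened variant (3).
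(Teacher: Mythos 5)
Your argument is correct and is essentially the paper's own proof, which likewise just assembles Theorem~\ref{t7.3}, the remark that the four topological contractivity notions coincide on compact Hausdorff spaces, and the metrization theorems (the paper handles condition (7) via the compact-space equivalence of Edelstein and Rakotch contractivity together with Theorem~\ref{ft1} rather than via Theorem~\ref{t9.2}, an immaterial difference, and your direct verification of $(2)\Rightarrow(5)$ with $K=X$ is exactly the content of that remark). One bookkeeping point you should add: clauses (2) and (4) of Theorem~\ref{ft1} concern \emph{bounded} admissible multimetrics, while conditions (6) and (8) of Theorem~\ref{comp} do not, so for the implications $(6)\Rightarrow(5)$ and $(8)\Rightarrow(5)$ you need the one-line observation that on a compact space every continuous pseudometric, hence every admissible multimetric, is automatically bounded (alternatively, close the loop through Theorem~\ref{t4.5}, whose condition (6) holds trivially with $K=X$).
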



\subsection{Remetrization of eventually contracting function systems}

In this section we are interested in characterizing function systems which are eventually or Krasnoselski\u\i{} contracting with respect to some admissible multimetric. For function systems on metric spaces the necessary condition is the total contractivity.

\begin{theorem}\label{t6.9} Each eventually contracting function system $\F$ on a {complete} metric space $(X,d)$ is totally contracting.
\end{theorem}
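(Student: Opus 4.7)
My plan is to extract an attractor $A_\F$ from Theorem~\ref{t4.5}, enlarge the given compact set $K$ via compact-dominacy to contain the attractor and be forward invariant under $\F$, and then take the neighborhood $O_K$ to be an ordinary $d$-ball of radius $1$ around $K$, whose diameter is therefore finite. Then the eventual contractivity will shrink the images $f(O_K)$ below any Lebesgue-number associated to $A_\F$, and because the attractor is invariant under each $f\in\F^{<\w}$, these images will always meet $A_\F$ and so fit into single members of $\U$.

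More precisely, viewing $(X,d)$ as a sequentially complete multimetric space with $\mathcal D=\{d\}$, Theorem~\ref{t4.5} applies to the eventually contracting system $\F$ and yields an attractor $A_\F$ together with compact-dominacy. Given a compact $K\subset X$, replace it by a larger compact set, still denoted $K$, satisfying $A_\F\subset K$ and $\F(K)\subset K$, and set $O_K=B_d(K,1)$, so that $D:=\diam_d(O_K)\le\diam_d(K)+2<\infty$. For an arbitrary open cover $\U$ of $X$, the standard Lebesgue-number argument applied to the compact set $A_\F$ and the cover $\U$ supplies an $\e>0$ such that for every $a\in A_\F$ the ball $B_d(a,\e)$ is contained in some $U\in\U$. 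The eventual contractivity of $\F$ then yields $N\in\IN$ with $d\w_{\F^n}(D)<\e$ for all $n\ge N$; hence for every $f\in\F^{\ge N}$ we have $\diam_d(f(O_K))\le d\w_f(D)\le d\w_{\F^{\ge N}}(D)<\e$.

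To conclude, pick any $a\in A_\F\subset K\subset O_K$. The identity $\F(A_\F)=A_\F$ gives $f(A_\F)\subset A_\F$ for every $f\in\F$, and by induction for every $f\in\F^{<\w}$, so that $f(a)\in f(O_K)\cap A_\F$. The diameter bound on $f(O_K)$ then places $f(O_K)\subset B_d(f(a),\e)\subset U$ for some $U\in\U$ by the choice of $\e$, which witnesses total contractivity with the neighborhood $O_K$ chosen uniformly for $K$ independently of $\U$. The only nontrivial point is why $f(O_K)$ must meet $A_\F$ rather than drift away under iteration: this is precisely the $\F$-invariance of the attractor combined with the arrangement $A_\F\subset K\subset O_K$, and it is what lets the oscillation-shrinking argument actually anchor the images inside single pieces of $\U$.
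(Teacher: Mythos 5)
Your proposal is correct and follows essentially the same route as the paper's proof: invoke Theorem~\ref{t4.5} to obtain the attractor $A_\F$, enlarge $K$ so that $A_\F\subset K$, fix the cover-independent neighborhood $O_K=B_d(K,1)$ of finite diameter, use compactness of $A_\F$ to get a Lebesgue-type $\e$, shrink $\diam_d(f(O_K))$ below $\e$ by eventual contractivity, and anchor $f(O_K)$ in a single member of $\U$ via $f(A_\F)\subset A_\F$. The extra requirement $\F(K)\subset K$ is harmless but not needed; otherwise the argument matches the paper's.
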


\begin{proof} Given any compact subset $K\subset X$ { we need to find a neighborhood $O_K\subset X$ of $K$ such that for any open cover $\U$ of $X$, there is} a number $n\in\IN$ such that $\{f(O_K):f\in\F^{\ge n}\}\prec\U$.
{By Theorem~\ref{t4.5}, the function system $\F$ has an attractor $A_\F$. Clearly, we can assume that $A_\F\subset K$.
Let $O_K=\{x\in X:\inf_{y\in K}d(x,y)<1\}$ be the 1-neighborhood of $K$ in $X$.} By the compactness of $A_\F$ we can find $\e>0$ such that each ball $B_d(x,\e)$, $x\in A_\F$, is contained in some set $U\in\U$. Since $\F$ is eventually contracting, there is $n\in\IN$ such that $d\w_{\F^{\ge n}}(\diam_d(K)+2)<\e$. Then for every $f\in\F^{\ge n}$ the set $f(O_K)$ has diameter {$\diam_d(f(O_K))\le d\w_{\F^{\geq n}}(\diam_d(O_K))\le d\w_{\F^{\geq n}}(\diam_d(K)+2)<\e$} and hence is contained in some ball $B_d(x,\e)$ centered at a point {$x\in f(A_\F)\subset A_\F\cap f(O_K)$.} Consequently,
$\{f(O_K):f\in\F^{\ge n}\}\prec\{B_d(x,\e):x\in A_\F\}\prec\U.$
\end{proof}

We do not know if Theorem~\ref{t6.9} can be reversed.

\begin{problem}\label{c10.1} Is each totally contracting function system $\F$ on a metrizable space $X$ eventually contracting with respect to some admissible (multi)metric on $X$?
\end{problem}

In the following theorem for a cover $\U$ of a space $X$ and a subset $A\subset X$ by $\St(A,\U)=\bigcup\{U\in\U:A\cap U\ne\emptyset\}$ we denote the $\U$-{\em star} of $A$ and $\St(\U)=\{\St(U,\U):U\in\U\}$ the {\em star} of the cover $\U$.

\begin{theorem}
For a compact-dominating function system $\F$ on a Tychonoff space $X$ the following conditions are equivalent:
\begin{enumerate}
\item $\F$ is eventually contracting with respect to some admissible multimetric $\mathcal D$ on $X$;
\item $\F$ is Matkowski contracting with respect to some admissible multimetric $\mathcal D$ on $X$;
\item $\F$ is Krasnoselski\u\i{} contracting with respect to some admissible multimetric $\mathcal D$ on $X$;
\item for every open set $W\subset X$ and a point $x\in W$ there is a sequence of $(\U_n)_{n\in\IZ}$ of open covers of $X$ such that:
\begin{itemize}
\item[(a)] $\St(x,\U_n)\subset W$ for some $n\in\IZ$;
\item[(b)] any points $x,y\in X$ are contained in some set $U\in\bigcup_{n\in\IZ}{\U_n}$;
\item[(c)] for every $n\in\Z$, $St(\U_{n})\prec \U_{n+1}$;
\item[(d)] for every $m>n$, there is $k\in\Z$ such that $\{f(U):f\in\F^{\geq k},\;U\in \U_m\}\prec \U_n$.
\end{itemize}
\item for every open set $W\subset X$ and a point $x\in W$ there is a sequence of open neighborhoods of the diagonal $(V_n)_{n\in\IZ}$ of $X\times X$ such that:
\begin{itemize}
\item[(a)] $\{y\in X:(x,y)\in V_n\}\subset W$ for some $n\in\IZ$;
\item[(b)] $X\times X=\bigcup_{n\in\IZ}V_n$;
\item[(c)] for every $n\in \Z$, $V_{n}\circ V_{n}\subset V_{n+1}$;
\item[(d)] for every $m>n$, there is $k\in\N$ such that $\{(f(x),f(y)):(x,y)\in V_m,\;f\in\F^{\geq k}\}\subset V_n$.
\end{itemize}
\end{enumerate}
\end{theorem}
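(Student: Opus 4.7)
My plan is to reduce the equivalence $(1)\Leftrightarrow(2)\Leftrightarrow(3)$ to a single application of Proposition~\ref{p9.1}, then handle the equivalence $(1)\Leftrightarrow(4)\Leftrightarrow(5)$ by translating back and forth between admissible multimetrics and diagonal neighborhoods. The implications $(3)\Ra(2)\Ra(1)$ are direct from Proposition~\ref{p4.2}(3,4). For $(1)\Ra(3)$, I note that $\F$ is compact-dominating by hypothesis and eventually contracting by (1), so Theorem~\ref{t4.5} forces $\F$ to be locally contracting. For every $d\in\mathcal D$ in the multimetric witnessing (1), Proposition~\ref{p9.1} produces $\hat d\ge d$ which is continuous (by (4) of the proposition, using local contractivity) and for which $\F$ is Krasnoselski\u\i{} contracting (by (5)). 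Since $\hat d\ge d$, the family $\hat{\mathcal D}=\{\hat d:d\in\mathcal D\}$ generates a topology at least as fine as that of $X$, while continuity of each $\hat d$ makes it coarser; hence $\hat{\mathcal D}$ is admissible.

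The implication $(4)\Leftrightarrow(5)$ is a standard translation. Given $(4)$, the entourages $V_n=\bigcup\{U\times U:U\in\U_n\}$ are open symmetric neighborhoods of the diagonal, and the star-refinement $\St(\U_n)\prec\U_{n+1}$ gives $V_n\circ V_n\subset V_{n+1}$ by observing that two sets $U,U'\in\U_n$ sharing a common point lie in a single element of $\U_{n+1}$. Conversely, given $(5)$ with $V_n$ WLOG symmetric (replace $V_n$ by $V_n\cap V_n^{-1}$; all four conditions are preserved), the covers $\U_n=\{V_n(y):y\in X\}$ satisfy (4) up to the reindexing $n\mapsto 2n$ (the relation $V_n\circ V_n\subset V_{n+1}$ yields only $\St(\U_n)\prec\U_{n+2}$, so a factor-$2$ rescaling restores star-refinement). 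For $(1)\Ra(5)$, fix $W\ni x$ open, choose a finite $\mathcal E\subset\mathcal D$ and $\e_0>0$ with $B_{\mathcal E}(x,\e_0)\subset W$, set $\bar d=\max\mathcal E$, and define $V_n=\{(y,z):\bar d(y,z)<2^n\e_0\}$ for $n\in\Z$. Conditions (a), (b), (c) are immediate from the definition and the triangle inequality, and (d) reduces to the estimate $\bar d\w_{\F^{\ge k}}(2^m\e_0)<2^n\e_0$ for some $k$, which follows from eventual contractivity via the obvious bound $\bar d\w_{\F^k}\le\max_{d\in\mathcal E}d\w_{\F^k}\to 0$.

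The main obstacle is the reverse implication $(5)\Ra(1)$, where one has to assemble an admissible multimetric out of the pointwise data. For each pair $(W,x)$ with $x$ in an open set $W\subset X$, take the sequence $(V^{W,x}_n)_{n\in\Z}$ supplied by (5) and symmetrize. Iterating (5)(c) gives $V^{W,x}_n\circ V^{W,x}_n\circ V^{W,x}_n\subset V^{W,x}_{n+2}$, so the decreasing sequence $(V^{W,x}_{-2n})_{n\in\w}$ satisfies the triple-composition hypothesis of Frink's metrization lemma. That lemma produces a pseudometric $d^{W,x}$ on $X$ with $V^{W,x}_k\subset\{d^{W,x}<c\cdot 2^{k/2}\}\subset V^{W,x}_{k+2}$ for a universal constant $c$, and continuity of $d^{W,x}$ follows from openness of the $V^{W,x}_n$ together with the triangle inequality $|d^{W,x}(x,y)-d^{W,x}(x_0,y_0)|\le d^{W,x}(x,x_0)+d^{W,x}(y,y_0)$. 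The family $\mathcal D=\{d^{W,x}\}$ is an admissible multimetric: condition (5)(a) gives some $\e>0$ with $B_{d^{W,x}}(x,\e)\subset V^{W,x}_n(x)\subset W$. Eventual contractivity of $\F$ with respect to $d^{W,x}$ follows from (5)(d): given $t>0$, pick $m$ with $\{d^{W,x}\le t\}\subset V^{W,x}_m$; for any target $r>0$, pick $n$ with $c\cdot 2^{n/2}<r$; then (d) supplies $k$ such that $\F^{\ge k}$ sends $V^{W,x}_m$ into $V^{W,x}_n$, whence $d^{W,x}\w_{\F^{\ge k}}(t)<r$. The delicate part is the Frink construction and keeping track of the reindexing constants, but no ideas beyond standard pseudometrization of uniformities with a countable base are required.
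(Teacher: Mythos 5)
Your proposal follows essentially the same route as the paper: $(3)\Ra(2)\Ra(1)$ are immediate, $(1)\Ra(3)$ is obtained from Proposition~\ref{p9.1}(4),(5) (after using Theorem~\ref{t4.5} to get local/compact contractivity) exactly as in the paper's Theorem~\ref{t6.12n}, the passage from covers to entourages uses the same sets $V_n=\bigcup_{U\in\U_n}U\times U$, the implication from (1) to the cover/entourage conditions uses balls of geometrically growing radii (the paper proves $(1)\Ra(4)$ with $\U_n=\{B_d(x,3^n):x\in X\}$, you prove $(1)\Ra(5)$ with $V_n=\{\bar d<2^n\e_0\}$ --- an equivalent rearrangement of the implication cycle), and $(5)\Ra(1)$ rests on the same chaining metrization lemma (Frink / Engelking 8.1.10). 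Your additional direction $(5)\Ra(4)$ (symmetrize, take $\U_n=\{V_n(y):y\in X\}$, and double the index to recover star-refinement) checks out, though it is redundant once the cycle is closed.

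One step of $(5)\Ra(1)$ is stated incorrectly and needs repair. Applying Frink's lemma only to the one-sided tail $(V_{-2n})_{n\in\w}$ produces a \emph{bounded} pseudometric with no large-scale control, and then your step ``given $t>0$, pick $m$ with $\{d\le t\}\subset V_m$'' fails for $t$ at or above the bound: condition (5)(b) only says $X\times X=\bigcup_n V_n$, not that $X\times X$ lies in some $V_m$. (Concretely, for $X=\IR$, $\F=\{x\mapsto x/2\}$, $V_n=\{|x-y|<2^n\}$, the truncated metric $\min\{|x-y|,1\}$ that such a one-sided construction yields does not make $\F$ eventually contracting.) The chain construction must be run over the whole $\Z$-indexed family --- as the paper does, after passing to $(V_{2n})_{n\in\Z}$ so that $V_n\circ V_n\circ V_n\subset V_{n+1}$ --- which gives the two-sided sandwich $\{d<2^n\}\subset V_n\subset\{d\le 2^n\}$ for every $n\in\Z$. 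Your stated sandwich and all of your subsequent verifications (admissibility from (5)(a), continuity from openness of the $V_n$, eventual contractivity from (5)(d)) are exactly what this two-sided version delivers, so the fix is routine, but as written the invocation of the lemma would not produce the pseudometric you use.
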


\begin{proof} We shall prove the implications $(3)\Ra(2)\Ra(1)\Ra(4)\Ra(5)\Ra(1)\Ra(3)$ among which $(3)\Ra(2)\Ra(1)$ are trivial.
\smallskip

$(1)\Ra(4)$ Assume that $\F$ is eventually contracting with respect to some admissible multimetric $\mathcal D$ on $X$.
Replacing $\mathcal D$ by the family $\bar{\mathcal D}=\{\max\mathcal E:\mathcal E\subset\mathcal D,\;|\mathcal E|<\infty\}$, we can assume that $\mathcal D$ is directed.

Given an open set $W\subset X$ and a point $x_0\in W$, we need to construct a sequence $(\U_n)_{n\in\IZ}$ of open covers of $X$ satisfying the conditions (a)--(d) of the statement (4).  Since the directed family of pseudometrics $\mathcal D$ generates the topology of $X$, there is a pseudometric $d\in\mathcal D$ such that the 1-ball $B_d(x_0,1)=\{y\in X:d(x_0,y)<1\}$ centered at {$x_0$} is contained in the open set $W$. Now for every $n\in\IZ$ consider the open cover  $\U_n=\{B_d(x,3^{n}):x\in X\}$ of $X$ and observe that the sequence $(\mathcal U_n)_{n\in\IZ}$ satisfies the conditions (a)--(c). To check the condition (d), take any integer numbers $n<m$. Taking into account that $\lim_{k\to\infty}d\w_{\F^k}(3^{m+1})=0$, we can find $k\in\IZ$ such that $d\w_{\F^{\ge k}}(3^{{m}+1})<3^n$. Then for every  $f\in\F^{\ge k}$ and $U\in\U_m$ we get
$$\diam_d(f(U))\le d\w_{\F^{\ge k}}(\diam_d(U))\le {d}\w_{\F^{\ge k}}(3^{{m}+1})<3^n,$$ which implies that $f(U)\subset B_d(x,3^n)\in\U_n$ for any point $x\in f(U)$.
\smallskip

$(4)\Ra(5)$ Given an open set $W\subset X$ and a point $x_0\in W$, apply the condition ({4}) to find a sequence of open covers $(\U_n)_{n\in\IZ}$ satisfying the condition $(a)$--$(d)$ of the item (4). Then the sequence $(V_n)_{n\in\IZ}$ of open neighborhoods $V_n=\bigcup_{U\in\U_n}U\times U$ satisfies the conditions (a)--(d) of the item (5).
\smallskip

$(5)\Ra(1)$ For each open set $W\subset X$ and a point $x_0\in W$ fix a sequence $(V_n)_{n\in\IZ}$ of open neighborhoods of the diagonal of $X\times X$ satisfying the conditions (a)--(d) of the item (5). Replacing $(V_n)_{n\in\IZ}$ by $(V_{2n})_{n\in\IZ}$ if necessary, we can assume that (c) holds in the stronger form: $V_n\circ V_n\circ V_n\subset V_{n+1}$. In this case we can repeat the argument of Theorem~8.1.10 of \cite{Eng} and construct a continuous pseudometric $d=d_{W,x_0}$ on $X$ such that
$$\{(x,y):d(x,y)<2^n\}\subset V_n\subset \{(x,y):d(x,y)\le 2^n\}.$$
We claim that $\lim_{n\to\infty}\max_{f\in\F^{\ge {n}}}{d}\w_f(t)\to 0$ for every $t\in[0,\infty)$. Given any $t>0$ and $\e>0$, find two integer numbers  $n<m$ such that $2^n<\e$ and $2^m>t$. The condition (d) yields a number $k\in\IZ$ such that  $\{(f(x),f(y)):(x,y)\in V_m,\;f\in\F^{\geq k}\}\subset V_n$. We claim that $\sup_{f{\in}\F^{\ge k}}d\w_f(t)<\e$.
Take any function $f\in\F^{\ge k}$ and any set $A\subset X$ of $d$-diameter $\diam_d(A)\le t$. Then
$$A\times A\subset\{(x,y)\in X\times X:d(x,y)\le t\}\subset\{(x,y)\in X\times X:d(x,y)<2^m\}\subset V_m$$ and by the choice of $k$,
$$\{(f(x),f(y)):(x,y)\in A\times A\}\subset \{(f(x),f(y)):(x,y)\in V_m\}\subset V_n\subset\{(x,y)\in X\times X:d(x,y)\le 2^n\},$$
which implies that $\diam_d(f(A))\le 2^n<\e$. Hence $d\w_{\F^{\ge k}}(t)<\e$ and $\lim_{n\to\infty} d\w_{\F^n}(t)=0$.

Now we see that $\mathcal D=\{d_{W,x_0}:x_0\in W\in\tau_X\}$ is an admissible multimetric on $X$ and $\F$ is eventually contracting with respect to $\mathcal D$.
\smallskip

The implication $(1)\Ra(3)$ follows from Theorem~\ref{t6.12n} below.
\end{proof}

\begin{theorem}\label{t6.12n} For a cardinal $\kappa>0$ and a compact-dominating function system $\F$ on a Tychonoff space $X$ the following conditions are equivalent:
\begin{enumerate}
\item $\F$ is Krasnoselski\u\i{} contracting with respect to some (sequentially complete) admissible multimetric $\mathcal D$ on $X$ of cardinality $|\mathcal D|\le\kappa$;
    \item $\F$ is Matkowski contracting with respect to some (sequentially complete) admissible multimetric $\mathcal D$ on $X$ of cardinality $|\mathcal D|\le\kappa$;
\item $\F$ is eventually contracting with respect to some (sequentially complete) admissible multimetric $\mathcal D$ on $X$ of cardinality $|\mathcal D|\le\kappa$.
\end{enumerate}
\end{theorem}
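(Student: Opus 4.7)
The implications $(1)\Ra(2)\Ra(3)$ are immediate, since Proposition~\ref{p4.2} gives the hierarchy Krasnoselski\u\i{}$\,\Ra\,$Matkowski$\,\Ra\,$eventually contracting and these are conditions on the \emph{same} multimetric, so the cardinality bound and sequential completeness transfer for free. The entire content lies in $(3)\Ra(1)$.

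To prove $(3)\Ra(1)$, I would start from an admissible multimetric $\mathcal D$ with $|\mathcal D|\le\kappa$ witnessing eventual contractivity, and for each $d\in\mathcal D$ build a companion pseudometric $\hat d\ge d$ via the Barnsley--Igudesman--Miculescu formula of Proposition~\ref{p9.1}:
$$
\hat d(x,y)=\sup_{n\in\w}\max_{f\in\F^{n}}\alpha_n\, d(f(x),f(y)),
$$
with any strictly increasing $(\alpha_n)\subset(0,2]$ starting at $\alpha_0=1$. Combining the standing compact-dominacy hypothesis with eventual contractivity places us in the scope of Theorem~\ref{t4.5}, so $\F$ is topologically contracting, hence locally and compactly contracting. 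Feeding this back into Proposition~\ref{p9.1}, local contractivity yields continuity of $\hat d$ via part~(4), and eventual contractivity of $\F$ with respect to $d$ yields Krasnoselski\u\i{} contractivity with respect to $\hat d$ via part~(5). The family $\hat{\mathcal D}=\{\hat d:d\in\mathcal D\}$ then satisfies $|\hat{\mathcal D}|\le|\mathcal D|\le\kappa$; continuity of each $\hat d$ together with the inequality $\hat d\ge d$ pins the topology generated by $\hat{\mathcal D}$ between the topology generated by $\mathcal D$ and the topology of $X$, both of which coincide, so $\hat{\mathcal D}$ is admissible; and sequential completeness of $\mathcal D$ transfers to $\hat{\mathcal D}$, because $\hat d\ge d$ forces every $\hat{\mathcal D}$-Cauchy sequence to be $\mathcal D$-Cauchy.

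The main technical hurdle is verifying that $\hat d$ is actually \emph{finite-valued}: Proposition~\ref{p9.1}(1) only bounds $\hat d$ by $2\diam_d(X)$, which may be $+\infty$, since the pseudometrics $d\in\mathcal D$ need not be bounded. Here the compact-dominacy assumption does the essential work: for any pair $x,y\in X$ the compact set $\{x,y\}$ lies in some compact $K\subset X$ with $\F(K)\subset K$, hence $\F^{<\w}(\{x,y\})\subset K$, and therefore $\hat d(x,y)\le 2\diam_d(K)<\infty$ by continuity of $d$ restricted to the compact $K$. Once finiteness is secured, everything else is bookkeeping on top of Proposition~\ref{p9.1} and Theorem~\ref{t4.5}, with no further obstacle.
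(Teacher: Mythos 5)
Your proof is correct and follows essentially the same route as the paper's: the identical $\hat d$-construction of Proposition~\ref{p9.1}, with Theorem~\ref{t4.5} supplying local (and compact) contractivity so that parts (4) and (5) apply, and the same transfer of admissibility, cardinality and sequential completeness via $\hat d\ge d$. Your extra verification that each $\hat d$ is finite-valued (via compact-dominacy) is a correct elaboration of the ``well-defined'' step that the paper delegates to Proposition~\ref{p9.1}.
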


\begin{proof} The implications $(1)\Ra(2)\Ra(3)$ are trivial.

$(3)\Ra(1)$ Assume that $\F$ is eventually contracting with respect to some admissible (sequentially complete) multimetric $\mathcal D$ on $X$ with $|\mathcal D|\le\kappa$.
Fix an increasing sequence of real numbers $(\alpha_n)_{n\in\w}$ such that $1\leq \alpha_n\leq 2$ for all $n\in\w$. Proposition~\ref{p9.1}(5) guarantees that for every pseudometric $d\in\mathcal D$ the function system $\F$ is Krasnoselski\u\i{} contracting with respect to the pseudometric
$$
\hat{d}(x,y)=\sup_{n\in\w}\alpha_n\cdot\max_{f\in\F^{n}}d(f(x),f(y)),
$$
which is well-defined and continuous. The continuity of $\hat d$ follows from Proposition~\ref{p9.1}(4) and Theorem~\ref{t4.5}. The continuity of the pseudometrics $\hat d\ge d$, $d\in\mathcal D$, implies that  $\hat{\D}=\{\hat{d}:d\in\D\}$ is an admissible multimetric on $X$. So, $\F$ is Krasnoselski\u\i\  contracting with respect to the admissible multimetric $\hat {\mathcal D}$ having cardinality $|\hat{\mathcal D}|\le|\mathcal D|\le \kappa$. If the multimetric $\mathcal D$ is sequentially complete, then so is the multimetric $\hat{\mathcal D}$ (as each Cauchy sequence in $(X,\hat{\mathcal D})$ remains Cauchy with respect to the multimetric $\mathcal D$).
\end{proof}

\subsection{Banach metrization of topologically contracting function systems}

Finally, we shall discuss the ``Banach'' version of Problem~\ref{pr6.1}. We refer the reader to the paper \cite{K} for a profound consideration of this problem restricted to spaces which are attractors of topologically contracting systems.

The following example constructed in \cite{BN} (see also \cite{NS}) indicates that this problem is not trivial even in the realm of compact metrizable spaces (cf. also examples from \cite{K}), and shows that Theorem~\ref{comp} cannot be strengthened by making $\F$  Banach contracting.

\begin{example}\label{ex11.1} There is a 1-dimensional Peano continuum $X$ (called ``shark teeth'') admitting a topologically contracting function system $\F$ which is Banach contracting for no admissible metric on $X$.
\end{example}

However we do not know if the function system $\F$ on the ``shark teeth'' from Example~\ref{ex11.1} is  Banach contracting for some admissible multimetric on $X$.

We will prove a result which states that the problem of a ``Banach'' remetrization of a function system $\F$ is equivalent to the problem of a ``Banach'' remetrization of some power $\F^m$, $m\in\N$, of $\F$. Note that our result is a particular version of \cite[Theorem 3]{J} (cf. also remetrization results from \cite{BV} and \cite{MM}), but obtained in a different way.

\begin{proposition} For a cardinal number $\kappa$ and a function system $\F$ on a Tychonoff space $X$ the following conditions are equivalent:
\begin{enumerate}
\item $\F$ is Banach contracting with respect to some admissible {(sequentially complete)} multimetric $\mathcal D$ on $X$ with  $|\mathcal D|\le\kappa$;
\item for some $m\in\IN$ the function system $\F^m$ is Banach contracting with respect to some admissible {(sequentially complete)} multimetric $\mathcal D$ on $X$ with $|\mathcal D|\le\kappa$.
\end{enumerate}
\end{proposition}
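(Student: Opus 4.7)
The implication (1)$\Ra$(2) is immediate: take $m=1$, keep the same multimetric $\D$, and note that $\F^1=\F$.

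The content is in (2)$\Ra$(1). Suppose $\F^m$ is Banach contracting with respect to an admissible multimetric $\D$ on $X$ with $|\D|\le\kappa$. For each $d\in\D$, let
$$\lambda_d=\sup_{0<t<\infty}d\w_{\F^m}(t)/t<1$$
and fix $\mu_d\in(0,1)$ with $\mu_d^m\ge\lambda_d$ (for instance $\mu_d=\lambda_d^{1/m}$). Define the continuous pseudometric
$$\hat d(x,y)=\max_{0\le k<m}\mu_d^{-k}\max_{g\in\F^k}d(g(x),g(y)),\qquad x,y\in X.$$
Since $\F^0=\{\id_X\}$, the $k=0$ term gives $\hat d\ge d$. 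Each summand is a continuous pseudometric (as $d$ is continuous and each $g\in\F^k$ is continuous), so $\hat d$ is a continuous pseudometric too; combined with $\hat d\ge d$, this shows that the multimetric $\hat{\D}=\{\hat d:d\in\D\}$ is admissible and $|\hat\D|\le|\D|\le\kappa$.

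The main step is the Banach contractivity of $\F$ with respect to $\hat d$. Fix $f\in\F$ and $x,y\in X$, and consider the definition:
$$\hat d(f(x),f(y))=\max_{0\le k<m}\mu_d^{-k}\max_{g\in\F^k}d(g\circ f(x),g\circ f(y)).$$
Note $g\circ f\in\F^{k+1}$. If $k+1<m$, then
$$\mu_d^{-k}d(g\circ f(x),g\circ f(y))=\mu_d\cdot \mu_d^{-(k+1)}d(g\circ f(x),g\circ f(y))\le\mu_d\cdot\hat d(x,y)$$
because the factor on the right already appears in the max defining $\hat d(x,y)$. If $k+1=m$, then $g\circ f\in\F^m$, so $d(g\circ f(x),g\circ f(y))\le\lambda_d\, d(x,y)\le\mu_d^m\, d(x,y)$, giving
$$\mu_d^{-(m-1)}d(g\circ f(x),g\circ f(y))\le\mu_d\, d(x,y)\le\mu_d\,\hat d(x,y).$$
Thus $\hat d(f(x),f(y))\le\mu_d\,\hat d(x,y)$ for all $f\in\F$, which yields $\hat d\w_\F(t)\le\mu_d t$ and hence Banach contractivity with constant $\mu_d<1$.

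Finally, if $\D$ is sequentially complete then so is $\hat\D$: the inequality $d\le\hat d$ ensures that every $\hat\D$-Cauchy sequence is $\D$-Cauchy and therefore converges in $X$. The main obstacle is simply identifying the right exponent--shift $\mu_d=\lambda_d^{1/m}$ and organising the case distinction $k+1<m$ versus $k+1=m$; no compact-dominacy or fixed point machinery is needed in this argument.
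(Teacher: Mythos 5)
Your proof is correct, but the (2)$\Rightarrow$(1) construction is genuinely different from the paper's. The paper sets $\hat d(x,y)=\sup_{n\in\w}\sup_{f\in\F^n}a^n d(f(x),f(y))$ with $a>1$ chosen so that $a^m\lambda<1$ --- an infinite supremum with geometric weights, in the spirit of the $\hat d$-construction of Proposition~\ref{p9.1} used throughout the paper --- and then must prove that this supremum is finite and continuous, which is done by writing $n=mq+r$ and using the contractivity of $\F^m$ to control the tail $(a^m\lambda)^q a^r$; the Banach contractivity of $\F$ then falls out by a simple index shift with constant $1/a$. You instead use the classical ``equivalent metric'' renorming: a \emph{finite} maximum $\hat d(x,y)=\max_{0\le k<m}\mu_d^{-k}\max_{g\in\F^k}d(g(x),g(y))$ with $\mu_d^m\ge\lambda_d$, so continuity and well-definedness are immediate (a finite max of continuous pseudometrics, $\F$ being finite), and the contraction estimate reduces to the clean case split $k+1<m$ (shift inside the max) versus $k+1=m$ (invoke the contractivity of $\F^m$ at the wrap-around), again giving $\hat d(f(x),f(y))\le\mu_d\,\hat d(x,y)$. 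Your route is more elementary in that it avoids any convergence or tail estimate, while the paper's route keeps the construction uniform with its other remetrization arguments; the admissibility, cardinality and sequential-completeness steps ($\hat d\ge d$, so $\hat\D$-Cauchy implies $\D$-Cauchy) are the same in both. One cosmetic point: your parenthetical choice $\mu_d=\lambda_d^{1/m}$ only lies in $(0,1)$ when $\lambda_d>0$, but your actual requirement ``fix $\mu_d\in(0,1)$ with $\mu_d^m\ge\lambda_d$'' is always satisfiable, so nothing is lost.
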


\begin{proof} The implication $(1)\Ra(2)$ is trivial. To prove that $(2)\Ra(1)$, assume that for some $m\in\IN$ the function system $\F^m$ is Banach contracting for some admissible multimetric $\mathcal D$ of cardinality $|\mathcal D|\le\kappa$. Then for every pseudometric $d\in\mathcal D$ there is a real number $\lambda<1$ such that $d\w_{\F^m}(t)\le\lambda t$ for all $t\in[0,\infty)$. Choose a real number $a>1$ such that $a^m\lambda<1$ and consider the pseudometric $\hat d$ on $X$ defined by
$$\hat d(x,y)=\sup_{n\in\w}\sup_{f\in\F^n}a^nd(f(x),f(y))\mbox{ for $x,y\in X$}.$$

To show that the pseudometric $\hat d$ is continuous, it suffices for every $x\in X$ and $\e>0$ to find a neighborhood $O_x\subset B_{\hat d}(x,2\e)$ of $x$. Consider the function system $\F^{<m}=\bigcup_{n<m}\F^n$. By the continuity of the maps $h\in\F^{<m}$, there is a neighborhood $O_x\subset X$ of $x$ such that $h(O_x)\subset B_d(h(x),\e/a^m)$ for every $h\in\F^{<m}$.

We claim that $O_x\subset B_{\hat d}(x,\e)$. Fix any $y\in O_x$, $n\in\w$ and $f\in\F^{n}$.
Write the number $n$ as $n=mq+r$ where $q\in\w$ and $0\le r<m$. Then $f=g\circ h$ for some functions $g\in(\F^m)^q$ and $h\in\F^{<m}$. Consequently,
$$
\begin{aligned}
a^n d(f(x),f(y))&=a^{mq+r}d(g\circ h(x),g\circ h(y))\le a^{mq+r}d\w_{\F^m}^q(d(h(x),h(y))\le\\
&\le a^{mq+r}\lambda^qd(h(x),h(y))<(a^m\lambda)^qa^r\e/a^m\le\e/a
\end{aligned}
$$and hence
$$\hat d(x,y)=\sup_{n\in\w}\sup_{f\in\F^n}a^n d(f(x),f(y))\le \frac{\e}a<\e,$$
so $O_x\subset B_{\hat d}(x,\e)$. Therefore the pseudometric $\hat d$ is continuous.

It follows from $\hat d\ge d$ for $d\in\mathcal D$ that $\hat{\mathcal D}=\{\hat d:d\in\mathcal D\}$ is an admissible multimetric on $X$, and if $\D$ is sequentially complete, then so is $\hat{\D}$.

Finally we show that each map $f\in\F$ is Banach contracting with respect to each pseudometric $\hat d\in\hat{\mathcal D}$. This follows from the upper bound
$$\hat d(f(x),f(y))=\sup_{n\in\w}\sup_{g\in\F^n}a^nd(g(f(x)),g(f(y))=\frac1a \sup_{n\in\w}\sup_{g\in\F^n}a^{n+1}d(g\circ f(x),g\circ f(y))\le \frac1a\,\hat d(x,y)$$
holding for any points $x,y\in X$.
\end{proof}

The following proposition yields a partial answer to the ``Banach'' version of Problem~\ref{pr6.1}.

\begin{proposition} Let $\F$ be a topologically contracting function system on a compact metrizable space $X$. If the attractor $A_\F$ of $\F$ is finite, then $\F$ is Banach  contracting with respect to some admissible metric $d$ on $X$.
\end{proposition}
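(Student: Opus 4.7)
The strategy is to reduce to a suitable power $\F^N$ of $\F$ via the preceding proposition, extract a structural decomposition from the finiteness of the attractor, and then assemble a Banach metric by combining individual Meyers-type remetrizations.

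First, by the preceding proposition, it suffices to produce an integer $N\in\N$ and an admissible metric on $X$ with respect to which $\F^N$ is Banach contracting.

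Second, I use the assumption that $A_\F=\{a_1,\dots,a_k\}$ is finite together with the fact that topological contractivity implies global contractivity on a compact space (Theorem~\ref{comp}). Fix any admissible metric $\rho$ on $X$ and choose pairwise disjoint open neighborhoods $U_1,\dots,U_k$ of $a_1,\dots,a_k$ whose closures are pairwise disjoint. Global contractivity gives $N\in\N$ such that every $f\in\F^N$ satisfies $f(X)\subset U_{\phi(f)}$ for a unique $\phi(f)\in\{1,\dots,k\}$. Since $f(A_\F)\subset A_\F$ sits inside the single neighborhood $U_{\phi(f)}$, necessarily $f(A_\F)=\{a_{\phi(f)}\}$; in particular $a_{\phi(f)}$ is a fixed point of $f$, and one checks that $\phi(f\circ g)=\phi(f)$ for all $f,g\in\F^N$, so any iterate of $\F^N$ starting with $f$ remains inside $U_{\phi(f)}$.

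Third, for each $f\in\F^N$ the point $a_{\phi(f)}$ is a globally attracting fixed point: the iterates $f^n\in\F^{Nn}$ have images shrinking uniformly to $\{a_{\phi(f)}\}$ by global contractivity. Meyers' classical remetrization theorem yields, for each $f\in\F^N$, an admissible metric $d_f$ on $X$ and a constant $\lambda_f<1$ with $d_f(f(x),f(y))\le\lambda_f\cdot d_f(x,y)$. To combine these into a single admissible metric making every element of $\F^N$ simultaneously Banach, I partition $\F^N=\bigsqcup_{i=1}^k\F^N_i$ with $\F^N_i=\phi^{-1}(\{i\})$, so that each $\F^N_i$ consists of finitely many maps with common fixed point $a_i$ sending $X$ into the single neighborhood $U_i$. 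A joint Meyers-type argument (applied to each finite family $\F^N_i$ with common attracting fixed point $a_i$) produces an admissible metric $\tilde d_i$ on $X$ making every element of $\F^N_i$ Banach. Finally I assemble $d=\sum_{i=1}^k c_i\tilde d_i$ with carefully chosen positive constants $c_i$, using the disjointness of the $U_i$ and the identity $\phi(f\circ g)=\phi(f)$ to control the cross-contributions $\tilde d_i(f(x),f(y))$ coming from maps $f\notin\F^N_i$. The resulting metric is admissible since each $\tilde d_i$ is, and Banach contractivity of $\F^N$ (hence of $\F$) follows.

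The main obstacle is the final combination step: each $\tilde d_i$ is tailored to $\F^N_i$ and need not be contracting, or even Lipschitz, for elements of $\F^N_j$ with $j\ne i$. The finiteness of $A_\F$ and the pairwise disjointness of the neighborhoods $U_i$ are indispensable here, because any $f\in\F^N_j$ with $j\ne i$ maps all of $X$ into $U_j$, a region uniformly separated from $U_i$, so the parasitic $\tilde d_i$-distances between images $f(x),f(y)$ are bounded by a fixed constant (the $\tilde d_i$-diameter of $U_j\cap X$), while the $\tilde d_i$-distances before applying $f$ are bounded below by a quantity proportional to $\rho$. Balancing the weights $c_i$ against these bounds yields a uniform contraction constant $\lambda<1$ valid for every $f\in\F^N$.
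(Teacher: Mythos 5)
Your reduction to a power $\F^N$ (via the preceding proposition with $\kappa=1$), and the structural step that each $f\in\F^N$ maps $X$ into a single small neighborhood $U_{\phi(f)}$ and fixes the corresponding attractor point, are correct and compatible with the hypotheses (global contractivity on a compact space comes from Theorem~\ref{comp}). But from that point on the argument has two genuine gaps. First, the ``joint Meyers-type argument'' for a finite family $\F^N_i$ with a common attracting fixed point is not a standard citable theorem: a family whose arbitrary compositions have images shrinking to a single point is exactly a topologically contracting system with a one-point attractor, i.e.\ the case $|A_\F|=1$ of the very proposition you are proving. Meyers' theorem remetrizes one map at a time; making finitely many maps \emph{simultaneously} Banach contracting is the whole difficulty (topological contractivity only gives that $\diam(g(X))\to0$ along compositions, with no geometric rate, and a sup-type metric $\sup_n\beta^{-n}\rho(g(x),g(y))$ need not even be finite). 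So this step is asserted, not proved, and it is the heart of the matter.

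Second, the final combination $d=\sum_i c_i\tilde d_i$ does not work as described. For $f\in\F^N_j$ and $i\ne j$ you control $\tilde d_i(f(x),f(y))$ only by the constant $\diam_{\tilde d_i}(U_j)$; a Banach estimate $d(f(x),f(y))\le\lambda\, d(x,y)$ must hold for \emph{all} pairs, in particular for $x,y$ with $d(x,y)$ arbitrarily small, and there a fixed additive bound on the image distance (or a lower bound of $\tilde d_i(x,y)$ ``proportional to $\rho$'', which in any case is not guaranteed for merely admissible metrics) gives no control of the ratio. Without a Lipschitz-type estimate of the maps of $\F^N_j$ with respect to $\tilde d_i$, no choice of weights $c_i$ rescues this. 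For comparison, the paper avoids both problems by a single construction: it embeds $X$ into a space obtained by attaching cones over disjoint closed neighborhoods of the attractor points, choosing the embedding so that $\diam_d\big(O_x\cap\F^m(X)\big)\le\alpha^m$ for some $\alpha<\beta<1$; this forced geometric decay makes the one formula $\hat d(x,y)=\sup_{n}\sup_{f\in\F^n}\beta^{-n}d(f(x),f(y))$ a finite, admissible metric for which every $f\in\F$ is a $\beta$-contraction, with no appeal to Meyers' theorem, no power reduction, and no gluing of separately built metrics. If you want to keep your outline, the missing content is precisely such a quantitative remetrization forcing geometric decay of $\diam(\F^m(X))$ near each attractor point.
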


\begin{proof} Choose a family $\{\bar O_x\}_{x\in A_\F}$ of pairwise disjoint closed neighborhoods of the points $x\in A_\F$ in the space $X$ and consider the quotient space $Y$ of the space $(X\times\{0\})\cup\bigcup_{x\in A_\F}\bar O_x\times[0,1]$ by the equivalence relation whose non-trivial equivalence classes are the sets $\bar O_x\times\{1\}$ with $x\in A_\F$. Therefore, $Y$ is the union of $X$ and the cones over the sets $\bar O_{x}$, $x\in A_\F$. Let $q:(X\times\{0\})\cup\bigcup_{x\in A_\F}{ \bar O_x}\times[0,1]\to Y$ be the quotient map.

Fix any admissible metric $\rho$ on the compact metrizable space $Y$. Choose any positive numbers $\alpha<\beta<1$ and for every $n\in\w$ find a real number $\hbar_n\in(0,1)$ such that $\diam_\rho q(\bar O_x\times[\hbar_n,1])<\alpha^{n}$ for all $n\in\w$. The topological contractivity of $\F$ guarantees that the sequence $(\F^n(X))_{n\in\w}$ converges to the attractor $A_\F$ in the Vietoris topology on the hyperspace $\K(X)$. Consequently, for some $n_0\in\w$ the set $\F^{n_0}(X)$ is contained in $\bigcup_{x\in A_\F}O_x$, where $O_x$ is the interior of $\bar O_x$ in $X$ for each $x\in A_\F$.

It is easy to construct a continuous function $h:X\to[0,1]$ such that $h^{-1}(1)=A_\F$, $h(X\setminus\bigcup_{x\in A_\F}O_x)\subset\{0\}$ and $h(\F^m(X)\setminus A_\F)\subset [\hbar_m,1)$ for all $m\ge n_0$. {Indeed, for fixed $x\in A_\F$ take a decreasing family of open neighborhoods $(U^x_n)$ such that $cl_X(U^x_1)\subset O_x$, $\cl_X(U^x_{n+1})\subset U^x_n$ and $\{x\}=\bigcap_{n\in\N}U^x_n$. Then let $(k_n)$ be an increasing sequence of positive integers such that $\{f(X):f\in\F^{k_n}\}\prec\{U^x_n:x\in A_\F\}$ and $k_1\geq n_0$ (we can take such sequence because we can assume that (by Theorem \ref{comp}) each $f\in\F$ is Matkowski contracting, and the diameter of $X$ is finite). Finally, for every $n\in\N$ let $g_n:X\to[0, \hbar_{k_{n+2}}-\hbar_{k_{n+1}}]$ be continuous and such that $g_n|(X\setminus (\bigcup_{x\in A_\F}U^x_{n}))=0$ and $g_n|(\bigcup_{x\in A_\F}cl_X(U^x_{n+1}))=\hbar_{k_{n+2}}-\hbar_{k_{n+1}}$. Also $g_0:X\to[0,\hbar_2]$ let be continuous and such that $g_0|(X\setminus (\bigcup_{x\in A_\F}O_x))=0$ and $g_0|\bigcup_{x\in A_\F}cl_X(U^x_1)=\hbar_2$. Then $h=\sum_{n\in\N}g_n$ satisfies our needs.}

The function $h$ induces the embedding $e:X\to Y$, $e:x\mapsto q(x,h(x))$ {($e$ is an embedding being a continuous injective map on a compact space)}, such that $\diam_\rho(e(O_x\cap\F^m(X))\le\diam_\rho q(\bar O_x\times[\hbar_m,1])\le \alpha^m$ for all $m\ge {n_0}$. Consider the metric $d$ on $X$ defined by $d(x,y)=\rho(e(x),e(y))$ and observe that $\diam_d(O_x\cap \F^m(X))\le\alpha^m$ for all $m\ge {n_0}$. Then the formula
$$\hat d(x,y)=\sup_{n\in\w}\sup_{f\in\F^n}\beta^{-{n}}d(f(x),f(y)),\;\;x,y\in X$$determines an admissible metric $\hat d\ge d$ on $X$ such that $\hat d(f(x),f(y))\le\beta \hat d(x,y)$ for any function $f\in\F$ and points $x,y\in X$. This means that the function system $\F$ is Banach contracting with respect to the metric $\hat d$.
\end{proof}

\begin{problem} Let $\F$ be a topologically contracting function system on a compact metrizable space $X$. Assume that $\F|A_\F$ is Banach  contracting with respect to some admissible metric on the attractor $A_\F$. Is there an admissible metric on $X$ making the function system $\F$ Banach  contracting?
\end{problem}

\end{document}